\theoremstyle{plain}
\newtheorem{theorem}{Theorem}[section]
\newtheorem{corollary}[theorem]{Corollary}
\newtheorem{lemma}[theorem]{Lemma}
\newtheorem{proposition}[theorem]{Proposition}
\theoremstyle{definition}
\newtheorem{remark}[theorem]{Remark}
\newtheorem*{ack}{Acknowledgments}
\def\e{\varepsilon}
\def\rr{{\mathbb R}}
\def\NN{{\mathbb N}}
\def\ZZ{{\mathbb Z}}
\def\dist{{\rm dist}}
\def\supp{{\rm supp}}
\def\loc{{\rm loc}}
\def\Xint#1{\mathchoice 
  {\XXint\displaystyle\textstyle{#1}}%
  {\XXint\textstyle\scriptstyle{#1}}%
  {\XXint\scriptstyle\scriptscriptstyle{#1}}%
  {\XXint\scriptscriptstyle\scriptscriptstyle{#1}}%
  \!\int} 
\def\XXint#1#2#3{{\setbox0=\hbox{$#1{#2#3}{\int}$} 
  \vcenter{\hbox{$#2#3$}}\kern-.5\wd0}} 
\def\mint{\Xint -}
\newcommand{\minitab}[2][l]{\begin{tabular}#1 #2\end{tabular}}
\numberwithin{equation}{section}
\title{Variational analysis of nonlocal Dirichlet problems in periodically perforated domains }
\author{
{\sc Roberto Alicandro}
\\ \small Dipartimento di Matematica ``R. Caccioppoli'',
 Universit\`a di Napoli Federico II,\\
 \small
via Cintia, 80126 Napoli, Italy\\
\\
{\sc Maria Stella Gelli}
\\ \small Dipartimento di Matematica, Universit\`a di Pisa,\\
\small Largo Bruno Pontecorvo, 56127 Pisa, Italy\\
\\
{\sc Chiara Leone}
\\ \small Dipartimento di Matematica ``R. Caccioppoli'',
 Universit\`a di Napoli Federico II,\\
 \small
via Cintia, 80126 Napoli, Italy\\
}
\date{
}                                      
\begin{document}

\maketitle

\noindent
{\bf Abstract.}
In this paper we consider a family of non local functionals of convolution-type depending on a small parameter $\e>0$ and $\Gamma$-converging to local functionals defined on Sobolev spaces as $\e\to 0$. We study the asymptotic behaviour of the functionals when the order parameter is subject to Dirichlet conditions on a periodically perforated domains, given by a periodic array  of small balls of radius $r_\delta$  centered on a $\delta$--periodic lattice, being $\delta > 0$ an additional small parameter and $r_\delta=o(\delta)$. We highlight differences and analogies with the local case, according to the interplay between the three scales $\e$, $\delta$ and $r_\delta$. A fundamental tool in our analysis turns out to be a non local variant of the classical   Gagliardo-Nirenberg-Sobolev inequality in Sobolev spaces which may be of independent interest and useful for other applications.

\bigskip

\noindent {\bf Keywords.} Convolution functionals, non local energies,   homogenization, periodic perforated domains.

\smallskip
\noindent
{\bf AMS Classifications.} 49J45, 49J55, 74Q05, 35B27, 35B40, 45E10

\section{Introduction}
In the last decades there has been an increasing interest towards the analysis of variational models involving non local functionals of the form
\begin{equation}\label{doubleint}
\int_\Omega\int_\Omega f(x, y, u(y)-u(x))\, dx\, dy
\end{equation}
where $\Omega$ is an open set of $\rr^d$, in view of their relevance for applications in different directions, such as image processing \cite{brengu, GO}, population dynamics \cite{FKK}, continuum mechanics through the theory of perydinamics \cite{BMCP, MD, Sil} and phase transition problems \cite{albel,SV}.

\noindent The relation between non local functionals as in \eqref{doubleint} when the energies concentrate on the diagonal $x=y$  and local functionals of the form
\begin{equation}\label{locfunct}
\int_\Omega f(x, \nabla u(x))\, dx
\end{equation}
has been first investigated by  Bourgain, Brezis and Mironescu in their seminal paper \cite{boubremir}, where they study the asymptotic behaviour of Gagliardo seminorms $\displaystyle [u]_{W^{1-\e,p}(\Omega)}$ as $\e\to 0$, and in particular, in the case $p=2$, show  that
$$
\e[u]_{W^{1-\e,2}(\Omega)}=\e\int_\Omega\int_\Omega \frac{\ |u(y)-u(x)|^2\  }{\ |y-x|^{d+2(1-\e)}}\, dx\, dy 
$$ 
approximate as $\e\to 0$ the square of the $L^2$ norm of $\nabla u$, up to a multiplicative constant. The result has been subsequently extended in \cite{pon} in terms of $\Gamma$-convergence. A general asymptotic analysis  as $\e\to 0$ of families of functionals of the form
\begin{equation}\label{doubleinteps}
\int_\Omega\int_\Omega f_\e(x, y, u(y)-u(x))\, dx\, dy,
\end{equation}
under superlinear growth assumptions in the last variable and concentration of the energies on $x=y$, has been recently provided in \cite{AABPT}, by using De Giorgi localization methods for $\Gamma$-convergence,  leading to a general class of energies whose $\Gamma$-limits are of the form \eqref{locfunct},  with a number of applications, in particular to stochastic homogenization, to energies on point clouds and to gradient flows, which are just some of the potential directions of the theory.

Purpose of this paper is to investigate the asymptotic behaviour of energies as in \eqref{doubleinteps} when the order parameter $u$ is subject to pinning conditions, highlighting differences and analogies with the corresponding local case. Pinning sites are usually modelled as small zones where Dirichlet conditions are imposed. Here we consider the simplest case (but already presenting most of the main features) of periodically perforated domains where homogeneous Dirichlet conditions are imposed on a periodic array $P_\delta$ of small balls of radius $r_\delta$ centered on a $\delta$-periodic lattice, being $\delta>0$ an additional small parameter and $r_\delta=o(\delta)$. In the local case there is a wide literature devoted to the study of minimum problems involving energies as in \eqref{locfunct} subject to this type of constraints  and comprising a number of generalizations which  cover also  general non-periodic geometries. In particular, we refer to the celebrated paper by Cioranescu and Murat \cite{CM}, where the authors study the asymptotic behaviour, as $\delta\to 0$,  of solutions to the minima of the Dirichlet energy subject to the constraint above, and the paper \cite{AB}, where the problem is set in the framework of $\Gamma$-convergence and extended to general vector energies. The asymptotic description of the problems becomes non trivial when the growth of $f$ in the gradient variable in \eqref{locfunct}  is of order $p\leq d$ and leads to a critical size of the radii of the perforations, namely $r_\delta= O(\delta^{d/(d-p)})$, if $p<d$, and 
$\log(r_\delta)= O( \delta^{d/(d-1)})$, if $p=d$. Under this scaling the energetic contribution near each of the small balls can be decoupled from the others and from the diffused energy elsewhere and can be computed by means of a capacitary formula. For instance, in the model case $f(x,z)=|z|^p$ and given a forcing term $g\in L^{p'}(\Omega)$, one obtains that minimum problems
$$
\min\left\{\int_\Omega (|\nabla u|^p- gu)\, dx:\ u=0\ \hbox{on }\ P_\delta\right\}
$$
are approximated as $\delta\to 0$ by 
$$
\min\left\{\int_\Omega ( |\nabla u|^p\, dx +C_p|u|^p-gu)\, dx \right\},
$$
where $C_p$ is the $p$-capacity of the ball $B_1$ in $\rr^d$ (see Section \ref{preli}) 
and the middle term accounts for the energetic contribution near the perforations.

In this paper we focus on the case $p<d$ and consider energies defined on vector-valued functions $u\in L^p(\Omega;\rr^m)$ and of the form
$$
E_\e(u)=\frac{1}{\e^d}\int_\Omega\int_\Omega f\left(\frac{y-x}{\e}, \frac{u(y)-u(x)}{\e}\right)\, dx\, dy
$$
where, for any $\xi\in\rr^d$, $f(\xi,\cdot)$ is $p$-homogeneous, locally Lipschitz continuous and  satisfies suitable decay assumptions as $|\xi|\to +\infty$, ensuring that interactions between points $x$ and $y$ at a long range distance are negligible as $\e\to 0$ (see hypotheses ({\bf H}), ({\bf G}) and ({\bf L}) below). Under this assumptions, as a particular case of the asymptotic analysis provided in \cite{AABPT}, the $\Gamma$-limit of $E_\e$ is given by the local functional
$$
E_0(u)=\int_\Omega f_{hom}(\nabla u)\, dx,
$$
where $f_{hom}(S)$ is defined by a suitable homogenization formula (see \eqref{homform}). We then impose the admissible functions $u$ to satisfy the constraint $u=0$ on $P_\delta$ and study the asymptotic behaviour of $E_\e$ as $\e$ and $\delta$ go to $0$ according to the interplay between the three scales  $\e$, $\delta$ and $r_\delta$. A natural question is whether or not $E_\e$ and $E_0$ share the same asymptotic behaviour under the imposed constraint on the admissible functions.  We show that this is the case when $\e$ goes to $0$ faster then $r_\delta$. Indeed, the first main result of the paper is Theorem \ref{th:main}, where we show that, if $\e=o(r_\delta)$ and $r_\delta= O(\delta^{d/(d-p)})$, $E_\e$ and $E_0$, subject to the constraint $u=0$ on $P_\delta$,  share the same $\Gamma$-limit, which is given by
\begin{equation}\label{comeloc}
\int_\Omega \left(f_{hom}(\nabla u)+\varphi(u)\right)\, dx, 
\end{equation}
where $\varphi(z)$ is described by the capacitary formula induced by $E_0(u)$, see \eqref{denscapacitaria}. We point out that this kind of homogenization problems in the non local setting has been studied in \cite{PR}, where the authors treat Dirichlet and Neumann boundary conditions for a non local equation in the scalar case and when $p=2$. 

A major difference is when $\e$ scales like $r_\delta$, since in this case the limit functional keeps memories of the non locality of the approximating energies. Indeed,  the second main result of the paper is Theorem \ref{th:mainNL}, where we show that, if $\e=O(r_\delta)$ and $r_\delta= O(\delta^{d/(d-p)})$, the $\Gamma$-limit of $E_\e$, subject to the constraint $u=0$ on $P_\delta$,  is still of the form \eqref{comeloc}, but the density $\varphi(z)$  is now described by a non local capacitary formula, see \eqref{denscapacitariaNL}. If $r_\delta$ does not scale like $\delta^{d/(d-p)}$, in both case $\e=o(r_\delta)$ and $\e=O(r_\delta)$ the asymptotic behaviour of $E_\e$ is trivial and it is consistent with that  of local energies of Dirichlet type in periodically perforated domains (see  Remark \ref{altrescale}). On the contrary, in Theorem \ref{th:sloweps} we show that if $\e\to 0$ slower than $r_{\delta}$, then, for most of the choice of the scaling of $r_{\delta}$ with respect to $\delta$, $E_{\e}$ is not affected by the constraint $u=0$ on $P_\delta$ and thus the $\Gamma$-limit is still given by $E_0$.

From a technical viewpoint, in order to prove Theorem \ref{th:main} and Theorem \ref{th:mainNL} we mainly follow the strategy exploited in \cite{AB}. Nevertheless, the  non local nature of our approximating energies does not allow us to simply adapt that argument to our case. The main difficulty we have encountered in the proof of Theorem \ref{th:main} is to show the convergence of minimum problems on unbounded domains, defining the approximating capacitary densities, to the limit energy density $\varphi$, stated in Proposition \ref{capterm}. A crucial result that allowed us to overcome this difficulty is Theorem \ref{GNS}, which can be considered a non local variant of the  classical Gagliardo-Nirenberg-Sobolev inequality in Sobolev spaces and may be of independent interest and useful for other applications. Specifically,  we show that, fixed $r>0$,  the $L^{p^*}$-norm of suitable piecewise constant interpolations at scale $\e$ of any admissible function is uniformly bounded from above, up to a multiplicative constant, by the energy
$$
G_\e^{r,p}(u,\rr^d)=\int_{B_{r}}\int_\rr^d\left|\frac{u(x+\e\xi)-u(x)}{\e}\right|^p\, dx\, d\xi,
$$
which in turn is controlled by $E_\e(u)$ and plays the role of $\int_{\rr^d}|\nabla u|^p\, dx$ in the Gagliardo-Nirenberg-Sobolev inequality.

We conclude the introduction with some comments about future developments. In our model we refrain from maximal generality in order to emphasize the main features of the asymptotic process for our non local functionals under pinning conditions, but it would be worth extending our analysis to more general integrands. In particular in the critical regime, if one removes the $p$\,-\,homogeneity assumption on $f$, then, assuming the existence of 
$$
\lim_{\delta\to 0} \delta^{dp/(d-p)} f(\xi, \delta^{-d/(d-p)}z)=: f_{\infty,p}(\xi,z)
$$
 the limit energy should be still of the form \eqref{comeloc}, with $f$ replaced by $f_{\infty,p}$ in the definition of the capacitary density $\varphi$. A natural follow-up of our results is also the extension to the case $p=d$ and to the critical regime $\log(r_\delta)= O( \delta^{d/(d-1)})$. We point out that a $\Gamma$-convergence analysis for local functionals in this setting has been provided in \cite{Siga2}. Furthermore, we believe that some of the techniques developed in this paper can also be used to extend the analysis to the case of perforations whose centres are randomly distributed  according to a stationary point process (see \cite{SZZ} for results in this direction in the local case).
We finally point out that another class of non local functionals, namely discrete functionals of the form
$$
\frac{1}{\e^{p+d}}\sum_{i,j\in\mathcal{L}} f(i,j, u_j-u_i),
$$
where $\mathcal{L}$ is a $d$-dimensional lattice, $u:\e\mathcal{L}\to\rr^m$ and $u_i=u(\e i)$, have been widely investigated as a discrete approximation of integral functionals of $p$-growth (see e.g. \cite{alicic}). 
In this setting, an asymptotic analysis similar to the one provided here when $\e=O(r_\delta)$ has been carried on  in \cite{Siga}, where the main result  can be considered the discrete analog of  Theorem \ref{th:mainNL}. It would be interesting to extend that analysis also to the case $\e=o(r_\delta)$.

\section{Notation}\label{setting}

In what follows $d,m\in\NN$ will be fixed natural numbers denoting the dimension of the reference and target spaces of the functions we consider, respectively. Given $t\in\rr$,   $\lfloor t\rfloor$ denotes the integer part of $t$; for $x\in \rr^d$, $r>0$,  $B_r(x)$ (if $x=0$, simply $B_r$) stands for the open ball of centre $x$ and radius $r$, $Q_r(x)$ (if $x=0$, simply $Q_r$) stands for the square  $x+(-\frac{r}{2},\frac{r}{2})^d$. We denote by $S^{d-1}$ the unit sphere in $\rr^d$. If $A$ is a subset of $\rr^d$ then
dist$(x,A)=\inf \{|y-x|: y\in A\}$; 
$\mathcal{A}^{\rm reg}(A)$ is the subfamily of open subsets with Lipschitz boundary. By
 $A\subset \subset B$ we mean that the closure of $A$ is a compact subset of $B$. If $A\subset \subset B$, a {\em cut-off function} between $A$ and $B$ is a (smooth) function $\varphi$ with $0\le\varphi\le 1$, $\varphi=0$ on $\partial B$ and $\varphi=1$ on $A$. Given a real function $h(\cdot)$, we use the symbols $o(h), O(h)$ respectively, to denote  a generic function $g$ such that $\displaystyle\lim\limits_{t\to 0}\frac{g(t)}{h(t)}=0$, $\lim\limits_{t\to 0}\frac{g(t)}{h(t)}=\gamma \in (0, +\infty)$. {If $E$ is a measurable subset of $\rr^d$ we denote by $|E |$ its Lebesgue measure. We use standard notation for Lebesgue and Sobolev spaces. If $u$ is an integrable function on a measurable set $E\subset\rr^d$,
$$u_E:=\frac{1}{|E|}\int_E u(x)dx$$
denotes the average of $u$ on $E$. We use also standard notation for $\Gamma$-convergence \cite{bra2,dal}, indicating the topology with respect to which it is performed. Unless otherwise stated, the letter $C$ denotes a generic strictly positive constant. Relevant dependencies on parameters will be as usual emphasised by putting them in parentheses.

\section{ Setting of the problem and main results}

We fix a growth exponent $p\in(1,d)$ and we let $\Omega\subset\mathbb{R}^d$ be a bounded open set with Lipschitz boundary. Let $\delta, {r}_{\delta}$ be given with $\delta>r_\delta>0$ and set
\begin{equation}\label{perforazioni}
P_\delta:=\bigcup_{i\in\mathbb{Z}^d} B_{r_\delta} (\delta i),
\end{equation}
\begin{equation}\label{lpdelta}
L^p_\delta (\Omega;\mathbb{R}^m):=\{u\in L^p (\Omega;\mathbb{R}^m): u\equiv 0\ \hbox{on}\  P_\delta\cap\Omega\}.
\end{equation}

\noindent Given  $\e>0$ and $f:\mathbb{R}^d\times\mathbb{R}^m\to[0,+\infty)$ a positive Borel function, we introduce the non-local functionals $F_{\e,\delta}:L^p(\Omega;\mathbb{R}^m)\to[0,+\infty]$ defined as
\begin{equation}\label{functionals2}
F_{\e,\delta}(u):=\begin{cases}\displaystyle \int_{\mathbb{R}^d}\int_{\Omega_\e(\xi)} f\Bigl(\xi, D_\e^\xi u(x)\Bigr) dx \, d\xi & \text{if } u\in L^p_\delta (\Omega;\mathbb{R}^m),\cr\cr
+\infty & \text{otherwise},
\end{cases}
\end{equation}
where 
\begin{equation}\label{finitediff}
D_\e^\xi u(x):=\frac{u(x+\e\xi)-u(x)}{\e}
\end{equation}
and for every subset $A$ of $\rr^d$ we set
\begin{equation}\label{insiemiAxi}
A_\e(\xi):=\{x\in A\,|\,x+\e\xi\in A\}.
\end{equation}
We consider the following set of assumptions on the function $f$ above:
\begin{itemize}
\item[($\bf{H}$)]{\bf ($p$\,-\,homogeneity)} $ f(\xi,tz)=t^p f(\xi,z)$ for every $(\xi,z)\in \mathbb{R}^d\times\mathbb{R}^m$ and $t>0$;
\item[($\bf{G}$)] {\bf (growth)} the functions $\displaystyle m(\xi):=\inf_{z\in S^{d-1}} f(\xi,z)$ and $\displaystyle M(\xi):=\sup_{z\in S^{d-1}} f(\xi,z)$ satisfy:
\begin{enumerate}
\item[(G0)] there exist $\lambda_0, r_0>0$ such that $m(\xi)\geq \lambda_0$ if $|\xi|\leq r_0$;
\item[(G1)] $\displaystyle \int_{\mathbb{R}^d} M(\xi)(|\xi|^p+1)\, d\xi<+\infty$;
\end{enumerate}
\item[(\bf{L})]  {\bf ($p$\,-\,Lipschitz continuity)} there exists $C>0$  such that for every $\xi\in\rr^d$ \[ |f(\xi,w)-f(\xi,z)|\leq CM(\xi)(|z|^{p-1}+|w|^{p-1})|w-z| \quad \forall z,w \in\rr^m.  \]
\end{itemize}

\medskip

We also introduce the `truncated'  functionals defined for every $T>0$ as 
\begin{equation}\label{functionals2T}
F_{\e,\delta}^T(u):=\begin{cases}\displaystyle \int_{B_T}\int_{\Omega_\e(\xi)} f\Bigl(\xi, D_\e^\xi u(x)\Bigr) dx \, d\xi & \text{if } u\in L^p_\delta (\Omega;\mathbb{R}^m),\cr\cr
+\infty & \text{otherwise}.
\end{cases}
\end{equation}

Note that $F_{\e,\delta}^T$ is of the form \eqref{functionals2} with $f^T(\xi,z):=\chi_{B_T}(\xi)f(\xi,z)$ in place of $f(\xi,z)$.

\begin{remark}\label{remcrescita}
Note that assumption ({\bf H}) yields that $m(\xi) |z|^p\leq  f(\xi,z)\leq M(\xi) |z|^p$ for every $(\xi,z)\in \mathbb{R}^d\times\mathbb{R}^m$ . 
\end{remark}

\noindent Let us consider also the unconstrained family of functionals $\mathcal {F}_\e: L^p(\Omega;\mathbb{R}^m)\to[0,+\infty]$ defined by
\begin{equation}\label{uncofunctionals}
\mathcal {F}_\e(u):=\int_{\mathbb{R}^d}\int_{\Omega_\e(\xi)} f\Bigl(\xi,D_\e^\xi u(x)\Bigr) dx \, d\xi.
\end{equation}
We also introduce a localized version of such functionals by setting, for any open set $A\subset\rr^d$ and $u\in L^p(A;\rr^m)$,
\begin{equation}\label{uncofunctionalsloc}
\mathcal {F}_\e(u,A):=\int_{\mathbb{R}^d}\int_{A_\e(\xi)} f\Bigl(\xi,D_\e^\xi u(x)\Bigr) dx \, d\xi.
\end{equation}
Moreover the  {\it truncated} functionals are defined, for any $T>0$, as
\begin{equation}\label{uncofunctionalsT}
\mathcal {F}_\e^T(u,A):=\int_{B_T}\int_{A_\e(\xi)} f\Bigl(\xi,D_\e^\xi u(x)\Bigr) dx \, d\xi
\end{equation}
and we drop the dependence on the set  if $A=\Omega$, that is $\mathcal {F}_\e^T(u):=\mathcal {F}_\e^T(u,\Omega)$.

\noindent As a particular case of a more general result, in \cite{AABPT} it was proved the following $\Gamma$-convergence result (see \cite[Theorem 6.1]{AABPT}).

\begin{theorem}\label{convexhom}
Let $\mathcal {F}_\e$ be defined by \eqref{uncofunctionals}, with $f$ satisfying assumptions {\rm({\bf H})} and {\rm ({\bf G})}. Then
\begin{equation}\label{gammaconvex}
\Gamma(L^p)-\lim_{\e\to 0} \mathcal{F}_\e (u)=
\begin{cases}\displaystyle \int_\Omega f_{hom}(\nabla u)\, dx & \text{if}\ u\in W^{1,p}(\Omega;\mathbb{R}^m),\cr\
+\infty & \text{otherwise},
\end{cases}
\end{equation}
where, for $S\in  \mathbb{R}^{m\times d}$,
\begin{equation}\label{homform}
f_{hom}(S):=\lim_{R\to\infty}\frac{1}{R^d}\inf\Big\{\int_{Q_R}\int_{Q_R}f(y-x,v(y)-v(x))dx \, dy : v\in\mathcal{D}^{S}(Q_R)\Big\},
\end{equation}
with
$$
\mathcal{D}^{S}(Q_R) := \big\{u\in L^p(\rr^d;\mathbb{R}^m) : u(x)=Sx \text{ for a.e. } x\in\rr^d\,, \, \dist(x,\rr^d\backslash Q_R)<1 \big\}. 
$$

\end{theorem}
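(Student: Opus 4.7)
The plan is to adapt the standard $\Gamma$-convergence strategy for integral functionals of the form $\int f(x/\e,\nabla u)\,dx$ to the convolution setting, with assumption (G1) providing the control on long-range interactions that substitutes for locality. First I would verify that the limit defining $f_{hom}(S)$ in \eqref{homform} actually exists. Given $R_1<R_2$, one paves $Q_{R_2}$ by translated copies of $Q_{R_1}$ and pastes together nearly optimal competitors from each cube; cross-interactions between different tiles are controlled by $\int_{|\xi|\ge r}M(\xi)|\xi|^p\,d\xi$, which tends to $0$ as $r\to\infty$ by (G1). This yields an almost-subadditivity property sufficient to prove existence of the limit, plus convexity and a $p$-growth bound on $f_{hom}$.

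Next, a compactness property for sequences with $\sup_\e\mathcal{F}_\e(u_\e)<+\infty$ is needed. Assumption (G0) gives $f(\xi,z)\ge \lambda_0|z|^p$ on $|\xi|\le r_0$, so the translation differences $\|u_\e(\cdot+\e\xi)-u_\e(\cdot)\|_{L^p}$ are uniformly controlled for small $\xi$, and a Fr\'echet--Kolmogorov-type criterion yields $L^p$-compactness; a discrete difference-quotient argument then identifies every cluster point as a function in $W^{1,p}(\Omega;\rr^m)$.

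For the liminf inequality, I would localize $\mathcal{F}_\e$ to open subsets and pick a subsequence $u_\e\to u$ in $L^p$ realizing the liminf. A blow-up at a Lebesgue point $x_0$ of $\nabla u$, combined with the change of variables $y=(x-x_0)/\e$, turns the energy on $Q_\rho(x_0)$ into an energy on $Q_{\rho/\e}$ for a function close to $y\mapsto\nabla u(x_0)y$; the definition of $f_{hom}$ then gives $\liminf_{\e\to 0}\mathcal{F}_\e(u_\e,Q_\rho(x_0))\ge \rho^d f_{hom}(\nabla u(x_0))(1+o_\rho(1))$, and integration over $\Omega$ closes the inequality. For the limsup, I would first build a recovery sequence for affine $u(x)=Sx$: take a near-optimal $v_R\in\mathcal{D}^{S}(Q_R)$ in \eqref{homform}, rescale $u_\e^R(x)=\e v_R(x/\e)$, and tile $\Omega$ by copies of $\e Q_R$; the boundary-layer condition built into $\mathcal{D}^{S}$ makes adjacent tiles fit together, and a diagonal argument in $R\to\infty$ gives the sharp upper bound for affine $u$. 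Density of piecewise affine functions in $W^{1,p}$ and a gluing step, again using (G1) to control cross-terms across the patches, extends the construction to arbitrary $u\in W^{1,p}(\Omega;\rr^m)$.

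The main obstacle is precisely the non-local character of $\mathcal{F}_\e$: every piecewise construction couples arbitrarily distant portions of the domain through $f(\xi,\cdot)$. Assumption (G1) makes this tractable by allowing one to truncate at scale $|\xi|\le T$ (working with $\mathcal{F}_\e^T$) and show that the residual error is uniformly small as $T\to\infty$, so that the $\Gamma$-limits of $\mathcal{F}_\e^T$ and $\mathcal{F}_\e$ agree in the limit. Carrying this truncation uniformly through the subadditivity argument for $f_{hom}$, through the blow-up for the liminf, and through the gluing for the recovery sequence is, to my mind, the most delicate technical part of the whole proof.
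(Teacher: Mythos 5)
The paper does not actually prove Theorem \ref{convexhom}: it is quoted as a particular case of \cite[Theorem 6.1]{AABPT}, a monograph whose analysis is carried out, as the introduction of this paper notes, ``by using De Giorgi localization methods for $\Gamma$-convergence.'' So there is no in-paper proof to compare against; the most one can do is compare your sketch with the approach of the cited reference.

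Your plan is a legitimate and broadly correct direct route, but it is a genuinely different strategy from the one in \cite{AABPT}. You propose the classical homogenization pipeline: establish the cell limit \eqref{homform} via almost-subadditivity with (G1) controlling cross-tile interactions; get compactness from a Fr\'echet--Kolmogorov criterion driven by (G0); prove the lower bound by blow-up at a Lebesgue point of $\nabla u$ (after the change of variable $y=(x-x_0)/\e$ the energy density on $Q_{\rho/\e}$ matches the minimization in \eqref{homform}); and build the upper bound by rescaling and tiling near-optimal cell competitors, using the boundary layer condition in $\mathcal{D}^S(Q_R)$ to glue tiles and truncating at scale $T$ to tame the long-range tail. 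The approach of \cite{AABPT} is instead the abstract De Giorgi localization/integral-representation scheme: one localizes $\mathcal{F}_\e(\cdot,A)$ in the set variable, proves a fundamental estimate (their Proposition 4.2 and the extension result you see echoed in Corollary \ref{boundlemma-lip} play the role that your (G1)-controlled gluing plays), extracts a subsequence whose $\Gamma$-limit is an inner-regular increasing set function, represents it as $\int_A g(x,\nabla u)\,dx$ by the Buttazzo--Dal Maso machinery, and then identifies $g$ with $f_{hom}$ by testing with affine data and using the homogenization formula. Both routes must confront the same technical issue you highlight, namely that every gluing couples far-apart regions through $f(\xi,\cdot)$; the localization method packages this once and for all into the fundamental estimate, whereas your direct method has to re-manage the tail each time (in the subadditivity argument, in the blow-up, and in the recovery-sequence patching). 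What the direct approach buys is concreteness and a self-contained proof; what the localization approach buys is the ability to dispense with homogeneity in the spatial variable and to cover the much larger class of integrands treated in \cite{AABPT}, of which the present statement is a special case.

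One caution on your compactness step: a plain Fr\'echet--Kolmogorov argument on $\Omega$ is not quite enough near $\partial\Omega$, because \eqref{insiemiAxi} restricts the inner integral to $\Omega_\e(\xi)$ and a uniform translation estimate up to the boundary is needed. In \cite{AABPT} (and in Theorem \ref{kolcom} here) this is handled via an extension operator for $G_\e^{r,p}$ on Lipschitz domains; you should fold that ingredient explicitly into your compactness step, otherwise the argument only yields $L^p_{\mathrm{loc}}$ precompactness in the interior.
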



\begin{remark}\label{phomfhom}
Note that the $p$\,-\,homogeneity assumption ({\bf H}) is inherited by $f_{hom}$, that is
$$
f_{hom} (t S)= t^p f_{hom}(S)\ \text{for every}\ t>0\ \text{and}\ S\in\rr^{m\times d}.
$$
Moreover assumption ({\bf G}) easily yields that
$$
m_0 |S|^p\leq f_{hom}(S)\leq M_0|S|^p \ \text{for every}\  S\in\rr^{m\times d}
$$
for two suitable strictly positive constants $m_0, M_0$.
\end{remark}
\begin{remark}\label{convexity} 
Hypothesis ({\bf L}) trivially holds if we assume that $f$ satisfies ({\bf H}), ({\bf G}), and $f(\xi,\cdot)$ is convex for every  $\xi\in\mathbb{R}^d$. Moreover, under this convexity assumption, the asymptotic formula \eqref{homform} reduces to 
$$
f_{hom} (S)=\int_{\mathbb{R}^d} f(\xi, S\xi)\, d\xi
$$ 
(see \cite[Theorem 6.2]{AABPT}). 
\end{remark}

The main results of the paper are provided by the following two theorems.

\begin{theorem}[Local capacitary term]\label{th:main}
Let $F_{\e,\delta}$ be defined by \eqref{functionals2}, with $f$ satisfying assumptions {\rm ({\bf H})}, {\rm ({\bf G})} and {\rm ({\bf L})}. Assume moreover that
\begin{equation}\label{beta}
\lim_{\delta\to 0} \frac{r_\delta}{\delta^{\frac {d}{d-p}}}=\beta
\end{equation}
and that $\delta=\delta_\e$ is such that
\begin{equation}\label{zero}
\lim_{\e\to 0} \frac{\e}{r_{\delta_\e}}=0
\end{equation}
for some real number $\beta>0$.
Then
\[
\Gamma(L^p)\hbox{-}\lim_{\e\to 0} F_{\e,\delta_\e} (u)=
\begin{cases}\displaystyle \int_\Omega f_{hom}(\nabla u)\, dx +\beta^{d-p}\int_\Omega \varphi(u)\, dx & \text{if}\ u\in W^{1,p}(\Omega;\mathbb{R}^m),\cr\
+\infty & \text{otherwise},
\end{cases}
\]
where $f_{hom} (S)$ is defined by \eqref{homform} and for every $z\in \mathbb{R}^m$
\begin{equation}\label{denscapacitaria}
 \varphi(z):=\inf\Big\{\int_{\mathbb{R}^d} f_{hom}(\nabla v)\, dx: \  v-z\in L^{p*}(\mathbb{R}^d;\mathbb{R}^m),\ v\equiv 0\ \text{in}\ B_1,\  v\in W^{1,p}_{\loc}(\mathbb{R}^d;\mathbb{R}^m)\Big\}.
 \end{equation}
\end{theorem}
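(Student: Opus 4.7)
The plan is to follow the two-scale strategy of Ansini--Braides \cite{AB} adapted to the non-local setting, splitting the proof into matching upper and lower bounds. The bulk contribution $\int_\Omega f_{hom}(\nabla u)\,dx$ will come from Theorem \ref{convexhom}, while the capacitary term will be produced by a careful localization around each perforation, using the fact that the regime $\e = o(r_\delta)$ makes the non-local energy behave locally at the scale $r_\delta$ of the perforations.

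For the upper bound, by density and the coercivity/growth of $f_{hom}$ from Remark \ref{phomfhom} it suffices to produce a recovery sequence for smooth $u$. On each cell $\delta i + Q_\delta \subset\subset\Omega$, set $z_i := u(\delta i)$ and pick a near-minimiser $v_i$ of the capacitary problem \eqref{denscapacitaria} defining $\varphi(z_i)$; then rescale $w_{i,\delta}(x) := v_i\big((x-\delta i)/r_\delta\big)$, so that $w_{i,\delta}$ vanishes on $B_{r_\delta}(\delta i)$ and equals $z_i$ outside a concentric ball. I would interpolate via cut-off on an intermediate annulus between $B_{r_\delta}(\delta i)$ and $\partial Q_\delta(\delta i)$ so as to glue the $w_{i,\delta}$ to a bulk recovery sequence for the unconstrained $\mathcal{F}_\e$, produced by Theorem \ref{convexhom}. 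The $p$-homogeneity of $f_{hom}$ then implies that the local energy of each rescaled profile equals $r_\delta^{d-p}\varphi(z_i)+o(1)$, and summing over the $\sim \delta^{-d}$ cells gives the Riemann sum $\delta^{-d} r_\delta^{d-p}\sum_i \varphi(z_i)|Q_\delta| \to \beta^{d-p}\int_\Omega \varphi(u)\,dx$ thanks to \eqref{beta}. The assumption $\e = o(r_\delta)$ is exactly what guarantees that the non-local energy of these profiles converges to the local energy with integrand $f_{hom}$, so that $\varphi$ in the limit is the local capacity density \eqref{denscapacitaria}.

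For the lower bound, given $u_\e\to u$ in $L^p$ with $\sup_\e F_{\e,\delta_\e}(u_\e)<+\infty$, the coercivity (G0) together with the non-local Sobolev inequality forces $u\in W^{1,p}(\Omega;\rr^m)$. I would then run a De Giorgi fundamental-estimate/joining argument on annuli surrounding each perforation, decoupling the two contributions: the outer, far-from-perforation part is bounded from below via the $\Gamma$-liminf of $\mathcal{F}_\e$ (Theorem \ref{convexhom}) by $\int_\Omega f_{hom}(\nabla u)\,dx$, while on each cell the energy localised near $B_{r_\delta}(\delta i)$ is bounded from below by a rescaled non-local capacity evaluated at the average $z_{i,\e}$ of $u_\e$ over a reference subset of $Q_\delta(\delta i)$. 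Proposition \ref{capterm} then identifies the limit of these rescaled non-local capacities with $\varphi$; summing and converting the discrete sums into integrals using $z_{i,\e}\to u$ and the $p$-homogeneity/continuity of $\varphi$ yields the capacitary term with prefactor $\beta^{d-p}$.

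The main obstacle is exactly the proof of Proposition \ref{capterm}. In the local setting one uses the classical Gagliardo--Nirenberg--Sobolev inequality to obtain the $L^{p^*}_{\loc}$ compactness needed to pass to the limit in the capacitary minimum problems on $\rr^d$. Here the candidate minimisers are not Sobolev at fixed $\e$, so such compactness is not immediate. This is where the non-local Gagliardo--Nirenberg--Sobolev inequality of Theorem \ref{GNS} enters: it controls the $L^{p^*}$-norm of appropriate piecewise constant interpolates of admissible sequences by the non-local energy $G_\e^{r,p}$, which is in turn dominated by $\mathcal{F}_\e$. The resulting compactness, combined with the regime $\e = o(r_\delta)$ which makes the effective rescaled non-local length $\e/r_\delta$ vanish, allows one to identify the limit of the rescaled non-local capacities with the local one in \eqref{denscapacitaria}. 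The tail behaviour on $\rr^d$ required in this identification is controlled using the constraint $v-z\in L^{p^*}(\rr^d;\rr^m)$ and the decay assumption (G1) on $M(\xi)$.
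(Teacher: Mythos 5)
Your proposal follows the same overall strategy as the paper's proof: a joining argument around the perforations to decouple the bulk contribution (from Theorem \ref{convexhom}) from the capacitary contribution (from rescaled approximating capacitary densities, Propositions \ref{capterm} and \ref{th:riemannsum}), with the non-local Gagliardo--Nirenberg--Sobolev inequality of Theorem \ref{GNS} correctly identified as the key tool behind Proposition \ref{capterm}. However, there are two preparatory reductions that the paper performs and that your sketch omits, both of which are genuinely needed rather than cosmetic. First, the paper reduces in Step~1 to truncated integrands $f^T(\xi,z)=\chi_{B_T}(\xi)f(\xi,z)$ via \cite[Lemma~5.1]{AABPT} and monotone convergence; the joining Lemma \ref{raccordo} is formulated for $\mathcal{F}^T_{\e_j}$ precisely because the cut-off on an annulus of width $\sim\rho_j$ controls only interactions up to range $T\e_j$, and long-range tails of $f$ cannot be absorbed without this truncation. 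Second, the lower bound requires replacing $u_j$ by the uniformly $L^\infty$-bounded sequence $\Phi_M(u_j)$ produced by Lemma \ref{truncation}: Proposition \ref{th:riemannsum} is proved under an $L^\infty$ bound, and without it the cell averages $u_j^{i,k_i}$ fed into the approximating capacitary densities are not controlled.

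There is also a gap in your upper bound construction. You propose rescaling a near-minimiser $v_i$ of the \emph{limit} problem \eqref{denscapacitaria}, but such $v_i$ is a priori only in $W^{1,p}_{\loc}\cap L^{p^*}$, not constant outside a ball, and the statement that the \emph{non-local} energy of $w_{i,\delta}(x)=v_i((x-\delta i)/r_\delta)$ at scale $\e$ equals $r_\delta^{d-p}\varphi(z_i)+o(1)$ is precisely a $\Gamma$-limsup inequality with prescribed boundary and obstacle conditions, which is not automatic for a non-smooth $v_i$. One needs to first approximate $v_i$ by smooth profiles constant outside a compact set (the density argument inside the proof of Proposition \ref{capterm}) and then invoke the constrained recovery result \cite[Proposition~5.3]{AABPT}. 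The paper avoids this difficulty altogether by constructing the recovery sequence from near-minimisers of the \emph{approximating} problems $\varphi_{s_j,T,R_j^i}$, which are automatically admissible for $\mathcal{F}^T_{s_j}$ at the correct non-local scale, and then identifying the limit of their energies with $\varphi^T$ via Proposition \ref{th:riemannsum}. Your route is viable but requires making those two approximation steps explicit.
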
 

\begin{theorem}[Nonlocal capacitary term]\label{th:mainNL}
Let $F_{\e,\delta}$ be defined by \eqref{functionals2}, with $f$ satisfying assumptions {\rm ({\bf H})}, {\rm ({\bf G})} and {\rm ({\bf L})}. Assume moreover that
$$
\lim_{\delta\to 0} \frac{r_\delta}{\delta^{\frac {d}{d-p}}}=\beta
$$
and that $\delta=\delta_\e$ is such that
\begin{equation}\label{alfa}
\lim_{\e\to 0} \frac{\e}{r_{\delta_\e}}=\alpha
\end{equation}
for some real numbers $\alpha,\beta>0$.  
Then
\begin{equation}\label{gammaconvexNL}
\Gamma(L^p)\hbox{-}\lim_{\e\to 0} F_{\e,\delta_\e} (u)=
\begin{cases}\displaystyle \int_\Omega f_{hom}(\nabla u)\, dx +\beta^{d-p}\int_\Omega \varphi_{NL,\alpha}(u)\, dx & \text{if}\ u\in W^{1,p}(\Omega;\mathbb{R}^m),\cr\
+\infty & \text{otherwise},
\end{cases}
\end{equation}
where $f_{hom} (S)$ is defined by \eqref{homform} and for every $z\in \mathbb{R}^m$
 \begin{equation}\label{denscapacitariaNL}
 \varphi_{NL,\alpha}(z):=\inf\left\{\mathcal{F}_\alpha(v,\rr^d):  \ v-z\in L^{p}(\rr^d;\mathbb{R}^m),\ v\equiv 0\ \text{in}\ B_1,\  v-z\  {\rm compactly\  supported}\right\},
 \end{equation}
being  $\mathcal{F}_\alpha$ defined in \eqref{uncofunctionalsloc} with $\e=\alpha$.
 \end{theorem}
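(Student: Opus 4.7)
The plan is to adapt, in the regime $\e \sim r_{\delta_\e}$, the Ansini--Braides strategy that underlies Theorem \ref{th:main}, exploiting a rescaling that blows up each perforation to unit size while keeping the nonlocal step-size finite. Around every $\delta_\e i$ set $v(y) := u(\delta_\e i + r_{\delta_\e} y)$; then $v \equiv 0$ on $B_1$, and by hypothesis $(\mathbf{H})$ together with $\e/r_{\delta_\e}\to \alpha$ one has $f(\xi, D_\e^\xi u(x)) = r_{\delta_\e}^{-p}\, f(\xi, D_\alpha^\xi v(y))$ up to lower order, while the change of variables contributes a factor $r_{\delta_\e}^d$. Since there are $\sim |\Omega|/\delta_\e^d$ perforations and $r_{\delta_\e}^{d-p}/\delta_\e^d \to \beta^{d-p}$, the total capacitary weight is $\beta^{d-p}|\Omega|$ and the per-perforation cost is exactly that of $\mathcal{F}_\alpha$ on $\rr^d$ appearing in \eqref{denscapacitariaNL}.

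For the liminf inequality, let $u_\e \to u$ in $L^p$ with $u_\e \in L^p_{\delta_\e}(\Omega;\rr^m)$ and $\sup_\e F_{\e,\delta_\e}(u_\e) < +\infty$. I would first pass to the truncated functionals $F_{\e,\delta_\e}^T$: using $(\mathbf{L})$ and $(\mathbf{G1})$, the difference $F_{\e,\delta_\e} - F_{\e,\delta_\e}^T$ is bounded uniformly in $\e$ by a term vanishing as $T\to\infty$. Next, choose an intermediate scale $\rho_\e$ with $r_{\delta_\e}\ll \rho_\e \ll \delta_\e$ and split the energy between $U_\e := \bigcup_i B_{\rho_\e}(\delta_\e i)\cap \Omega$ and $\Omega\setminus U_\e$. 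On the complement, the unconstrained $\Gamma$-convergence of Theorem \ref{convexhom} together with $|U_\e|\to 0$ gives at least $\int_\Omega f_{hom}(\nabla u)\,dx$. On each ball, rescale by $r_{\delta_\e}$ to obtain $v_\e^i$ defined on the expanding domain $B_{\rho_\e/r_{\delta_\e}}$, vanishing on $B_1$, with rescaled energy $r_{\delta_\e}^{d-p}\,\mathcal{F}_{\e/r_{\delta_\e}}(v_\e^i,\cdot)$; the nonlocal Gagliardo--Nirenberg--Sobolev inequality of Theorem \ref{GNS} yields an $L^{p^*}$-bound producing a locally convergent subsequence $v_\e^i \to v^i$, admissible in \eqref{denscapacitariaNL} for the appropriate target value. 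Lower semicontinuity of $\mathcal{F}_\alpha$ combined with the Riemann-sum identity $\delta_\e^d \sum_i \varphi_{NL,\alpha}(u(\delta_\e i)) \to \int_\Omega \varphi_{NL,\alpha}(u)\,dx$, multiplied by $r_{\delta_\e}^{d-p}/\delta_\e^d\to \beta^{d-p}$, yields the required lower bound.

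For the limsup, by density I reduce to $u \in C_c^\infty(\Omega;\rr^m)$. Given $\eta>0$ and $z\in \rr^m$, fix a near-minimizer $w_{z,\eta}$ for $\varphi_{NL,\alpha}(z)$ which is $\equiv 0$ on $B_1$ and $\equiv z$ outside some ball $B_{R_\eta}$. The recovery sequence is built cell by cell: for each $i$ with $B_{R_\eta r_{\delta_\e}}(\delta_\e i)\subset\Omega$, set $u_\e(x) := w_{u(\delta_\e i),\eta}((x-\delta_\e i)/r_{\delta_\e})$ on that ball, and $u_\e := u$ outside (with a cut-off transition on a thin annulus absorbing the oscillation of $u$, whose contribution is $o(r_{\delta_\e}^{d-p})$ per cell thanks to smoothness of $u$ and integrability of $M$). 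Reading the rescaling backwards gives a per-cell contribution at most $r_{\delta_\e}^{d-p}(\varphi_{NL,\alpha}(u(\delta_\e i))+\eta)$; the bulk region delivers $\int_\Omega f_{hom}(\nabla u)\,dx$ via the unconstrained recovery sequence of Theorem \ref{convexhom}; and cross-interactions between distinct perforations vanish after truncation at $|\xi|\le T$, since their mutual distance $\delta_\e - 2R_\eta r_{\delta_\e}$ eventually exceeds $T\e$. A diagonal argument as $\eta\to 0$ then concludes the construction.

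The main obstacle I anticipate is the nonlocal analogue of Proposition \ref{capterm}: proving that finite-volume capacitary minima for $\mathcal{F}_\alpha$ on $B_R$, with boundary datum $z$ and obstacle $0$ on $B_1$, converge to $\varphi_{NL,\alpha}(z)$ as $R\to \infty$, together with compactness of the corresponding minimizing sequences. In the local case this is classical via the Gagliardo--Nirenberg--Sobolev inequality; here Theorem \ref{GNS} plays exactly the same role, providing the $L^{p^*}$-control needed to extract weak limits in $z + L^{p^*}(\rr^d;\rr^m)$, while lower semicontinuity of $\mathcal{F}_\alpha$ (a consequence of $(\mathbf{L})$, $(\mathbf{G1})$, and Fatou) closes the identification.
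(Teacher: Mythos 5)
Your outline tracks the paper's three-step architecture (truncation to $F^T_{\e,\delta_\e}$, split bulk vs.\ perforation contributions, rescale to capacitary densities and pass to Riemann sums), but it misidentifies the central difficulty in the \emph{nonlocal} regime and leaves a real gap in the decoupling.

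First, on the role of Theorem~\ref{GNS}. You propose to use the nonlocal Gagliardo--Nirenberg--Sobolev inequality to extract $L^{p^*}$-weak limits of the rescaled functions $v_\e^i$ and identify the per-cell cost via lower semicontinuity of $\mathcal{F}_\alpha$. This is exactly the mechanism needed for Theorem~\ref{th:main} (the case $\e=o(r_\delta)$), where after rescaling the parameter $s_j=\e_j/r_{\delta_{\e_j}}\to 0$ and one must pass a limit in $\e$ through Proposition~\ref{capterm}; there the GNS estimate provides the missing $L^{p^*}$-control. In the regime \eqref{alfa} the rescaled parameter stays bounded away from zero (tends to $\alpha$), so one never sends $\e\to 0$ in the capacitary problems: the paper simply observes that $R\mapsto\varphi_{\alpha,T,R}(z)$ is decreasing (by the reformulation \eqref{equivvarphi}), hence $\varphi_{\alpha,T,R}(z)\ge\varphi^T_{NL,\alpha}(z)$ for every $R$ and $\lim_{R\to\infty}\varphi_{\alpha,T,R}(z)=\varphi^T_{NL,\alpha}(z)$ by extending/restricting competitors. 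No compactness is needed for this identification. Moreover, as stated, your compactness argument would produce a limit $v^i$ in $z+L^{p^*}(\rr^d;\rr^m)$, but the admissible class in \eqref{denscapacitariaNL} requires $v-z\in L^p(\rr^d;\rr^m)$ \emph{with compact support}; a generic $L^{p^*}$-weak limit is not admissible, so lower semicontinuity alone does not deliver $\ge \varphi_{NL,\alpha}(z)$. This is exactly the step the monotonicity-in-$R$ argument sidesteps.

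Second, and more importantly, your $\liminf$ argument splits the energy between $U_\e$ and $\Omega\setminus U_\e$ without first modifying $u_\e$ to be constant in an annulus surrounding each perforation. For these nonlocal functionals the interaction between $B_{\rho_\e}(\delta_\e i)$ and its complement does not vanish, and the rescaled $v_\e^i$ need not satisfy the boundary condition $v\equiv z$ near $\partial B_{R^i_j}$ that makes it a competitor for $\varphi_{\alpha,T,R^i_j}(z)$. The paper handles this with the joining Lemma~\ref{raccordo}, which replaces $u_j$ by $w_j$ equal to a local average on a thin annulus $\partial B_{\rho_{j,k_i}}(\delta_{\e_j} i)+B_{T\e_j}$ chosen so that the energetic cost of the modification is $O(1/N)$; this simultaneously fixes the cell boundary datum, decouples perforation from bulk, and makes the cell average the quantity to feed into the Riemann-sum proposition. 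You also need the $L^\infty$ truncation step (Lemma~\ref{truncation} and, in the $\limsup$, Proposition~\ref{potcaplimitato}) to get quantitative control as $t_j=\alpha r_{\delta_{\e_j}}/\e_j\to 1$ (the obstacle-shift error $C(M)|1-t_j|$ is bounded only for $L^\infty$-bounded competitors). These ingredients are missing from your sketch, and the uniform local Lipschitz estimate on the approximating densities (Proposition~\ref{th:equilip}, which is where hypothesis ({\bf L}) actually enters) is needed to justify the Riemann-sum convergence you invoke.
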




\begin{remark}\label{altrescale} 
 Taking into account  the non degeneracy of the capacitary densities \eqref{denscapacitaria}, \eqref{denscapacitariaNL} proved in Proposition \ref{pgrowthdens} below, and 
arguing by comparison, one easily infers that the results stated in Theorem \ref{th:main} and  Theorem \ref{th:mainNL} can be "continuously" extended  to the  case $\beta=0$ and  $\beta=+\infty$. More in details, if $\beta =0$  and either \eqref{zero} or \eqref{alfa} hold,  
then the functionals  $F_{\e,\delta_\e}$ $\Gamma$-\,converge  to the energy functional defined in \eqref{gammaconvex}, while, if $\beta =+\infty$  and either \eqref{zero} or \eqref{alfa} hold,  the $\Gamma$-limit of the functionals $F_{\e,\delta_\e}$  is trivially $0$ if $u\equiv 0$ and $+\infty$ otherwise. This phenomenon is consistent with the asymptotic behaviour  of local energies of Dirichlet type in periodically perforated domains. We will see in Section \ref{sopracritico} that this is not the case if $\alpha=+\infty$. 
\end{remark}

 For later use and reader's convenience we redefine the density functions we have introduced so far in the case $f(\xi,z)$ is replaced by $f^T(\xi,z)=\chi_{B_T}(\xi)f(\xi,z)$. More precisely we set
\begin{equation}\label{homformT}
f_{hom}^T(S):=\lim_{R\to\infty}\frac{1}{R^d}\inf\Big\{\int_{Q_R}\int_{Q_R}f^T(y-x,v(y)-v(x))dx \, dy : v\in\mathcal{D}^{S}(Q_R)\Big\},
\end{equation}
\begin{equation}\label{denscapacitariaT}
 \varphi^T(z):=\inf\Big\{\int_{\mathbb{R}^d} f_{hom}^T(\nabla v)\, dx: \  v-z\in L^{p*}(\mathbb{R}^d;\mathbb{R}^m),\ v\equiv 0\ \text{in}\ B_1,\  v\in W^{1,p}_{\loc}(\mathbb{R}^d;\mathbb{R}^m)\Big\},
 \end{equation}
\begin{equation}\label{denscapacitariaNLT}
 \varphi_{NL,\alpha}^T(z):=\inf\left\{\mathcal{F}_\alpha^T(v,\rr^d):  \ v-z\in L^{p}(\rr^d;\mathbb{R}^m),\ v\equiv 0\ \text{in}\ B_1,\  v-z\  {\rm compactly\  supported}\right\}.
 \end{equation}

\section{Preliminary results}\label{preli}

In this section we collect some results that will be used in Section \ref{supporting}. 
\medskip

\noindent{\bf  Capacity}
\medskip

\noindent Let us recall the notion of $p$-capacity for a given exponent $p\in (1,d)$ (see for instance \cite{EG},\cite{HKM}).  Given an open set $A\subset \rr^d$ and an open  set  $E\subset\subset A$,  the 
{\it relative $p$-capacity} of $E$ in $A$ is defined as 
$$
{\rm cap}_p(E,A)=\inf\left\{ \int_A |\nabla u|^p\, dx : u\in W^{1,p}_0(A), \, u= 1 \hbox{ a.e. in }E\right\}. 
$$
If $A=\rr^d$, we simply write ${\rm cap}_p(E)$. 
It follows by the very definition that the set function ${\rm cap}_p(E,A)$ is increasing in the variable $E$ and decreasing in the variable $A$. 
In addition,  the following properties hold true  
\begin{equation}\label{capmonot}
\begin{split} 
& {\rm cap}_p(E)=\inf\left\{ \int_{\rr^d} |\nabla u|^p\, dx : u\in W^{1,p}_{\rm{loc}} (\rr^d)\cap L^{p^*}(\rr^d), \, u =1 \hbox{ a.e. in }E\right\}\\
&=\lim_{R\to +\infty}{\rm cap}_p(E, B_R)=\inf_{R>0} {\rm cap}_p(E, B_R),
\end{split} 
\end{equation} 
where $p^*:=\frac{pd}{d-p}$ is the conjugate exponent of $p$. It can be also proved that  $ {\rm cap}_p(E)>0$ if $|E|>0$.

 \begin{remark}\label{capavectorial} 
 
 One may also consider, for $z\in \rr^m$,  the vectorial infimum problems 
\begin{equation}\label{capvect}
\inf\left\{ \int_A |\nabla u|^p\, dx : u\in  W^{1,p}_0(A;\rr^m), \, u=z \hbox{ a.e. in }E\right\}.
\end{equation}
Note that, thanks to the $p$\,-\,homogeneity  and the rotational invariance of \eqref{capvect}, it holds
\[
\begin{split} & \inf\left\{ \int_A |\nabla u|^p\, dx : u\in  W^{1,p}_0 (A;\rr^m), \, u=z \hbox{ a.e. in }E\right\}\\
& =|z|^p  \inf\left\{ \int_A |\nabla u|^p\, dx : u\in  W^{1,p}_0 (A;\rr^m), \, u=(1, 0, \ldots, 0) \hbox{ a.e in }E\right\}, 
\end{split} 
\]
and the infimum in the  last term can be in turn confined to functions $v\in W^{1,p}_0(A;\rr^m) $ such that $v=(v_1,0, \ldots, 0)$.
Hence, we can conclude that, for any $z\in \rr^m$,  
\begin{equation}\label{stimaciuno}
\inf\left\{ \int_A |\nabla u|^p\, dx : u\in W^{1,p}_0(A; \rr^m), \, u= z \hbox{ a.e. in }E\right\} = {\rm cap}_p(E,A) |z|^p .
\end{equation}
\end{remark}
\medskip

\noindent{\bf Convolution-type energies}
\medskip

\noindent The following results, contained in \cite{AABPT},   extend corresponding results  in Sobolev spaces to the case of convolution-type energies.

\noindent Let $r,\e>0$, and $p>1$. We set, for every  open set $A \subset \rr^d$ and $u\in L^p(A;\rr^m)$, 
\begin{equation}\label{Gfunctional}
G_\e^{r,p}(u,A)=\int_{B_{r}}\int_{A_\e(\xi)}\left| D_\e^\xi u(x)\right|^p\, dx\, d\xi,
\end{equation}
where $D_\e^\xi u(x)$ and $A_\e(\xi)$ are defined by \eqref{finitediff} and \eqref{insiemiAxi}, respectively.

\noindent The next proposition rephrases Lemma 4.1 in  \cite{AABPT} where the authors show that long-range energy contributions can be controlled by the short-range energy $G_\e^{r,p}$. 

\begin{proposition}\label{boundlemma}
For every $r>0$ there exists a positive constant $C$ such that, for any open set $E\subset\Omega$, 
for every $\xi\in\mathbb{R}^d$
and $u\in L^p(\Omega;\mathbb{R}^m)$,
there holds
$$\int_E\Big|\frac{u(x+\e\xi)-u(x)}{\e}\Big|^p dx \le C(|\xi|^p+1) G_\e^{r,p}(u, E+B_{\e(r+|\xi|)}). $$
 for any $\e>0$ such that 
\begin{equation}\label{eps-small}
\e \,r < \dist(E+B_{\e(r+|\xi|)},\Omega^c).
\end{equation}
\end{proposition}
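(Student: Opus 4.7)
The plan is to combine a telescoping argument (to reduce the step $\e\xi$ to many short steps $\e\eta$ with $|\eta|\le r$) with an averaging trick (to pass from a single, pointwise value of $\eta$ to the $B_r$-integral appearing in $G_\e^{r,p}$). The bookkeeping of the integration domains, so as to land precisely in the fattened set $E+B_{\e(r+|\xi|)}$ allowed in the statement, is the delicate part, and motivates a careful choice of constants.

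First I would choose $N=N(\xi,r)\in\mathbb{N}$ minimal with $|\xi|/N\le r/4$, so $N\le C(r)(|\xi|+1)$ and $\eta:=\xi/N$ satisfies $|\eta|\le r/4$. The identity
$$u(x+\e\xi)-u(x)=\sum_{k=0}^{N-1}\bigl(u(x+\e(k+1)\eta)-u(x+\e k\eta)\bigr),$$
together with Jensen's discrete inequality, followed by the translation $y=x+\e k\eta$ in the $k$-th summand (each translated domain sits in $E+B_{\e|\xi|}$), yields
$$\int_E|D_\e^\xi u(x)|^p\,dx\le N^p\int_{E+B_{\e|\xi|}}|D_\e^\eta u(y)|^p\,dy.$$
Since $N^p\le C(r)(|\xi|^p+1)$, it suffices to bound the right-hand side by $G_\e^{r,p}(u,E+B_{\e(r+|\xi|)})$.

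Next, for the averaging step I would use the algebraic identity $D_\e^\eta u(y)=D_\e^\zeta u(y)+D_\e^{\eta-\zeta}u(y+\e\zeta)$ and Jensen to get
$$|D_\e^\eta u(y)|^p\le 2^{p-1}\bigl(|D_\e^\zeta u(y)|^p+|D_\e^{\eta-\zeta}u(y+\e\zeta)|^p\bigr),$$
which is pointwise in $\zeta$; since the left-hand side does not depend on $\zeta$, I then average over $\zeta\in B_{r/4}$. With $|\eta|\le r/4$ and $|\zeta|\le r/4$, the vector $\eta-\zeta$ lies in $B_{r/2}\subset B_r$, so both resulting integrands have the form $|D_\e^\tau u(\cdot)|^p$ with $\tau\in B_r$, as required by the definition of $G_\e^{r,p}$.

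Finally I would integrate in $y$, apply Fubini and, in the second piece, perform the affine change of variables $(w,\tau)=(y+\e\zeta,\eta-\zeta)$. A direct geometric check shows that with $|\zeta|\le r/4$ and $|\tau|\le r/2$ the spatial points $w$ and $w+\e\tau$ appearing in the integrand both lie in $E+B_{\e(r+|\xi|)}$ (we use $1/4+1/2<1$); the hypothesis \eqref{eps-small} then guarantees that this fattened set is contained in $\Omega$, so all evaluations of $u$ are legitimate and $A_\e(\tau)$-type domains accommodate them. Collecting the two bounds gives
$$\int_{E+B_{\e|\xi|}}|D_\e^\eta u(y)|^p\,dy\le C(r)\,G_\e^{r,p}\bigl(u,E+B_{\e(r+|\xi|)}\bigr),$$
which combined with Step~1 yields the claim. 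The main obstacle is precisely the calibration of the averaging radius (here $r/4$) so that the nested fattenings produced by the translation and the change of variables still fit inside the set $E+B_{\e(r+|\xi|)}$ prescribed in the statement, rather than in some strictly larger enlargement.
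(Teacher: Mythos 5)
The paper does not prove Proposition \ref{boundlemma}; it simply cites \cite[Lemma 4.1]{AABPT}, so there is no internal proof to compare against. Your blind argument is correct and self-contained. The two ingredients---telescoping $D_\e^\xi u(x)=\sum_{k=0}^{N-1}D_\e^\eta u(x+\e k\eta)$ with $\eta=\xi/N$, $|\eta|\le r/4$, giving the factor $N^p\le C(r)(|\xi|^p+1)$ after a change of variables into $E+B_{\e|\xi|}$; and the averaging over $\zeta\in B_{r/4}$ of the identity $D_\e^\eta u(y)=D_\e^\zeta u(y)+D_\e^{\eta-\zeta}u(y+\e\zeta)$---are exactly the natural route to such an inequality, and your calibration works: with $|\eta|,|\zeta|\le r/4$ one has $|\eta-\zeta|\le r/2$ and all spatial translates fall in $E+B_{\e(3r/4+|\xi|)}\subset E+B_{\e(r+|\xi|)}$, so both pieces after Fubini and the unimodular change $(y,\zeta)\mapsto(w,\tau)=(y+\e\zeta,\eta-\zeta)$ are dominated by $G_\e^{r,p}(u,E+B_{\e(r+|\xi|)})$. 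One minor remark: your proof only uses the containment $E+B_{\e(r+|\xi|)}\subset\Omega$, which is weaker than \eqref{eps-small} ($\e r<\dist(E+B_{\e(r+|\xi|)},\Omega^c)$); the extra margin is harmless here, and is presumably used at other points in \cite{AABPT}.
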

 
 \begin{remark}\label{tuttolospazio}
Note that if $\Omega=\rr^d$ then \eqref{eps-small} is satisfied for any $\e>0$ and $\xi\in\rr^d$, deriving in particular the following estimate, which will be useful later: for every $r>0$, there exists $C=C(r)$ such that, for every $\xi\in\mathbb{R}^d$, $\e>0$, and $u\in L^p_{loc}(\rr^d;\rr^m)$, with $u\equiv z$ on $\rr^d\setminus B_{R-r\e}$, $z\in\rr^m$, there holds
\begin{equation}\label{eqtuttoerred}
\int_{\rr^d}\Big|\frac{u(x+\e\xi)-u(x)}{\e}\Big|^p dx \le C(|\xi|^p+1) G_\e^{r,p}(u,\rr^d).
\end{equation}
\end{remark}

\noindent As a consequence of Lemma \ref{boundlemma} and a result concerning  extension operators (see \cite[Theorem 4.1]{AABPT}),  the following estimate is derived. 
\begin{corollary}\label{boundlemma-lip}[\cite[Corollary 4.1]{AABPT}]
For any open set $A\in\mathcal{A}^{\rm reg}(\Omega)$ and $r>0$ there exist two positive constants $C=C(A)$ and $\e_0=\e_0(r,A)$ such that for every $\xi\in\mathbb{R}^d$ and $u\in L^p(A;\mathbb{R}^m)$ there holds
$$\int_{A_\e(\xi)}\left|\frac{u(x+\e\xi)-u(x)}{\e}\right|^p dx \le C(|\xi|^p+1) \big( G_\e^{r,p}(u,A)+\|u\|_{L^p(A;\rr^m)}^p \big),$$
for every $\e<\e_0$.
\end{corollary}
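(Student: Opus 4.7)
The plan is to reduce the inequality on the Lipschitz domain $A$ to Proposition \ref{boundlemma} applied with the whole space as reference, via an extension argument based on \cite[Theorem 4.1]{AABPT}. The central observation is that Proposition \ref{boundlemma}'s distance constraint \eqref{eps-small} disappears when the ambient domain is $\rr^d$, so large values of $|\xi|$ are no longer an obstruction; the price to pay is that we must first transport $u$ to a function defined on all of $\rr^d$ while controlling its short-range energy.

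First I would invoke the extension theorem to produce, for every $\e$ below a threshold $\e_0=\e_0(r,A)$ dictated by the Lipschitz character of $\partial A$, a function $\tilde u\in L^p(\rr^d;\rr^m)$ agreeing with $u$ on $A$ and satisfying
\[
G_\e^{r,p}(\tilde u,\rr^d)\le C(A)\bigl(G_\e^{r,p}(u,A)+\|u\|_{L^p(A;\rr^m)}^p\bigr).
\]
This non-local extension plays the same role as the classical Stein extension for Sobolev functions: the additive $L^p$ correction arises because local reflections across $\partial A$ control the non-local seminorm only up to a lower-order $L^p$ defect, while the smallness requirement on $\e$ ensures that such reflections can be performed at a scale compatible with the Lipschitz regularity of $\partial A$.

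Next I would observe that, for every $x\in A_\e(\xi)$, both $x$ and $x+\e\xi$ lie in $A$, so $D_\e^\xi u$ and $D_\e^\xi\tilde u$ coincide on $A_\e(\xi)$:
\[
\int_{A_\e(\xi)}|D_\e^\xi u|^p\,dx=\int_{A_\e(\xi)}|D_\e^\xi\tilde u|^p\,dx.
\]
At this stage I would apply Proposition \ref{boundlemma} with reference domain $\rr^d$ in place of $\Omega$ and with $E=A_\e(\xi)$; condition \eqref{eps-small} is vacuous because $(\rr^d)^c=\emptyset$, so the proposition gives
\[
\int_{A_\e(\xi)}|D_\e^\xi\tilde u|^p\,dx\le C(r)(|\xi|^p+1)\,G_\e^{r,p}(\tilde u,\rr^d).
\]
Chaining the two displays and absorbing the multiplicative constants into a single $C=C(A,r)$ yields the claim.

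The only delicate ingredient is the non-local extension estimate, which I would cite from \cite[Theorem 4.1]{AABPT} rather than reprove. Beyond that, the argument is a clean two-step sandwich: extend to $\rr^d$ so that Proposition \ref{boundlemma}'s distance condition becomes trivial, and then apply that proposition on an arbitrary $E\subset\rr^d$ with $E=A_\e(\xi)$.
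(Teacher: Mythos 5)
Your proof is correct and follows exactly the route the paper itself indicates: the text preceding the corollary states that it is "a consequence of Lemma \ref{boundlemma} and a result concerning extension operators (see \cite[Theorem 4.1]{AABPT})," which is precisely your extend-then-apply-on-$\rr^d$ argument. The only cosmetic discrepancy is that your chain naturally produces a constant $C=C(A,r)$ whereas the statement advertises $C=C(A)$ with the $r$-dependence absorbed into $\e_0$ and into $G_\e^{r,p}$ itself; since the corollary is cited verbatim from \cite[Corollary 4.1]{AABPT}, this is a matter of how the source bookkeeps the constants rather than a gap in your reasoning.
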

The following theorem states the analogue of the classical Poincar\'e-Wirtinger inequality for the functionals $G_\e^{r,p}$.
\begin{theorem}\label{pwineq}[\cite[Proposition 4.2]{AABPT}]
Let $r>0$ and let $A$ be a bounded connected open set of $\rr^d$ with Lipschitz boundary. Then for every measurable set $E\subset A$ with $|E|>0$ there exists a positive constant $C=C(A,E)$ such that for any $u\in L^p(A;\mathbb{R}^m)$ and  $\e>0$
\begin{equation}\label{dispw}
\int_A |u(x)-u_E|^p dx\le C G^{r,p}_\e(u,A).
\end{equation}
\end{theorem}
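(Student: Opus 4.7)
The plan is a compactness–contradiction argument, modeled on the classical Poincaré–Wirtinger proof. Suppose the inequality fails: then there exist sequences $\{u_n\}\subset L^p(A;\mathbb{R}^m)$ and $\e_n>0$ with
\[
\int_A |u_n - (u_n)_E|^p\, dx = 1 \quad \text{and} \quad G^{r,p}_{\e_n}(u_n, A) \to 0.
\]
Since $G^{r,p}_{\e_n}$ is invariant under the addition of constants to $u$, I may subtract $(u_n)_E$ and assume $(u_n)_E = 0$ together with $\|u_n\|_{L^p(A;\mathbb{R}^m)} = 1$.

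The crux is the compactness of $\{u_n\}$ in $L^p(A;\mathbb{R}^m)$. Up to a subsequence I may assume $\e_n \to \e_\infty \in [0,+\infty]$. In the significant regime $\e_n \to 0$, I extend the $u_n$ to a Lipschitz neighborhood of $A$ and invoke Corollary \ref{boundlemma-lip}, which yields
\[
\sup_n \int_{A_{\e_n}(\xi)}\Bigl|\frac{u_n(x+\e_n\xi)-u_n(x)}{\e_n}\Bigr|^p dx \le C(|\xi|^p+1),
\]
uniformly in $\xi\in\mathbb{R}^d$. This translates into uniform $L^p$-equicontinuity of small translations of the $u_n$, and Fréchet–Kolmogorov then gives relative compactness of $\{u_n\}$ in $L^p(A;\mathbb{R}^m)$. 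At a fixed positive scale $\e_\infty>0$, bare $L^p$-weak compactness combined with the convexity of $|z|^p$ is enough for the next step.

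With $u_n \to u$ in $L^p(A;\mathbb{R}^m)$ in hand, I pass to the limit in the energy. In the case $\e_n \to 0$, the nonlocal-to-local lower semi-continuity available in \cite{AABPT} forces $u\in W^{1,p}(A;\mathbb{R}^m)$ and
\[
\int_A |\nabla u|^p\, dx \le C\,\liminf_n G^{r,p}_{\e_n}(u_n, A) = 0,
\]
so $u$ is constant by connectedness of $A$. In the fixed-scale case, lower semi-continuity of the convex integrand yields $u(x+\e_\infty\xi) = u(x)$ for a.e.\ $x$ and $\xi\in B_r$, which again implies $u$ constant by connectedness. Strong $L^p$-convergence also transfers the condition $(u_n)_E = 0$ to the limit, forcing $u\equiv 0$ and contradicting $\|u\|_{L^p(A)} = 1$. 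The main difficulty is precisely the nonlocal $L^p$-compactness: only through the extension operator of \cite[Theorem 4.1]{AABPT} and the finite-difference estimate of Corollary \ref{boundlemma-lip} can the bound on $G^{r,p}_{\e_n}(u_n, A)$ be converted into the uniform equicontinuity of translations required by Fréchet–Kolmogorov.
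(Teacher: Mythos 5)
The paper does not prove this statement: Theorem \ref{pwineq} is stated as a citation to \cite[Proposition 4.2]{AABPT}, so there is no internal proof against which to compare the student's argument. Nevertheless, the proposed proof has a concrete gap.

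The compactness--contradiction scheme is reasonable, and the regime $\e_n\to 0$ is handled adequately (via Corollary \ref{boundlemma-lip}, the extension operator, and Fr\'echet--Kolmogorov, or more directly by citing Theorem \ref{kolcom}). The gap is in the regime $\e_n\to\e_\infty\in(0,+\infty)$. There you declare that ``bare $L^p$-weak compactness combined with the convexity of $|z|^p$ is enough,'' but the very next paragraph invokes \emph{strong} $L^p$-convergence in order to transfer the normalization $\|u_n\|_{L^p(A)}=1$ to the limit $u$. Weak convergence only yields $\|u\|_{L^p(A)}\le 1$, which is perfectly compatible with $u\equiv 0$ and produces no contradiction. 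Strong $L^p$-compactness has to be established in this regime too (e.g.\ by comparing $u_n$ with the mollification $T u_n(x)=\mint_{B_{\e_n r}}u_n(x+\zeta)\,d\zeta$: Jensen gives $\|u_n-Tu_n\|^p_{L^p}\le C\e_n^p\,G^{r,p}_{\e_n}(u_n,\cdot)\to 0$, while $Tu_n$ is precompact because it is a fixed-width mollification of a bounded $L^p$ family), but as written the argument is internally inconsistent.

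A second issue: you allow $\e_\infty\in[0,+\infty]$ but then silently drop $\e_\infty=+\infty$. This case is not cosmetic. Since $A$ is bounded of diameter $D$, the set $A_\e(\xi)$ is empty for $|\xi|>D/\e$, and a direct computation shows $G^{r,p}_\e(u,A)=O(\e^{-d-p})\to 0$ as $\e\to+\infty$ for \emph{any} fixed $u$. Consequently the inequality \eqref{dispw} with an $\e$-independent constant cannot hold uniformly for all $\e>0$ as literally written; the reference result must carry an implicit cap $\e\le\e_0$ (this is consistent with the hypothesis $\e_j\to 0$ in Theorem \ref{kolcom} and with the constant $\e_0$ in Corollary \ref{boundlemma-lip}). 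Your proof should either restrict to $\e\in(0,\e_0]$ from the outset, or flag that the unrestricted statement fails for large $\e$; as it stands the contradiction argument would have to succeed for $\e_n\to+\infty$, where it cannot.
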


\noindent When we replace $E$ and $A$ with a translation of $\lambda E$ and $\lambda A$, respectively, being $\lambda>0$ a scaling factor, we have the following result. 
\begin{proposition}\label{pwscaling}
Let $r>0$ and let $A$ be a bounded connected open set of $\rr^d$ with Lipschitz boundary. Then for every measurable set $E\subset A$ with $|E|>0$ there exists a positive constant $C=C(A,E)$ such that  for every  $x_0\in \rr^d$, $u\in L^p(\lambda A+x_0;\mathbb{R}^m)$ and  $\e>0$ 
\begin{equation}\label{dispw2}
\int_{\lambda A+x_0} |u(x)-u_{\lambda E+x_0}|^p dx\le C\,\lambda^p \,G^{r,p}_\e(u,\lambda  A+x_0).
\end{equation}
\end{proposition}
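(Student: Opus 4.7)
The plan is to reduce Proposition \ref{pwscaling} to the non-scaled Poincaré–Wirtinger inequality of Theorem \ref{pwineq} by a standard change of variables that rescales both the domain and the parameter $\e$.

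Given $u\in L^p(\lambda A+x_0;\rr^m)$, I would introduce the rescaled function
\[
v(y):=u(\lambda y+x_0),\qquad y\in A,
\]
so that $v\in L^p(A;\rr^m)$. A direct change of variables $x=\lambda y+x_0$ gives the identity $v_E=u_{\lambda E+x_0}$ for the averages, together with
\[
\int_A|v(y)-v_E|^p\,dy=\lambda^{-d}\int_{\lambda A+x_0}|u(x)-u_{\lambda E+x_0}|^p\,dx.
\]

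The key step is to match the finite-difference scale correctly. Setting $\e':=\e/\lambda$ and substituting $x=\lambda y+x_0$ in the double integral defining $G^{r,p}_{\e'}(v,A)$, the finite difference transforms as
\[
\frac{v(y+\e'\xi)-v(y)}{\e'}=\lambda\,\frac{u(x+\e\xi)-u(x)}{\e},
\]
while the set $A_{\e'}(\xi)$ becomes $(\lambda A+x_0)_\e(\xi)$ after rescaling, and the Jacobian contributes $\lambda^{-d}$. Collecting the factors yields
\[
G^{r,p}_{\e'}(v,A)=\lambda^{p-d}\,G^{r,p}_\e(u,\lambda A+x_0).
\]

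Applying Theorem \ref{pwineq} to $v$ on the fixed set $A$ with parameter $\e'>0$ (the constant $C=C(A,E)$ there is independent of the finite-difference scale) gives
\[
\int_A|v-v_E|^p\,dy\le C\,G^{r,p}_{\e'}(v,A).
\]
Combining the three identities above and multiplying through by $\lambda^d$ produces the claimed inequality \eqref{dispw2}. I expect no real obstacle here: the argument is essentially a dimensional consistency check, and the only thing to verify carefully is that the transformation of the domain of integration $A_{\e'}(\xi)\leftrightarrow (\lambda A+x_0)_\e(\xi)$ under $x=\lambda y+x_0$ is exact, which follows from the definition \eqref{insiemiAxi}.
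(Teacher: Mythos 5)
Your proposal is correct and follows essentially the same approach as the paper: rescale to $v(y)=u(\lambda y+x_0)$ on the fixed set $A$, apply Theorem \ref{pwineq} with the matching finite-difference scale $\e'=\e/\lambda$, and transform back. The paper merely simplifies by assuming $x_0=0$ at the outset; otherwise the two arguments coincide, including the key scaling identity $G^{r,p}_{\e/\lambda}(v,A)=\lambda^{p-d}G^{r,p}_\e(u,\lambda A+x_0)$.
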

\begin{proof} It is not restrictive to assume that $x_0=0$. 
If $u\in L^p(\lambda A;\mathbb{R}^m)$ the function $w(y)=u(\lambda y)$ belongs to $ L^p(A;\mathbb{R}^m)$. Writing inequality \eqref{dispw} for $w$ with $\e$ replaced by $\frac{\e}\lambda$ we get
$$
\int_{A} |w(y)-w_{E}|^p dy\le C(A,E) \,G^{r,p}_{\frac{\e}{\lambda}}(w,A).
$$
On the other hand $w_{E}=u_{\lambda E}$ and the change of variable $x=\lambda y$ gives the desired result.
\end{proof}
Eventually, the next result accounts  for the compactness in the strong $L^p$-topology of sequences of functions with uniformly bounded energy on a regular bounded set of $\rr^d$. 


\begin{theorem}\label{kolcom}[\cite[Theorem 4.2]{AABPT} ]
Let $A$ be any bounded  open Lipschitz set of $\rr^d$ and let $\{u_\e\}_\e\subset L^p(A;\mathbb{R}^m)$ be such that for some $r>0$
$$\sup_{\e>0} \left\{\| u_\e\|_{L^p(A;\rr^m)}+G_\e^{r,p}(u_\e,A)\right\}<+\infty.$$
Then, for any  $\e_j\to 0$, $\{u_{\e_j}\}_j$ is relatively compact in $L^p(A;\mathbb{R}^m)$ and every limit of a converging subsequence lies  in $W^{1,p}(A;\mathbb{R}^m)$.
\end{theorem}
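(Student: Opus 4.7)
The plan is to combine the Kolmogorov--Riesz--Fr\'echet compactness criterion in $L^p$ with the classical finite-difference characterization of Sobolev spaces. Both are driven by the uniform bound on $G_\e^{r,p}(u_\e,A)$, which, via Proposition~\ref{boundlemma}, controls translations of $u_\e$ at arbitrary (not only $\e$-scaled) shifts $h\in\rr^d$. First I would establish equicontinuity of translations uniformly in $\e$, which together with the uniform $L^p$ bound gives relative compactness. Then the same estimate, passed to the limit, forces any accumulation point to be in $W^{1,p}$.

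For the equicontinuity step, fix $A'\subset\subset A$ with Lipschitz boundary and $h\in\rr^d$ with $|h|$ small. In Proposition~\ref{boundlemma} I would choose $\xi=h/\e$, so that $\e\xi=h$ and $\e(r+|\xi|)=\e r+|h|$. For $|h|$ and $\e$ small enough the condition $\e r<\dist(A'+B_{\e r+|h|},\partial A)$ is satisfied because $A'\subset\subset A$, and the proposition yields
\[
\int_{A'}\Big|\frac{u_\e(x+h)-u_\e(x)}{\e}\Big|^p\,dx\le C\Big(\tfrac{|h|^p}{\e^p}+1\Big)\,G_\e^{r,p}(u_\e,A).
\]
Multiplying by $\e^p$ collapses the apparently singular factor into the harmless quantity $|h|^p+\e^p$, giving
\[
\int_{A'}|u_\e(x+h)-u_\e(x)|^p\,dx\le C(|h|^p+\e^p)\,G_\e^{r,p}(u_\e,A),
\]
which is uniformly small as $|h|,\e\to 0$ thanks to the assumed bound on $G_\e^{r,p}$. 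Kolmogorov--Riesz--Fr\'echet then yields relative compactness of $\{u_{\e_j}\}$ in $L^p(A';\rr^m)$. A diagonal extraction over an exhaustion $A'_n\nearrow A$, together with the extension-operator argument underlying Corollary~\ref{boundlemma-lip}, upgrades this to compactness in $L^p(A;\rr^m)$.

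For the identification of the limit, suppose $u_{\e_j}\to u$ in $L^p(A;\rr^m)$. Fix $h\in\rr^d$ and pass to the limit $\e_j\to 0$ in the estimate above on any $A'\subset\subset A$; Fatou's lemma yields
\[
\int_{A'}|u(x+h)-u(x)|^p\,dx\le C|h|^p,
\]
with $C$ independent of $h$ and of $A'$. The classical finite-difference characterization of Sobolev functions (e.g.\ Brezis, Proposition~9.3) then gives $u\in W^{1,p}(A';\rr^m)$ with $\|\nabla u\|_{L^p(A';\rr^m)}$ bounded uniformly in $A'$. Exhausting $A$ and combining with the $L^p$ bound on $u$ finally produces $u\in W^{1,p}(A;\rr^m)$.

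The main obstacle is the apparent blow-up of the factor $|\xi|^p+1$ in Proposition~\ref{boundlemma} when one rescales $\xi=h/\e$. The resolution is that the scaling built into the finite-difference operator $D_\e^\xi$ itself is designed to cancel this blow-up after multiplication by $\e^p$, yielding exactly the Sobolev-type estimate $\|u_\e(\cdot+h)-u_\e(\cdot)\|_{L^p}\le C|h|$ (up to a vanishing error $\e^p$). The secondary technical point is verifying the admissibility condition \eqref{eps-small} for the rescaled choice $\xi=h/\e$, namely that $A'+B_{\e r+|h|}$ sits well inside $A$; this is automatic once $A'\subset\subset A$ is fixed and $h,\e$ are taken small enough depending on $\dist(A',\partial A)$.
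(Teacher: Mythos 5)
The paper does not prove this result: it is cited verbatim from [AABPT, Theorem~4.2], so there is no in-paper argument against which to compare your proposal. That said, your proof is correct in substance. The central observation --- choosing $\xi = h/\e$ in Proposition~\ref{boundlemma} so that $\e\xi = h$, then multiplying by $\e^p$ to cancel the apparent singularity and obtain $\int_{A'}|u_\e(x+h)-u_\e(x)|^p\,dx \le C\bigl(|h|^p+\e^p\bigr)\,G_\e^{r,p}(u_\e,A)$ --- is exactly the Kolmogorov--Riesz translation estimate, and passing to the limit and invoking the finite-difference characterization of $W^{1,p}$ correctly identifies the limit.

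Two points deserve more care. First, the bound $C(|h|^p+\e^p)$ does \emph{not} tend to zero as $|h|\to 0$ uniformly in $\e$; it is small only in the joint regime $|h|,\e\to 0$. This is enough because for a sequence $\e_j\to 0$ only finitely many indices satisfy $\e_j\ge\delta$, and any finite family of $L^p$ functions trivially has equicontinuous translations --- but this needs to be said, otherwise the appeal to Kolmogorov--Riesz as stated is not quite legitimate. Second, diagonal extraction over an exhaustion $A'_n\nearrow A$ delivers only $L^p_{\rm loc}(A)$ compactness. Upgrading to compactness in $L^p(A)$ is \emph{not} a consequence of diagonalization: it genuinely requires the Lipschitz regularity of $\partial A$ and the extension operator (the content of [AABPT, Theorem~4.1] underlying Corollary~\ref{boundlemma-lip}), namely one extends $u_\e$ to a larger domain $\tilde A\supset\supset A$ with comparable $G$-energy and $L^p$-norm, then applies your translation estimate on $A$ itself with ambient domain $\tilde A$, so that the Kolmogorov criterion holds up to $\partial A$. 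You mention the extension operator in passing, but the phrasing may suggest it is a convenience rather than an indispensable ingredient; the compactness would fail for a general bounded open $A$ without boundary regularity.
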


\section{Gagliardo-Nirenberg-Sobolev type inequality}
In this section we state and prove a crucial result for our analysis which may be of independent interest and resembles the classical Gagliardo-Nirenberg-Sobolev inequality in Sobolev spaces. Its  proof follows the lines of the proof of the corresponding result in Sobolev spaces. Such a result will allow us to prove the convergence of the infimum problems defining the approximating capacitary energy densities (see Proposition \ref{capterm}). 

\noindent For $r, \sigma>0$, set
$$
PC_\sigma(\rr^d;\rr^m):=\{u:\rr^d\to\rr^m:\ u \ \mbox{is constant on} \ \sigma k+[0,\sigma)^d\ \forall\ k\in\ZZ^d\}
$$
and, given $p\ge 1$, let $T_\e: L^p(\rr^d;\rr^m)\to PC_{\tilde{r}\e}(\rr^d;\rr^m)$ be defined by
\begin{equation}\label{intcost}
T_\e u(x):=\frac{1}{(\tilde{r}\e)^d}\int_{\tilde{r}\e k+[0,\tilde{r}\e)^d} u(y)\, dy\quad \mbox{on}\ \tilde{r}\e k+[0,\tilde{r}\e)^d,\ k\in\ZZ^d, 
\end{equation}
where
$$
{\tilde r}:=\frac{r}{\sqrt{d+3}}. 
$$ 
In the following result we extend the definition of $G_\e^{r,p}((u,\rr^d)$ in \eqref{Gfunctional} for $p=1$. 

\begin{theorem}[\bf Gagliardo-Nirenberg-Sobolev type inequality]\label{GNS}
Let $p\in[1,d)$. Then there exists a constant $C=C(p,d, r)>0$ such that for every $u\in L^p(\rr^d;\rr^m)$
\begin{equation}\label{GNSineq}
\left(\int_{\rr^d} |T_\e u(x)|^{p^*}\, dx\right)^{\frac{p}{p^*}}\leq C G^{r,p}_\e(u,\rr^d),
\end{equation}
where $p^\ast:=\frac{pd}{d-p}$.
\end{theorem}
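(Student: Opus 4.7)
The plan is to reduce the estimate to a discrete inequality on the grid $\eta\ZZ^d$ with $\eta:=\tilde r \e$, where $T_\e u$ is naturally encoded by the sequence $v_k:=T_\e u$ on the cell $Q_k:=\eta k+[0,\eta)^d$, and then to mimic in a discrete form the classical proof of Gagliardo--Nirenberg--Sobolev based on the Loomis--Whitney inequality. The choice of the prefactor $\tilde r=r/\sqrt{d+3}$ will show up precisely at the step where neighbouring-cell differences are expressed via $D_\e^\xi u$.

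\emph{Step 1 (control of neighbouring cell differences).} For each $i\in\{1,\dots,d\}$ I will write
\[
v_k-v_{k-e_i}=\frac{1}{\eta^{2d}}\int_{Q_k}\!\int_{Q_{k-e_i}}\bigl[u(y)-u(z)\bigr]\,dz\,dy,
\]
apply Jensen's inequality to raise the modulus to the $p$-th power, and change variable $\xi=(y-z)/\e$. The key geometric observation is that for $(y,z)\in Q_k\times Q_{k-e_i}$ one has $\xi\in \tilde r e_i+\tilde r(-1,1)^d\subset B_r$, because every point of that cube has Euclidean norm strictly less than $\tilde r\sqrt{d+3}=r$. Summing over $k\in\ZZ^d$ and bounding the integration over $\xi$ by $B_r$ (using symmetry of $B_r$ to replace $-\xi$ by $\xi$) will yield
\[
\sum_{k\in\ZZ^d}|v_k-v_{k-e_i}|^p\le \frac{\e^{d+p}}{\eta^{2d}}\,G_\e^{r,p}(u,\rr^d).
\]

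\emph{Step 2 (discrete Loomis--Whitney).} For any nonnegative sequence $(w_k)_{k\in\ZZ^d}$, telescoping along the $i$-th coordinate line gives
\[
w_k\le a_i(k_{\hat i}):=\sum_{j_i\in\ZZ}|w_{(k_1,\dots,j_i,\dots,k_d)}-w_{(k_1,\dots,j_i-1,\dots,k_d)}|,
\]
where $a_i$ depends only on the coordinates different from the $i$-th. Hence $w_k^{d/(d-1)}\le\prod_i a_i(k_{\hat i})^{1/(d-1)}$, and the discrete Loomis--Whitney/generalized Hölder inequality yields
\[
\Bigl(\sum_{k}w_k^{d/(d-1)}\Bigr)^{(d-1)/d}\le \prod_{i=1}^d\Bigl(\sum_{k}|w_k-w_{k-e_i}|\Bigr)^{1/d}.
\]

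\emph{Step 3 (bootstrap when $p>1$).} Apply Step 2 to $w_k:=|v_k|^\gamma$ with $\gamma:=p(d-1)/(d-p)$, so that $\gamma d/(d-1)=p^*$ and $(\gamma-1)p'=p^*$. Using $\bigl||a|^\gamma-|b|^\gamma\bigr|\le \gamma(|a|+|b|)^{\gamma-1}|a-b|$ and Hölder with exponents $p'$ and $p$ one obtains
\[
\sum_k\bigl||v_k|^\gamma-|v_{k-e_i}|^\gamma\bigr|\le C(\gamma)\Bigl(\sum_k|v_k|^{p^*}\Bigr)^{1/p'}\Bigl(\sum_k|v_k-v_{k-e_i}|^p\Bigr)^{1/p}.
\]
Plugging this into Step 2, observing that $(d-1)/d-1/p'=1/p^*$, and solving for $\bigl(\sum|v_k|^{p^*}\bigr)^{1/p^*}$ gives
\[
\Bigl(\sum_k|v_k|^{p^*}\Bigr)^{p/p^*}\le C(p,d)\prod_{i=1}^d\Bigl(\sum_k|v_k-v_{k-e_i}|^p\Bigr)^{1/d}.
\]

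\emph{Step 4 (assembly).} Apply AM--GM to the product over $i$, insert the bound from Step~1, and multiply by $\eta^{dp/p^*}$: since $\eta^d\sum_k|v_k|^{p^*}=\|T_\e u\|_{L^{p^*}}^{p^*}$ and $dp/p^*-2d=-(d+p)$, the combined factor is $(\e/\eta)^{d+p}=\tilde r^{-(d+p)}$, giving
\[
\|T_\e u\|_{L^{p^*}(\rr^d)}^p\le \frac{C(p,d)}{\tilde r^{d+p}}\,G_\e^{r,p}(u,\rr^d),
\]
which is the desired inequality with $C=C(p,d,r)$. The case $p=1$ follows by applying Step~2 directly with $w_k=|v_k|$ and combining with Step~1 at $p=1$, bypassing the bootstrap.

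\emph{Main obstacle.} The delicate point is Step~1: one must carry out the change of variables $\xi=(y-z)/\e$ and verify that the resulting $\xi$-domain is contained in $B_r$. The seemingly ad hoc constant $\tilde r=r/\sqrt{d+3}$ is dictated exactly by the computation that a cube of side $2\tilde r$ centred at $\tilde r e_i$ has circumradius $\tilde r\sqrt{d+3}$; this is why choosing that prefactor lets us absorb the long-range contributions into $G_\e^{r,p}$ while keeping all integrations within $B_r$.
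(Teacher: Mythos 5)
Your proposal is correct and follows the same underlying strategy as the paper (a discrete Loomis--Whitney inequality combined with the classical bootstrap via the auxiliary power $\gamma=\frac{d-1}{d}p^*$), but you execute it in a cleaner, fully discrete framework. The paper works with the continuous functional $G_\e^{r,p}$ throughout: it first establishes the $p=1$ case by bounding $\e^{d-1}|T_\e u(\e k)|$ by $G_\e^{r,1}(u,S^\e_{\hat k_j})$ on each coordinate stripe, bootstraps by applying this inequality to $|T_\e u|^\gamma$, and then needs a separate Jensen estimate to pass from $G_\e^{r,p}(T_\e u,\rr^d)$ back to $G_\e^{r,p}(u,\rr^d)$. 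Your Step 1 replaces that architecture with a single direct bound of the nearest-neighbour differences $\sum_k|v_k-v_{k-e_i}|^p$ by $\frac{\e^{d+p}}{\eta^{2d}}G_\e^{r,p}(u,\rr^d)$, after which Steps 2--4 are a self-contained discrete Sobolev inequality and the final transfer step in the paper becomes unnecessary. The geometric check that $\tilde r e_i+\tilde r(-1,1)^d\subset B_r$ via the circumradius $\tilde r\sqrt{d+3}=r$ is exactly right, and the bookkeeping of the powers of $\e$ and $\eta$ in Step 4 closes correctly. (Minor aside: you do not actually need the symmetry of $B_r$ in Step 1 --- the change of variables already lands $\xi$ in the one-sided cube, which lies in $B_r$.)

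There is one small gap to patch: the telescoping identity $w_k\le a_i(\hat k_i)$ in Step 2 and the division by $\bigl(\sum_k|v_k|^{p^*}\bigr)^{1/p'}$ in Step 3 both presuppose that the sequence $(v_k)$ decays and has finite $\ell^{p^*}$ norm \emph{a priori}, neither of which is automatic for a general $u\in L^p(\rr^d;\rr^m)$. As in the paper, you should begin by reducing to compactly supported $u$ (using density in $L^p$ and the $L^p$-continuity of $G_\e^{r,p}(\cdot,\rr^d)$); then $(v_k)$ has finite support, all sums in Steps 2--4 are finite, and the manipulations are justified. With that preliminary reduction in place, your argument is complete.
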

\begin{proof}
By a density argument and the  $L^p$-continuity of $G_\e^{r,p}(\cdot,\rr^d)$, it is enough to prove the inequality  \eqref{GNSineq} for $u\in L^p(\rr^d;\rr^m)$ with compact support.

\noindent Let us first consider the case $p=1$ and fix such a $u$.  
We introduce some notation. For $k=(k_1,\dots,k_d)\in\ZZ^d$, set
\[
Q_{k}^\varepsilon:=\tilde{r}\e k+\left[0,\tilde{r}\varepsilon\right)^d
\]
and, for every $j=1,\dots,d$,  let $\hat k_j\in\ZZ^{d-1}$ defined by $\hat k_j=(k_1,\dots,k_{j-1},k_{j+1},\dots, k_d)$. Moreover, with fixed $k\in\ZZ^d$ and $j=1,\dots,d$, set $\hat k_j(h):=(k_1,\dots,k_{j-1},h,k_{j+1},\dots,k_d)$ for every $h\in\ZZ$, 
and denote by $x_{h}$ an independent variable lying in the cube $Q_{\hat k_j(h)}^\e$. In particular, with the notation above, we may write $ u(x_{k_1})=\sum\limits_{h=-\infty}^{k_1}(u(x_{h})-u(x_{h-1})),$
\ being actually the latter a finite sum by the compactness of the support of $u$. Thus integrating in all the variables  $x_{h}$ with $h\leq k_1$, we get
\[
\left|\int_{Q_k^\varepsilon}u(x_{k_1})\,dx_{k_1}\right|\le \frac{1}{(\tilde{r}\e)^d}\sum_{h=-\infty}^{k_1}\int_{Q_{\hat k_1(h)}^\e}\int_{Q_{\hat k_1(h-1)}^\e}|u(x_{h})-u(x_{h-1})|\,dx_{h-1}\,dx_{h},
\]
and, using the definition of $T_\e$ given in \eqref{intcost}, 
\[
\e^{d-1}|T_\e u(\e k)|\le C\frac{1}{\e^d}\sum_{h=-\infty}^{k_1}\int_{Q_{\hat k_1(h)}^\e}\int_{Q_{\hat k_1(h)}^\e}\left|\frac{u(x_{h})-u(x_{h-1})}{\e}\right|\,dx_{h-1}\,dx_{h},
\]
where $C=C(r,d)$. 
For every $j=1,\cdots,d$, we define the stripe $S^\e_{{\hat k}_j}=\{(x_1,\cdots, x_d)\in\rr^d :  {\tilde r}_0\e k_i\le x_i<{\tilde r}_0\e (k_i+1), \forall i\neq j\}$, thus we have 
\[
\e^{d-1}|T_\e u(\e k)|\le C\,G_\e^{r,1}(u,S^\e_{{\hat k}_1}).
\]
Analogously, for $j=2,\cdots,d$,
\[
\e^{d-1}|T_\e u(\e k)|\le C\,G_\e^{r,1}(u,S^\e_{{\hat k}_j}),
\]
which in turn implies
\[
\e^{d(d-1)}|T_\e u(\e k)|^d\le C\prod_{j=1}^d G_\e^{r,1}(u,S^\e_{{\hat k}_j}),
\]
or, equivalently,
\[
\e^{d}|T_\e u(\e k)|^{\frac{d}{d-1}}\le C\prod_{j=1}^d \left(G_\e^{r,1}(u,S^\e_{{\hat k}_j})\right)^{\frac{1}{d-1}}.
\]
Summing over $k_1\in\ZZ$ and using H\"older's inequality, we get
\[
\begin{split}
\sum_{k_1\in\ZZ}\e^{d}|T_\e u(\e k)|^{\frac{d}{d-1}}&\le  C\left(G_\e^{r,1}(u,S^\e_{{\hat k}_1})\right)^{\frac{1}{d-1}}\sum_{k_1\in\ZZ}\prod_{j=2}^d \left(G_\e^{r,1}(u,S^\e_{{\hat k}_j})\right)^{\frac{1}{d-1}}\\
&\le C\left(G_\e^{r,1}(u,S^\e_{{\hat k}_1})\right)^{\frac{1}{d-1}}\prod_{j=2}^d\left(\sum_{k_1\in\ZZ}G_\e^{r,1}(u,S^\e_{{\hat k}_j})\right)^{\frac{1}{d-1}}.
\end{split}
\]
 Next we sum over $k_2\in\ZZ$ and we use again H\"older's inequality, obtaining
\[
\begin{split}
&\sum_{k_2\in\ZZ}\sum_{k_1\in\ZZ}\e^{d}|T_\e u(\e k)|^{\frac{d}{d-1}}\\
&\le C\left(\sum_{k_1\in\ZZ}G_\e^{r,1}(u,S^\e_{{\hat k}_2})\right)^{\frac{1}{d-1}}\sum_{k_2\in\ZZ}\left(G_\e^{r,1}(u,S^\e_{{\hat k}_1})\prod_{j=3}^d\sum_{k_1\in\ZZ}G_\e^{r,1}(u,S^\e_{{\hat k}_j})\right)^{\frac{1}{d-1}}\\
&\le C \left(\sum_{k_1\in\ZZ}G_\e^{r,1}(u,S^\e_{{\hat k}_2})\right)^{\frac{1}{d-1}}\left(\sum_{k_2\in\ZZ}G_\e^{r,1}(u,S^\e_{{\hat k}_1})\right)^{\frac{1}{d-1}}\prod_{j=3}^d\left(\sum_{k_2\in\ZZ}\sum_{k_1\in\ZZ}G_\e^{r,1}(u,S^\e_{{\hat k}_j})\right)^{\frac{1}{d-1}}.
\end{split}
\]
We iterate the procedure, finding out
\[
\begin{split}
\sum_{i=1}^d\sum_{\substack{k_i\in\ZZ}}\e^d|T_\e u(\e k)|^{\frac{d}{d-1}}\le  C\prod_{j=1}^d\left(\sum_{\substack{ i=1\\i\neq j}}^d\sum_{\substack{k_i\in\ZZ }}G_\e^{r,1}(u,S^\e_{{\hat k}_j})\right)^{\frac{1}{d-1}}\le C\left(G_\e^{r,1}(u,\rr^d)\right)^{\frac{d}{d-1}}.
\end{split}
\]
On the other hand, we have 
\[
\int_{\rr^d}|T_\e u(y)|^{1^*}dy=\sum_{k\in\ZZ^d}\int_{Q_k^\e}|T_\e u(y)|^{1^*}dy=\sum_{i=1}^d\sum_{\substack{k_i\in\ZZ}}(\tilde{r}\e)^d|T_\e u(\e k)|^{\frac{d}{d-1}};
\]
therefore
\begin{equation}\label{sgnp1}
\left(\int_{\rr^d}|T_\e u(y)|^{1^*}dy\right)^{\frac{1}{1^*}}\le C\,G_\e^{r,1}(u,\rr^d).
\end{equation}
Thus the theorem is proven for $p=1$. 
\smallskip

If $1<p<d$, given $u\in L^p((\rr^d;\rr^m)$ with compact support, we use  \eqref{sgnp1} with $|T_\e u|^\gamma$  in place of $u$, for some $\gamma>1$ to be chosen later. We obtain
\[
\begin{split}
&\left(\int_{\rr^d}|T_\e u(y)|^{\gamma1^*}dy\right)^{\frac{1}{1^*}}\le C\int_{B_{r}}\int_{\rr^d}\left|\frac{|T_\e u(x+\e\xi)|^\gamma-|T_\e u(x)|^\gamma}{\e}\right|\,dx\,d\xi\\
&\le C\int_{B_{r}}\int_{\rr^d}(|T_\e u(x+\e\xi)|^{\gamma-1}+|T_\e u(x)|^{\gamma-1})\left|\frac{T_\e u(x+\e\xi)-T_\e u(x)}{\e}\right|\,dx\,d\xi, 
\end{split}
\]
where now $C$ depends on $\gamma,d,r$. By using H\"older's inequality with $p$ and $p'=\frac{p}{p-1}$, we also get
\[
\begin{split}
\left(\int_{\rr^d}|T_\e u(y)|^{\gamma1^*}dy\right)^{\frac{1}{1^*}}\le C&\left(\int_{B_{r}}\int_{\rr^d}(|T_\e u(x+\e\xi)|^{(\gamma-1)p'}+|T_\e u(x)|^{(\gamma-1)p'})\,dx\,d\xi\right)^{\frac{1}{p'}}\\
&\quad \times G_\e^{r,p}(T_\e u,\rr^d)^{\frac{1}{p}}\le C\left(\int_{\rr^d}|T_\e u(x)|^{(\gamma-1)p'}dx\right)^{\frac{1}{p'}}G_\e^{r,p}(T_\e u,\rr^d)^{\frac{1}{p}},
\end{split}
\]
possibly for a different constant $C=C(\gamma,p,d,r)$.
Choose $\gamma$ so that $\gamma 1^*=\gamma\frac{d}{d-1}=(\gamma-1)p'$, and accordingly $\gamma=\frac{d-1}{d}p^*$. Thus
\[
\left(\int_{\rr^d}|T_\e u(y)|^{p^*}dy\right)^{\frac{p}{p^*}}\le C\,G_\e^{r,p}(T_\e u,\rr^d).
\]
We now observe that
\[
\begin{split}
G_\e^{r,p}(T_\e u,\rr^d)&\le C\int_{B_{r}}\int_{\rr^d}\left|\frac{T_\e u(x)-u(x)}{\e}\right|^pdx\,d\xi+C\int_{B_{r}}\int_{\rr^d}\left|\frac{T_\e u(x+\e\xi)-u(x+\e\xi)}{\e}\right|^p dx\,d\xi\\
&+C\int_{B_{r}}\int_{\rr^d}\left|\frac{ u(x+\e\xi)-u(x)}{\e}\right|^p dx\,d\xi,
\end{split}
\]
the last term being, up to a multiplicative constant, the functional $G_\e^{r,p}(u,\rr^d)$. 
The first and the second term on the right hand side may be estimated as follows
\[
\begin{split}
\int_{B_{r}}\int_{\rr^d}\left|\frac{T_\e u(x)-u(x)}{\e}\right|^pdx\,d\xi&=|B_{r}|\int_{\rr^d}\left|\frac{T_\e u(x)-u(x)}{\e}\right|^pdx\\
&=|B_{r}|\sum_{k\in\ZZ^d}\int_{Q^\e_{k}}\left|\mint_{Q^\e_{k}}\frac{u(y)-u(x)}{\e}\,dy\right|^pdx\\
&\le |B_{r}|\sum_{k\in\ZZ^d}\int_{Q^\e_{k}}\mint_{Q^\e_{k}}\left|\frac{u(y)-u(x)}{\e}\right|^pdy\,dx\\
&=C \sum_{k\in\ZZ^d}\frac{1}{\e^d}\int_{Q^\e_{k}}\int_{Q^\e_{k}}\left|\frac{u(y)-u(x)}{\e}\right|^pdy\,dx\le C\,G_\e^{r,p}(u,\rr^d),
\end{split}
\]
where we have used Jensen's inequality.  This concludes the proof.
\end{proof}
\noindent By Theorem \ref{GNS} and Theorem \ref{kolcom}, we deduce the following compactness result.

\begin{corollary}\label{compactnessloc}

Let $p\in (1,d)$ and let $\{u_\e\}_\e\subset L^p(\rr^d;\mathbb{R}^m)$ be such that for some $r>0$

\begin{itemize}
\item[(i)] $\displaystyle\sup_{\e>0}\| u_\e\|_{L^p(K;\rr^m)}<+\infty$ for every compact set $K\subset \rr^d$;
\item[(ii)]$\displaystyle\sup_{\e>0}\, G_\e^{r,p}(u_\e,\rr^d)<+\infty$.

\end{itemize}
Then, for any  $\e_j\to 0$, $\{u_{\e_j}\}_j$ is relatively compact in $L^p_{\loc}(\rr^d;\mathbb{R}^m)$ and every limit of a converging subsequence lies  in $W^{1,p}_{\loc}(\rr^d;\mathbb{R}^m)\cap 
L^{p^*}(\rr^d;\rr^m)$.

\end{corollary}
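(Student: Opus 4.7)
The plan is to combine Theorem \ref{kolcom}, applied on an exhausting family of balls, with the Gagliardo--Nirenberg--Sobolev inequality of Theorem \ref{GNS}, so as to capture simultaneously the local $W^{1,p}$ regularity and the global $L^{p^*}$ integrability of the limit.

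First, for each $R>0$ I would apply Theorem \ref{kolcom} on the ball $B_R$: hypothesis (i) yields the uniform bound on $\|u_{\e_j}\|_{L^p(B_R;\rr^m)}$, while $G_{\e_j}^{r,p}(u_{\e_j}, B_R)\le G_{\e_j}^{r,p}(u_{\e_j},\rr^d)$ is bounded by (ii). A standard diagonal extraction along a sequence $R_n\uparrow +\infty$ then produces a subsequence, still labelled $u_{\e_j}$, and a function $u \in W^{1,p}_{\loc}(\rr^d;\rr^m)$ such that $u_{\e_j} \to u$ in $L^p_{\loc}(\rr^d;\rr^m)$.

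To capture the $L^{p^*}$ regularity of $u$, I would appeal to Theorem \ref{GNS} to get
\[
\|T_{\e_j} u_{\e_j}\|_{L^{p^*}(\rr^d;\rr^m)}^p \le C\, G_{\e_j}^{r,p}(u_{\e_j},\rr^d) \le C,
\]
so that, up to a further subsequence, $T_{\e_j} u_{\e_j} \rightharpoonup w$ in $L^{p^*}(\rr^d;\rr^m)$ for some $w \in L^{p^*}(\rr^d;\rr^m)$.

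The main obstacle is to reconcile this global bound on the piecewise constant interpolants with the merely local information on $u$, by identifying $w$ with $u$. The key is to show that $T_{\e_j}u_{\e_j} - u_{\e_j} \to 0$ in $L^p_{\loc}(\rr^d;\rr^m)$, via a quantitative estimate already embedded in the proof of Theorem \ref{GNS}. Since $T_{\e_j}u_{\e_j}$ is constant on each cube $Q^{\e_j}_k$ of side $\tilde r\e_j$, a cube-by-cube application of Jensen's inequality, combined with the observation that any $x,y$ lying in the same cube satisfy $y-x=\e_j\xi$ with $|\xi|\le \tilde r\sqrt d<r$, yields
\[
\|T_{\e_j}u_{\e_j} - u_{\e_j}\|_{L^p(\rr^d;\rr^m)}^p \le C\, \e_j^p\, G_{\e_j}^{r,p}(u_{\e_j},\rr^d) \le C\e_j^p.
\]
This forces $T_{\e_j}u_{\e_j} \to u$ in $L^p_{\loc}(\rr^d;\rr^m)$; by uniqueness of limits, the weak $L^{p^*}$ limit $w$ must then coincide with $u$ almost everywhere, giving $u\in L^{p^*}(\rr^d;\rr^m)$ as required.
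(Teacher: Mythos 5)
Your proof is correct and follows essentially the same structure as the paper's: apply Theorem~\ref{kolcom} on an exhausting sequence of balls and diagonalize to get $L^p_{\loc}$ relative compactness with limit in $W^{1,p}_{\loc}$, then show $T_{\e_j}u_{\e_j}\to u$ in $L^p_{\loc}$, and finally use the uniform GNS bound on $\|T_{\e_j}u_{\e_j}\|_{L^{p^*}}$ to put $u$ in $L^{p^*}$. The only difference is the closing step: the paper passes to a further subsequence converging pointwise a.e.\ and applies Fatou's lemma in \eqref{GNSineq}, whereas you invoke weak sequential compactness of bounded sets in the reflexive space $L^{p^*}$ and then identify the weak limit with $u$ by testing against $C^\infty_c$ functions; both are standard and equivalent here. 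A minor remark in your favor: the paper merely asserts (with a citation) that $T_{\e_j}u_{\e_j}-u_{\e_j}\to 0$ in $L^p_{\loc}$, while you supply the explicit quantitative bound $\|T_\e u - u\|_{L^p(\rr^d)}^p \le C(r,d)\,\e^p\,G_\e^{r,p}(u,\rr^d)$, which indeed follows from Jensen's inequality exactly as in the last display of the proof of Theorem~\ref{GNS} (after noting that the diagonal of each cube $Q^\e_k$ has length $\tilde r\e\sqrt d<r\e$).
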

\begin{proof}
Theorem \ref{kolcom} yields that for any bounded  open Lipschitz set  $A\subset \rr^d$   $\{u_{\e_j}\}_j$ is relatively compact  in $L^p(A;\rr^m)$ and any limit point lies in $W^{1, p}(A;\rr^m)$.  By a standard diagonalization argument,    $\{u_j\}_j$  is also relatively compact in $L^p_{\loc} (\rr^d; \rr^m)$ and any limit point lies in $ W^{1,p}_{\loc}(\rr^d;\mathbb{R}^m)$. Let us consider a subsequence (not relabelled)  $u_{\e_j}$ and $u\in W^{1,p}_{\loc}(\rr^d;\mathbb{R}^m)$ such that $u_j\to u$ strongly in $L^p_{\loc} (\rr^d; \rr^m)$ and pointwise in $\rr^d$. Then it can be proved that also $T_{\e_j} u_{\e_j}\to u$  strongly  in $L^p_{\loc} (\rr^d; \rr^m)$ and pointwise in $\rr^d$ (see also \cite[Lemma 2.11]{AFG} ). Thus, it is enough  to apply  Fatou's Lemma in \eqref{GNSineq} and use  hypothesis $(ii)$ to deduce that $u\in L^{p^*}(\rr^d;\rr^m)$.
\end{proof}

\section{Supporting results}\label{supporting}

In this section we present some key results of technical flavor that we are later going to use for the proof of Theorem \ref{th:main} and Theorem \ref{th:mainNL}.


\subsection{A joining lemma}\label{join}

Here we state and prove the analog of Lemma 3.1 in \cite{AB} for our non local  functionals and we follow the lines of its proof. It allows to restrict our attention in our $\Gamma$-convergence analysis to sequences of converging functions that are constant on suitable annuli surrounding the perforations.

\begin{lemma}\label{raccordo}
Let $\delta_j\to 0$ as $j\to +\infty$. Let $T\geq r_0$ be fixed and let $0<\varepsilon_j<\rho_j<\frac{\delta_j}{2}$ with $\e_j=o(\rho_j)$ as $j\to +\infty$. Let  $u_j$ converge to $u$ in $L^p(\Omega,\mathbb{R}^m)$ with $\sup_j \mathcal {F}_{\e_j}^T(u_j)<+\infty$.
Set
\[
Z_j(\Omega):=\{i\in\mathbb{Z}^d : \dist(i\delta_j,\mathbb{R}^d\setminus\Omega)>\delta_j\}
\]
and, for $h\in\NN$ and $i\in Z_j(\Omega)$, set
\begin{equation}\label{anelli}
A_j^{i,h}:=\{x\in\Omega : 2^{-h-1}\rho_j<|x-i\delta_j|<2^{-h}\rho_j\},
\end{equation}
\begin{equation}\label{medianelli}
u_j^{i,h}:=\mint_{A_j^{i,h}}u_j\ \ \ \ \ \ \rho_{j,h}:=\frac{3}{4}2^{-h}\rho_j.
\end{equation}
 Then, given  $N\in\NN$, for every $i\in Z_j(\Omega)$ there exists $k_i\in\{0,\cdots, N-1\}$ and a sequence $w_j$ still converging to $u$ in $L^p(\Omega,\rr^m)$, such that for $j$ sufficiently large
\begin{eqnarray}
&w_j=u_j \hbox{ on }\Omega\setminus\cup_{i\in Z_j(\Omega)}A_j^{i,k_i},\label{wjdef}\\
&w_j=u_j^{i,k_i} \hbox{ on }{\partial} B_{\rho_{j,k_i}}(i\delta_j)+B_{T \e_j},\label{wjdef2}\\
&\left |\mathcal {F}_{\e_j}^T(w_j)-\mathcal {F}_{\e_j}^T(u_j)\right|\le\displaystyle \frac{C}{N}.\label{energie}
\end{eqnarray}

\end{lemma}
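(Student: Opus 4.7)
The plan is to adapt the classical De~Giorgi slicing/averaging argument (Lemma~3.1 in \cite{AB}) to our nonlocal setting. For each center $i\in Z_j(\Omega)$ the $N$ annuli $A_j^{i,0},\dots,A_j^{i,N-1}$ are pairwise disjoint and sit inside the disjoint balls $B_{\rho_j}(i\delta_j)\subset B_{\delta_j/2}(i\delta_j)$, so a pigeonhole argument in $h$ lets us pick $k_i\in\{0,\dots,N-1\}$ on which a suitable localized energy of $u_j$ is at most $C/N$ of the total (uniformly bounded) energy. The sought $w_j$ is then constructed by interpolating between $u_j$ and the average $u_j^{i,k_i}$ through a smooth radial cutoff $\phi_i$ that vanishes outside $A_j^{i,k_i}$ and equals one on $\partial B_{\rho_{j,k_i}}(i\delta_j)+B_{T\e_j}$. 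The assumption $\e_j=o(\rho_j)$ is crucial: it guarantees that for $j$ large the $T\e_j$-thickenings of the different annuli remain essentially disjoint and that each transition strip (of width $\sim 2^{-k_i-2}\rho_j$) dominates the nonlocal range~$T\e_j$.

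More precisely, I set $E_j^{i,h}:=\mathcal{F}_{\e_j}^T(u_j,A_j^{i,h}+B_{2T\e_j})$. Because $T\e_j\ll 2^{-N-1}\rho_j$ for $j$ large, each point belongs to at most a uniformly bounded number $C_0$ of these enlarged annuli as $h$ varies, and therefore $\sum_{i,h}E_j^{i,h}\le C_0\,\mathcal{F}_{\e_j}^T(u_j)\le C$. Averaging in $h$ produces indices $k_i$ with $\sum_i E_j^{i,k_i}\le C/N$. Choosing $\phi_i$ as above with $\|\nabla\phi_i\|_\infty\le C/\rho_{j,k_i}$ (possible because each transition strip is of width $\sim 2^{-k_i-2}\rho_j\gg T\e_j$), and setting $w_j:=u_j^{i,k_i}+(1-\phi_i)(u_j-u_j^{i,k_i})$ on each $A_j^{i,k_i}$ and $w_j:=u_j$ elsewhere, properties \eqref{wjdef} and \eqref{wjdef2} are immediate from the construction.

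To establish \eqref{energie}, the discrete product rule yields
\[
D_{\e_j}^\xi w_j(x)=(1-\phi_i(x+\e_j\xi))\,D_{\e_j}^\xi u_j(x)-D_{\e_j}^\xi\phi_i(x)\,(u_j(x)-u_j^{i,k_i}),
\]
so that $|D_{\e_j}^\xi w_j|^p\le C(|D_{\e_j}^\xi u_j|^p+|\xi|^p\rho_{j,k_i}^{-p}|u_j-u_j^{i,k_i}|^p)$ on $A_j^{i,k_i}+B_{T\e_j}$, while outside this region $w_j=u_j$. Combining the growth bound $f(\xi,z)\le M(\xi)|z|^p$ (Remark \ref{remcrescita}), hypothesis~(G1), Proposition \ref{boundlemma} applied with $r=r_0$, and (G0), the first term integrates to at most $CE_j^{i,k_i}$. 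For the second term, the scaled nonlocal Poincar\'e--Wirtinger inequality (Proposition \ref{pwscaling}) applied to the reference annulus $\{1/2<|x|<1\}$ with scaling $\lambda=2^{-k_i}\rho_j$ gives
\[
\rho_{j,k_i}^{-p}\int_{A_j^{i,k_i}}|u_j-u_j^{i,k_i}|^p\,dx\le C\,G_{\e_j}^{r_0,p}(u_j,A_j^{i,k_i})\le C\,E_j^{i,k_i},
\]
with $C$ independent of $k_i$. Summing the resulting bound $\mathcal{F}_{\e_j}^T(w_j,A_j^{i,k_i}+B_{T\e_j})\le CE_j^{i,k_i}$ over $i$ and using $\sum_i E_j^{i,k_i}\le C/N$ yields \eqref{energie}, while the same Poincar\'e--Wirtinger estimate gives $\|w_j-u_j\|_{L^p}^p\le C\rho_j^p\,\mathcal{F}_{\e_j}^T(u_j)\to 0$, so that $w_j\to u$ in $L^p(\Omega;\rr^m)$.

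The main obstacle is the need to absorb nonlocal interactions of range $T\e_j$ that straddle the selected middle sphere $\partial B_{\rho_{j,k_i}}(i\delta_j)$: unlike in the local case of \cite{AB}, where only gradient information at the slicing radius is involved, here a whole strip of pairs $(x,x+\e_j\xi)$ crosses the sphere, and both the modified and unmodified parts of $w_j$ interact across it. The thickening by $T\e_j$ in \eqref{wjdef2} and the scale hierarchy $T\e_j\ll 2^{-N}\rho_j$ produced by $\e_j=o(\rho_j)$ are exactly what allows this strip to be absorbed into the pigeonhole-small localized energy $E_j^{i,k_i}$.
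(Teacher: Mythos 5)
Your proposal is correct and follows essentially the same strategy as the paper's proof: a pigeonhole over the $N$ dyadic annuli (using finite overlap of slightly enlarged annuli, made possible by $\e_j=o(\rho_j)$), a cutoff interpolation between $u_j$ and its annular average, the discrete product rule to split the nonlocal difference quotient, the scaled Poincar\'e--Wirtinger inequality of Proposition \ref{pwscaling} to absorb the cutoff-gradient term, and Proposition \ref{boundlemma} to control the first term by $G^{r_0,p}_{\e_j}$ on the enlarged annulus. The only cosmetic difference is your choice to thicken by $B_{2T\e_j}$ where the paper uses the union of the three consecutive annuli $\tilde A_j^{i,h}$, but both yield the same bounded-overlap argument for $j$ large.
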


\begin{proof}
Notice that, by (\textbf{H}) and (G0), $\sup_j G^{r_0,p}_{\e_j}(u_j,\Omega)<+\infty$. Let $\varphi=\varphi_j^{i,h}\in C^\infty_c(A_j^{i,h})$ be such that $\varphi=1$ on $\partial B_{\rho_{j,k_i}}(i\delta_j)+B_{T \e_j}$ and $|D\varphi|\le \frac{C}{\rho_{j,h}}$ and define the function $w_j^{i,h}=\varphi u_j^{i,h}+(1-\varphi)u_j$. Adding and subtracting the quantity $\varphi(x+\e_j\xi) u_j(x)$ in the argument of $f$, and using (G1) and the convexity of the power function, we have

\[
\begin{split}
&\mathcal {F}_{\e_j}^T(w_j^{i,h}, A_j^{i,h})=\\
&=\int_{B_T}\int_{(A_j^{i,h})_{\e_j}(\xi)}f\left(\xi,\frac{(u_j^{i,h}\varphi+(1-\varphi)u_j)(x+\e_j\xi)-(u_j^{i,h}\varphi+(1-\varphi)u_j)(x)}{\e_j}\right)dx\,d\xi\\
&\le C \int_{B_T}M(\xi)\int_{(A_j^{i,h})_{\e_j}(\xi)}\left|\frac{u_j(x+\e_j\xi)-u_j(x)}{\e_j}\right|^p+\left |u_j^{i,h}-u_j(x)\right|^p\left|\frac{\varphi(x+\e_j\xi)-\varphi(x)}{\e_j}\right|^pdx\,d\xi.
\end{split}
\]
Recalling that $|D\varphi|\le \frac{C}{\rho_{j,h}}$ and using the Poincar\'e-Wirtinger  inequality  in Proposition \ref{pwscaling} with $x_0=i \delta_j$ and $\lambda = \frac{4}{3}\rho_{j,h}$, we have
\[
\begin{split}
&\int_{B_T}M(\xi)\int_{(A_j^{i,h})_{\e_j}(\xi)}\left |u_j^{i,h}-u_j(x)\right |^p\left|\frac{\varphi(x+\e_j\xi)-\varphi(x)}{\e_j}\right|^pdx\,d\xi\\
&\le \frac{C}{\rho_{j,h}^{\, p}}\int_{B_T}M(\xi)|\xi|^p\,d\xi\int_{A_j^{i,h}}\left |u_j^{i,h}-u_j(x)\right|^p\,dx\le C\,{G}^{r_0}_{\e_j}(u_j, A_j^{i,h}).
\end{split}
\]
On the other hand, taking into account that $\e_j=o(\rho_j)$ we deduce that,  for $j$ sufficiently large, it holds    
$$A_j^{i,h} +B_{\e_j(r_0+T)} \subset \bigcup_{\ell=h-1, h, h+1} A_j^{i,\ell}=:{\tilde A}_j^{i,h},  \quad h=0,\cdots ,N-1.
$$
In addition, for $j$ sufficiently large, thanks to the fact that $\dist({\tilde A}_j^{i,h},\Omega\setminus B_{\rho_j}(i\delta_j) )\sim \rho_j$, we also have 
$$\e_j<r_0^{-1}\dist({\tilde A}_j^{i,h},\Omega\setminus B_{\rho_j}(i\delta_j) ).$$
Therefore,  using  Proposition \ref{boundlemma}, we get 
\[
\begin{split}
 \int_{B_T}M(\xi)&\int_{(A_j^{i,h})_{\e_j}(\xi)}\left|\frac{u_j(x+\e_j\xi)-u_j(x)}{\e_j}\right|^p\,dx\,d\xi\\
 &\le C\,\int_{B_T}M(\xi)(|\xi|^p+1)\,d\xi G^{r_0,p}_{\e_j}(u_j,{\tilde A}_j^{i,h}) \\
 &\le C\,G^{r_0,p}_{\e_j}(u_j,{\tilde A}_j^{i,h}).
\end{split}
\]

\noindent Hence, the previous estimates yield that
\begin{equation}\label{stimanello}
\mathcal {F}_{\e_j}^T(w_j^{i,h}, A_j^{i,h})\le C\,G^{r_0,p}_{\e_j}(u_j,{\tilde A}_j^{i,h}).
\end{equation}
Since the sets ${\tilde A}_j^{i,h}$ overlap at most $3$ times,  with fixed $N\in\NN$,   we sum over $h=0,\cdots ,N-1$ and get 
\[
\sum_{h=0}^{N-1}G^{r_0,p}_{\e_j}(u_j,{\tilde A}_j^{i,h})\le 3 \, G^{r_0,p}_{\e_j}(u_j, B_{\rho_j}(i\delta_j)).
\]
Hence there exists $k_i\in\{0,\cdots, N-1\}$ such that
\[
{G}^{r_0,p}_{\e_j}(u_j, {\tilde A}_j^{i,k_i})\le \frac{3}{N} G^{r_0,p}_{\e_j}(u_j, B_{\rho_j}(i\delta_j)),
\]
which in turn yields
\[
\mathcal{F}_{\e_j}^T(w_j^{i,k_i},  A_j^{i,k_i})\le \frac{C}{N} G^{r_0,p}_{\e_j}(u_j, B_{\rho_j}(i\delta_j)).
\]
Noticing that estimate \eqref{stimanello} holds even if we replace $w_j^{i,h}$ with $u_j$, we get
\[
\begin{split}
\left |\mathcal{F}_{\e_j}^T(u_j, A_j^{i,k_i})-\mathcal{F}_{\e_j}^T(w_j^{i,k_i}, A_j^{i,k_i})\right|&\le \mathcal{F}_{\e_j}^T(u_j, A_j^{i,k_i})+\mathcal{F}_{\e_j}^T(w_j^{i,k_i}, A_j^{i,k_i})\\
&\le  \frac{C}{N} G^{r_0,p}_{\e_j}(u_j, B_{\rho_j}(i\delta_j)).
\end{split}
\]
\noindent Then \eqref{wjdef}, \eqref{wjdef2}\eqref{energie} are satisfied by $w_j$ defined as
\[
w_j(x):=
\begin{cases}u_j (x) &\text {if } x\in \Omega\setminus \cup_{i\in Z_j(\Omega)}A_j^{i,k_i},\cr\cr
\varphi_j^{i,k_i}(x) u_j^{i,k_i}(x)+(1-\varphi_j^{i,k_i}(x))u_j(x) & \text{if } x\in A_j^{i,k_i},\ i\in Z_j(\Omega).
\end{cases}
\]
\noindent We finally prove the convergence of $w_j$ to $u$ in $L^p(\Omega;\rr^m)$. We have
\[
\begin{split}
\int_\Omega \left|w_j-u_j\right|^p\,dx&=\sum_{i\in Z_j(\Omega)}\int_{ A_j^{i,k_i}}\left|\varphi_j^{i,k_i} u_j^{i,k_i}+(1-\varphi_j^{i,k_i})u_j-u_j\right|^p\,dx\\
&\le \,\sum_{i\in Z_j(\Omega)}\int_{A_j^{i,k_i}}|u_j^{i,k_i}-u_j|^p\,dx
\le C\,\sum_{i\in Z_j(\Omega)}(\rho_j^{k_i})^pG^{r_0,p}_{\e_j}(u_j,A_j^{i,k_i})\\
&\le C\,\rho_j^p\sum_{i\in Z_j(\Omega)}G^{r_0,p}_{\e_j}(u_j,A_j^{i,k_i})\le C\,\rho_j^p,
\end{split}
\]
where we used again Proposition \ref{pwscaling}  in the second line.
Hence, passing to the limit as $j$ tends to $+\infty$ we get the desired convergence.

\end{proof}
\subsection{Truncation Lemma}
By the composition with a  suitable lipschitz function, the following technical lemma allows us to replace  a given sequence $u_j$ with equibounded energies and $L^p$-norms, by  a new sequence  uniformly bounded in $L^\infty$ and with a small gap  in energy. The proof is strongly inspired by  Lemma 3.5 in \cite{BDFV}.

\begin{lemma}\label{truncation}
Let $\{u_j\}$  with $\sup_j(\mathcal{F}_{\varepsilon_j}(u_j) +\|u_j\|_{L^p(\Omega; \rr^m)})<+\infty$. Then for every $\eta > 0$ and 
$M> 1$ there exist $R_M>M>0$ and  a sequence of Lipschitz functions $\Phi_{j,M}: \rr^m\to\rr^m$ with $Lip(\Phi_{j,M})= 1$, $\Phi_{j,M} (z) = z$ if $|z| < M$ and $\Phi_{j,M}(z) = 0$ if $|z| > R_M$, such that
 it holds 
\[
\mathcal{F}_{\e_j}(\Phi_{j,M}(u_j))\le \mathcal{F}_{\e_j}(u_j)+\eta
\]
 for every $j\in \NN$ such that $\e_j<\e_0$, with $\e_0$ depending on $\Omega$. Moreover we can extract a subsequence $(j_k)$  such that $\Phi_{j_k,M}=:\Phi_M$ do not depend on $k\in\NN$.
\end{lemma}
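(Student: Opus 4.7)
The plan is to combine a pigeon-hole argument over a finite family of candidate $1$-Lipschitz truncations with the $p$-Lipschitz continuity assumption (L). Fix $\eta>0$ and $M>1$, and let $N\in\NN$ be a large integer to be chosen below. For $\ell=1,\ldots,N$ I construct $N$ radial $1$-Lipschitz maps $\Phi_\ell:\rr^m\to\rr^m$ that share the properties $\Phi_\ell(z)=z$ for $|z|\le M$ and $\Phi_\ell(z)=0$ for $|z|\ge R_M$, for a common radius $R_M$ depending only on $M$ and $N$. The $\Phi_\ell$ differ in the location of their transition zone: I take them to coincide with the identity on $\{|z|\le 2^{\ell-1}M\}$, to vanish on the complement of $\{|z|\le 2^\ell M\}$, and to be the radial linear interpolation between these two levels. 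This makes the transition annuli $\{2^{\ell-1}M<|z|<2^\ell M\}$ pairwise disjoint in $\rr^m$, and one may take $R_M:=2^N M$.

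For each $\ell$ and $j$, set $\mathcal{E}_\ell^j:=\mathcal{F}_{\e_j}(\Phi_\ell(u_j))-\mathcal{F}_{\e_j}(u_j)$. Since $\Phi_\ell$ is $1$-Lipschitz, $|D_{\e_j}^\xi(\Phi_\ell(u_j))|\le|D_{\e_j}^\xi u_j|$ pointwise, and the integrand $f(\xi,D_{\e_j}^\xi\Phi_\ell(u_j))-f(\xi,D_{\e_j}^\xi u_j)$ vanishes whenever both $|u_j(x)|$ and $|u_j(x+\e_j\xi)|$ lie below $2^{\ell-1}M$, and is non-positive whenever both lie above $2^\ell M$. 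On the remaining ``transition set'' $D_\ell$, hypothesis (L) together with $|D_{\e_j}^\xi(u_j-\Phi_\ell(u_j))|\le 2|D_{\e_j}^\xi u_j|$ yields
\[
\bigl[f(\xi,D_{\e_j}^\xi\Phi_\ell(u_j))-f(\xi,D_{\e_j}^\xi u_j)\bigr]^+\le CM(\xi)|D_{\e_j}^\xi u_j|^p\,\chi_{D_\ell}(x,\xi).
\]
Summing in $\ell$ and noting that each point $x$ lies in at most one of the preimage annuli $\{2^{\ell-1}M<|u_j(x)|<2^\ell M\}$, the ``annular'' part of $\sum_\ell\chi_{D_\ell}$ has multiplicity bounded by $2$. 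The remaining ``jump'' contributions, where one of $|u_j(x)|$, $|u_j(x+\e_j\xi)|$ lies below the identity threshold of $\Phi_\ell$ and the other above the zero threshold, are handled via the forced lower bound $|D_{\e_j}^\xi u_j|\ge 2^{\ell-1}M/\e_j$ on any such jump set.

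Combining these contributions with the universal estimate $\int_{\rr^d}\int_{\Omega_{\e_j}(\xi)}M(\xi)|D_{\e_j}^\xi u_j|^p\,dx\,d\xi\le C$, which follows from Corollary \ref{boundlemma-lip} and (G1) using the standing hypothesis $\sup_j(\mathcal{F}_{\e_j}(u_j)+\|u_j\|_{L^p(\Omega;\rr^m)}^p)<\infty$, we obtain $\sum_{\ell=1}^N\mathcal{E}_\ell^j\le C$ with $C$ independent of $j$ and $N$. Pigeon-hole then yields $\ell=\ell(j)\in\{1,\ldots,N\}$ with $\mathcal{E}_{\ell(j)}^j\le C/N$; choosing $N$ so that $C/N\le\eta$ completes the first assertion with $\Phi_{j,M}:=\Phi_{\ell(j)}$. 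Since $\ell(j)$ takes values in the finite set $\{1,\ldots,N\}$, a subsequence $(j_k)$ has $\ell(j_k)\equiv\ell^*$, and setting $\Phi_M:=\Phi_{\ell^*}$ yields the final claim.

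The principal obstacle I expect is the quantitative control of the jump part of $D_\ell$, where $u_j$ itself oscillates across the transition annulus of $\Phi_\ell$ and the naive Lipschitz-based bound risks over-counting. This is circumvented by observing that a single pair $(x,\xi)$ can participate in only logarithmically many jump sets $D_\ell$ (in terms of $\max(|u_j(x)|,|u_j(x+\e_j\xi)|)/M$), and that the enhanced lower bound on $|D_{\e_j}^\xi u_j|$ on any such set makes the corresponding contribution to the universal energy integral small enough to be absorbed into the pigeon-hole bound.
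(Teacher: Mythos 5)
Your proposal is correct and follows the same high-level strategy as the paper: construct a finite family of $1$-Lipschitz radial truncations with pairwise disjoint transition annuli, split the energy discrepancy into an ``annular'' part (controlled by disjointness) and a ``jump'' part (where the finite difference is forced to be large), and pigeon-hole over the family. The genuine difference is the choice of truncation family, and it affects how the jump part is handled. You take dyadic levels $\{2^{\ell-1}M<|z|<2^{\ell}M\}$, $\ell=1,\dots,N$, with $N$ an independent parameter; each jump contribution is then not individually small, so you must sum over $\ell$ and control the overlap of the jump sets. Your heuristic (``logarithmic multiplicity'' plus the enhanced lower bound on $|D^\xi_{\e_j}u_j|$) is the right idea, but needs the explicit observation that for $a<b$ with $b\ge 2a$ one has $\log_2(b/a)\,a^p\le (b-a)^p$, so that $\sum_\ell \chi_{S_{2,\ell}^\pm}\,\min(|u_j(x)|,|u_j(x+\e_j\xi)|)^p\le C\,\e_j^p|D_{\e_j}^\xi u_j|^p$ with a universal $C$; only then does dividing by $N$ give the required smallness. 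The paper instead ties the ratio and the number of annuli to $M$ itself: it uses $M$ annuli $\{M^h<|z|<M^{h+1}\}$, $h=1,\dots,M$, whose relative width is $M-1$. This makes each jump contribution \emph{individually} of size $C_1C_2/(M-1)^p$ uniformly in $h$, so no overlap estimate is needed there, and the averaging over $h$ is required only for the annular terms. Both routes work; the paper's choice buys a cleaner jump estimate (smallness per level rather than per sum), at the price of a Lipschitz constant $1/(M-1)$ on each annulus rather than exactly $1$, while yours buys independence of the truncation ratio from $M$ at the price of the overlap lemma above, which should be stated and proved rather than left as a remark.
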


\begin{proof}
Note that, by assumption (G0), $G^{r_0,p}_{\e_j}(u_j,\Omega)$ is uniformly bounded.  Set 
\begin{equation}\label{bound1}
C_1:=\sup_j (G^{r_0,p}_{\e_j}(u_j,\Omega)+\|u_j\|_{L^p(\Omega; \rr^m)}),
\end{equation}
\begin{equation}\label{bound2}
C_2= 6\, C(\Omega, r_0) \int_{\rr^d}M(\xi)(|\xi|^p+1)\,d\xi , 
\end{equation}
where $C(\Omega, r_0)$ is the constant obtained by Corollary \ref{boundlemma-lip} applied 
with $A=\Omega$ and $r=r_0$. 

Let $\eta>0$ and $M> 0$ be fixed. Note that,  once the statement is proved for a given positive constant $M$, then it holds true also for any $M'<M$, hence up  to replace $M$ with a bigger value  it is not restrictive to assume that $M$ is an integer and satisfies 
\begin{equation}\label{bound3} 
M>\lfloor \frac{2 \,C_1C_2}{\eta}\rfloor+2.
\end{equation}
For $h=1, \ldots, M$ let $\Phi_{M}^h: {\rr}^m\to{\rr}^m$ be a Lipschitz function  such that 
\[
\Phi_{M}^h (z) =
\left\{ 
\begin{array}{ll}
z &\hbox{ if } |z|\le M^h\\
0 &\hbox{ if } |z| > M^{h+1}.\\
\end{array}
\right.
\]
and $\Phi^h_{M}$ connects linearly in the radial directions the values on the boundary of the annulus $\{z\in {\rr}^m:  M^{h}< |z| < M^{h+1}\}$. 
A quick computation shows that for any $h=1, \ldots, M$ $Lip (\Phi_{M}^h)\le \frac{1}{M-1}<1$ on the annulus, thus  $Lip(\Phi_{M}^h)= 1$. Let $w_j^h=\Phi_{M}^h(u_j)$ and estimate $\mathcal{F}_{\e_j}(w_j^h)$ from above. 
Since $f(\xi, 0)=0\ \forall \xi\in\rr^d$, we  have that
 
  \[
\mathcal{ F}_{\e_j}(w_j^h)=\!\int_{\rr^d}\!\int_{\{x\in \Omega_{\e_j}(\xi) : |u_j(x)|\wedge |u_j(x+\e_j\xi)| \le M^{h+1}\}}\!f\left(\xi,\frac{\Phi_{M}^h(u_j(x+\e_j\xi))-\Phi_{M}^h(u_j(x))}{\e_j}\right)dx\,d\xi. 
 \]
Now, for $\xi\in\rr^d$, we distinguish in $\Omega_{\e_j}(\xi)$ the points where $|u_j(x)|\le |u_j(x+\e_j\xi)|$ from those where $|u_j(x)|> |u_j(x+\e_j\xi)|$ and we perform a similar analysis in both the two sets.  

To this end let us introduce the notation 
\[
\begin{array}{ll}
&\Omega^+_{\e_j}(\xi)=\{x\in \Omega_{\e_j}(\xi) : |u_j(x)|\le |u_j(x+\e_j\xi)| \},\\
 \\
& \Omega^-_{\e_j}(\xi)= \Omega_{\e_j}(\xi)\setminus   \Omega^+_{\e_j}(\xi)  =\{x\in \Omega_{\e_j}(\xi) : |u_j(x)|> |u_j(x+\e_j\xi)| \}.
\end{array}
\]
The sets $\Omega^+_{\e_j}(\xi)\cap \{|u_j(x)|\wedge |u_j(x+\e_j\xi)| \le M^{h+1}\}$ and $\Omega^-_{\e_j}(\xi)\cap \{|u_j(x)|\wedge |u_j(x+\e_j\xi)| \le M^{h+1}\}$  can be in turn decomposed,  respectively, as the union of the disjoint sets
\[
\begin{array}{ll}
&S_{1,h,j}^+(\xi)=\{x\in \Omega^+_{\e_j}(\xi)  : |u_j(x+\e_j\xi)|<M^h\}\\
&S_{2,h,j}^+(\xi)=\{x\in \Omega^+_{\e_j}(\xi) : |u_j(x)|<M^h, |u_j(x+\e_j\xi)|\ge M^{h+1}\}\\
&S_{3,h,j}^+(\xi)=\{x\in \Omega^+_{\e_j}(\xi) : |u_j(x)|<M^h\le |u_j(x+\e_j\xi)|< M^{h+1}\}\\
&S_{4,h,j}^+(\xi)=\{x\in \Omega^+_{\e_j}(\xi) : M^h\le |u_j(x)|\le |u_j(x+\e_j\xi)|\le M^{h+1}\}\\
&S_{5,h,j}^+(\xi)=\{x\in \Omega^+_{\e_j}(\xi) : M^h\le |u_j(x)|<M^{h+1}\le |u_j(x+\e_j\xi)|\}\\
\end{array}
\]
and 
\[
\begin{array}{ll}
&S_{1,h,j}^-(\xi)=\{x\in \Omega^-_{\e_j}(\xi)  : |u_j(x)|<M^h\}\\
&S_{2,h,j}^-(\xi)=\{x\in \Omega^-_{\e_j}(\xi) : |u_j(x+\e_j\xi)|<M^h, |u_j(x)|\ge M^{h+1}\}\\
&S_{3,h,j}^-(\xi)=\{x\in \Omega^-_{\e_j}(\xi) : |u_j(x+\e_j\xi)|<M^h\le |u_j(x)|< M^{h+1}\}\\
&S_{4,h,j}^-(\xi)=\{x\in \Omega^-_{\e_j}(\xi) : M^h\le |u_j(x+\e_j\xi)|\le |u_j(x)|\le M^{h+1}\}\\
&S_{5,h,j}^-(\xi)=\{x\in \Omega^-_{\e_j}(\xi) : M^h\le |u_j(x+\e_j\xi)|<M^{h+1}\le |u_j(x)|\}.\\
\end{array}
\]
Hence, using the growth assumption on $f$ and the Lipschitz continuity of $\Phi_{h,M}$, we have
\[
\begin{split}
&\mathcal{F}_{\e_j}(w_j^h)\le \int_{\rr^d}\int_{S_{1,h,j}^{\pm}(\xi)}f\left(\xi,\frac{u_j(x+\e_j\xi)-u_j(x)}{\e_j}\right)dx\,d\xi+ \int_{\rr^d}M(\xi)\int_{S_{2,h,j}^+(\xi)}\left|\frac{u_j(x)}{\e_j}\right|^pdx\,d\xi \\
&+\int_{\rr^d}M(\xi)\int_{S_{2,h,j}^-(\xi)}\left|\frac{u_j(x+\e_j\xi)}{\e_j}\right|^pdx\,d\xi+\sum_{i=3}^5\int_{\rr^d}M(\xi)\int_{S_{i,h,j}^{\pm}(\xi)}\left|\frac{u_j(x+\e_j\xi)-u_j(x)}{\e_j}\right|^pdx\,d\xi, 
\end{split}
\]
where for the sake of notation we have set $S_{i,h,j}^{\pm}(\xi)=S_{i,h,j}^+(\xi)\cup S_{i,h,j}^-(\xi)$. 

Let us now sum over $h=1,\cdots, M$ and  get 
\begin{equation}\label{sum}
\begin{split}
\frac{1}{M}\sum_{h=1}^M\mathcal{F}_{\e_j}(w_j^h)&\le \mathcal{F}_{\e_j}(u_j)+\frac{6}{M}\int_{\rr^d}M(\xi)\int_{\Omega_{\e_j}(\xi)}\left|\frac{u_j(x+\e_j\xi)-u_j(x)}{\e_j}\right|^pdx\,d\xi\\
&+\frac{1}{M}\sum_{h=1}^M\int_{\rr^d}M(\xi)\Big(\int_{S_{2,h,j}^{+}(\xi)}\left|\frac{u_j(x)}{\e_j}\right|^pdx + \int_{S_{2,h,j}^{-}(\xi)}\left|\frac{u_j(x+\e_j\xi)}{\e_j}\right|^pdx\Big)\,d\xi,\\
\end{split}
\end{equation}
since the families $\{S_{i,h,j}^+(\xi)\}_{h\in\NN}$ and $\{S_{i,h,j}^-(\xi)\}_{h\in\NN}$, with $i=3,4,5$, consist  of pairwise disjoint sets. 
Using Corollary \ref{boundlemma-lip}, \eqref{bound1} and \eqref{bound2}, the second term in the right handside of \eqref{sum} can be estimated from above by 
\begin{equation}\label{bound4}
\begin{split}
\frac{6}{M}\int_{\rr^d}M(\xi)& \int_{\Omega_{\e_j}(\xi)}\left|\frac{u_j(x+\e_j\xi)-u_j(x)}{\e_j}\right|^pdx\,d\xi\\
&\le \frac{6}{M}C(\Omega, r_0)\int_{\rr^d}M(\xi)(|\xi|^p+1)\,d\xi \left(G^{r_0,p}_{\e_j}(u_j,\Omega)+\|u_j\|^p_{L^p(\Omega,\rr^m)}\right)\\
& \le\frac{1}{M} C_2\,\left(G^{r_0,p}_{\e_j}(u_j,\Omega)+\|u_j\|^p_{L^p(\Omega,\rr^m)}\right)\le \frac{1}{M}C_1C_2,
 \end{split}
\end{equation}
if $\e_j<\e_0$, with $\e_0$ as in Corollary \ref{boundlemma-lip}. Since by \eqref{bound3} we have  that   $\displaystyle\frac{C_1\, C_2}{M} <\frac{\eta}{2}$,  we get for $\e_j\in (0,\e_0)$
\begin{equation}\label{chiellini}
\frac{6}{M}\int_{\rr^d}M(\xi) \int_{\Omega_{\e_j}(\xi)}\left|\frac{u_j(x+\e_j\xi)-u_j(x)}{\e_j}\right|^pdx\,d\xi< \frac{\eta}{2}.  
\end{equation}
We are left with the estimate of the last term in the right handside of  \eqref{sum}. Arguing as in  \eqref{bound4} and taking into account \eqref{bound2}, we deduce that 
for $\e_j\in (0,\e_0)$
\[
\begin{split}
C_1 C_2&\ge\int_{\rr^d}M(\xi)\int_{S_{2,h,j}^{\pm}(\xi)}\left|\frac{u_j(x+\e_j\xi)-u_j(x)}{\e_j}\right|^pdx\,d\xi\\
&\ge  \int_{\rr^d}M(\xi)\int_{S_{2,h,j}^{\pm}(\xi)}\left|\left|\frac{u_j(x+\e_j\xi)}{\e_j}\right|-\left|\frac{u_j(x)}{\e_j}\right|\right|^pdx\,d\xi\\
&\ge  \int_{\rr^d}M(\xi)\int_{S_{2,h,j}^{\pm}(\xi)}\left(\frac{M^{h+1}-M^h}{\e_j}\right)^pdx\,d\xi=\frac{(M^{h+1}-M^h)^p}{\e_j^{p}}\int_{\rr^d}M(\xi)|S_{2,h,j}^{\pm}(\xi)|\,d\xi,
\end{split}
\]
which in turn yields
$$
\int_{\rr^d}M(\xi)|S_{2,h,j}^{\pm}(\xi)|\,d\xi\le \frac{C_1 C_2\e_j^{p}}{(M^{h+1}-M^h)^p}.
$$
Exploiting this last estimate and the very definition of  $S_{2,h,j}^{+}(\xi), S_{2,h,j}^{-}(\xi)$,  we also get for $\e_j\in (0,\e_0)$
\begin{equation}
\begin{split}\label{buffon}
&\int_{\rr^d}M(\xi)\left( \int_{S_{2,h,j}^{+}(\xi)}\left|\frac{u_j(x)}{\e_j}\right|^pdx+ \int_{S_{2,h,j}^{-}(\xi)}\left|\frac{u_j(x+\e_j\xi)}{\e_j}\right|^pdx\right)\,d\xi\\ 
&\le \frac{M^{hp}}{\e_j^{p}}\int_{\rr^d}M(\xi)|S_{2,h,j}^{\pm}(\xi)|\,d\xi\le C_1\, C_2\frac{M^{hp}}{(M^{h+1}-M^h)^p}=\frac{C_1\, C_2}{(M-1)^p}.\\
\end{split}
\end{equation}
Since, by \eqref{bound3}, $M\ge 2$, we have 
\begin{equation}\label{bonucci}
\frac{C_1\, C_2}{(M-1)^p}\le \frac{C_1\, C_2}{(M-1)}\le  \frac{C_1\, C_2}{\lfloor\frac{ 2\, C_1C_2}{\eta}\rfloor+1}< \frac{\eta}{2}.
\end{equation}
Hence, by \eqref{sum}, \eqref{chiellini}, \eqref{buffon} and \eqref{bonucci}, we eventually deduce that for every $j\in\NN$ such that $\e_j<\e_0$ there exists $h(j)\in\{1,\cdots, M\}$ satisfying
\[
\mathcal{F}_{\e_j}(w_j^{h(j)})\le\frac{1}{M}\sum_{h=1}^M\mathcal{F}_{\e_j}(w_j^h)\le \mathcal{F}_{\e_j}(u_j)+\eta.
\]
We then define $\Phi_{j,M}=\Phi_M^{h(j)}$. 
Up to selecting a subsequence, we may also assume that $h(j)$ is a constant value in $\{1,\cdots, M\}$.  
\end{proof}

\subsection{Approximating capacitary energy densities}
In this subsection we introduce and investigate the main properties of suitable energy densities defined through minimum problems of capacitary type involving the approximating energies $\mathcal{F}_\e^T$.  

 
\medskip 

\noindent For any $\e>0$, $T>r_0$, $R\ge 2+T\e$, and $z\in\rr^m$, set
\begin{equation}\label{approxdens}
\varphi_{\e,T, R}(z):=\inf\{\mathcal{F}_\e^T(v, B_R):\ v\in L^p_{\e,T, z}(B_R;\rr^m)\},
\end{equation}
where $\mathcal{F}_\e^T$ is defined in \eqref{uncofunctionalsT} and 
\begin{equation}\label{testfunct}
L^p_{\e,T, z}(B_R;\rr^m):=\{v\in L^p(B_R;\rr^m):\ v\equiv 0\ \text{in}\ B_1,\ v\equiv z\ \text{on}\ \partial^{\e T}B_R\}, 
\end{equation}
with the notation $ \partial^{\e T} A:=\partial A+B_{\e T}$,  for any  $A\subset \mathbb{R}^d $.  

We identify any $v\in L^p_{\e,T, z}(B_R;\rr^m)$ with its extension  in $L^{p}_{\loc}(\mathbb{R}^d;\mathbb{R}^m)$ such that 
$ v \equiv z$ in $\rr^d\setminus B_R$. Hence the function  $\varphi_{\e,T, R}(z)$ can be also rewritten as 
\begin{equation}\label{equivvarphi}
\varphi_{\e,T, R}(z)=\inf\{\mathcal{F}_\e^T(v,\rr^d) :   v-z\in L^{p}(\mathbb{R}^d;\mathbb{R}^m),  v\equiv 0\ \text{in}\ B_1, v \equiv z \ \text{in}\ \rr^d\setminus B_{R-\e T} \}.
\end{equation}
Note that the request $R\ge 2+T\e$  is not restrictive, as we are interested in letting $R\to +\infty$; this assumption will be useful in Proposition \ref{pgrowthdens}.

\begin{remark}\label{inf=min}
Note that,  if $f(\xi,\cdot )$ is convex for every  $\xi\in \rr^d$, the infimum defining $\varphi_{\e,T, R}(z)$ in \eqref{approxdens}  is actually a minimum. Indeed, by the convexity of $f(\xi, \cdot)$ also  $\mathcal{F}_\e^T(v, B_R)$  is convex. Hence, taking into account  Proposition \ref{pwineq} with $E=B_1$, $A=B_R$, $\mathcal{F}_\e^T(v, B_R)$ is lower semicontinuous and coercive with respect to the weak topology in $ L^p(B_R;\rr^m)$. As the constraints $v\equiv 0\ \text{in}\ B_1,\ v\equiv z\ \text{on}\ \partial^{\e T}B_R$ are convex  and closed by the strong  convergence in $ L^p(B_R;\rr^m)$ the existence of minimizers follows by the standard methods. 
\end{remark}
The properties  of the densities  $\varphi_{\e,T, R}$ we are going to state and prove  will be instrumental in Subsection \ref{limitcap}  in studying the pointwise  and locally uniform limit of $\varphi_{\e,T, R}(\cdot)$ when the parameters $R,T$ go to $ +\infty$, and  $\e$ either goes to $0$ or remains fixed equal to $\alpha$. 
These results  will allow us to estimate the energetic contribution near the perforations leading to the appearance  of  the density functions  $\varphi$ and $\varphi_{NL,\alpha}$ defined by \eqref{denscapacitaria} and \eqref{denscapacitariaNL}, respectively. 

\medskip

\noindent The first result  establishes growth conditions of order $p$ of $\varphi_{\e,T, R}$. 
\begin{proposition}\label{pgrowthdens}
Let $f$ satisfy assumptions {\rm ({\bf H})} and {\rm({\bf G})}, and let $T>r_0, \e_0>0$, and $R>1$ be fixed such that $R-\e_0 T \geq 2$. Then, for every $0<\e\le \e_0$ there exists $c_1, c_2>0$ such that 
\begin{equation}\label{growthphibelow}
c_1|z|^p\leq \varphi_{\e,T, R}(z)\ \forall z\in\rr^m 
\end{equation}
\begin{equation}\label{growthphiabove} 
\varphi_{\e,T, R}(z)\leq c_2|z|^p\ \forall z\in\rr^m.
\end{equation}
In particular, the constant $c_1$ depends on $p,d,\lambda_0,r_0,\e_0$ and the constant $c_2$ on $p,d,r_0$.
\end{proposition}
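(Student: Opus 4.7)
The proof splits into two independent arguments. The upper bound \eqref{growthphiabove} follows by an explicit construction, while the lower bound \eqref{growthphibelow} relies on the nonlocal Gagliardo--Nirenberg--Sobolev inequality (Theorem \ref{GNS}).

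For \eqref{growthphiabove}, the plan is to exhibit a competitor in \eqref{testfunct}. Let $\phi\colon[0,+\infty)\to[0,1]$ be $1$-Lipschitz with $\phi\equiv 0$ on $[0,1]$ and $\phi\equiv 1$ on $[2,+\infty)$, and set $v_0(x):=z\,\phi(|x|)$. Since $R-\e T\ge R-\e_0 T\ge 2$, we have $v_0\equiv 0$ on $B_1$ and $v_0\equiv z$ on $\partial^{\e T}B_R$, so $v_0\in L^p_{\e,T,z}(B_R;\rr^m)$. The $1$-Lipschitz property of $\phi$ yields $|D_\e^\xi v_0(x)|\le |z|\,|\xi|$, hence by Remark \ref{remcrescita} $f(\xi,D_\e^\xi v_0(x))\le M(\xi)|z|^p|\xi|^p$. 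Moreover $D_\e^\xi v_0(x)=0$ unless one of $x$, $x+\e\xi$ lies in the fixed annulus $B_2\setminus B_1$, whose measure depends only on $d$. Integrating first in $x$ and then in $\xi\in B_T$, and invoking (G1), yields $\mathcal{F}_\e^T(v_0,B_R)\le c(d)\,|z|^p\int_{\rr^d}M(\xi)(1+|\xi|^p)\,d\xi$, independent of $\e, T, R$.

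For \eqref{growthphibelow}, take any $v\in L^p_{\e,T,z}(B_R;\rr^m)$, identified via \eqref{equivvarphi} with its extension satisfying $v\equiv z$ on $\rr^d\setminus B_{R-\e T}$, and fix a parameter $r\in(0,r_0]$ to be chosen. Because $r\le r_0\le T$ and $v\equiv z$ on the $\e T$-neighbourhood of $\partial B_R$, a short case analysis (depending on whether $x$ or $x+\e\xi$ lies outside $B_R$) shows that $D_\e^\xi v(x)=0$ for every $|\xi|\le r$ and every $x\notin(B_R)_\e(\xi)$. Combined with (G0) and Remark \ref{remcrescita}, this gives $\mathcal{F}_\e^T(v,B_R)\ge\lambda_0\,G_\e^{r,p}(v,\rr^d)=\lambda_0\,G_\e^{r,p}(w,\rr^d)$, where $w:=v-z$ is compactly supported in $\overline{B_{R-\e T}}$. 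Applying Theorem \ref{GNS} to $w$ with parameter $r$ then produces
\[
\|T_\e w\|_{L^{p^*}(\rr^d)}^p\;\le\;C(p,d,r)\,G_\e^{r,p}(w,\rr^d)\;\le\;\frac{C(p,d,r)}{\lambda_0}\,\mathcal{F}_\e^T(v,B_R).
\]

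It remains to bound $\|T_\e w\|_{L^{p^*}}$ from below by $|z|$. Since $v\equiv 0$ on $B_1$, the piecewise constant interpolation $T_\e w$ equals $-z$ on every cube $Q_k^\e$ (of side $\tilde{r}\e=r\e/\sqrt{d+3}$) entirely contained in $B_1$. The key point is to choose $r$ so that such cubes cover a portion of $B_1$ of positive measure uniformly in $\e\le\e_0$: the choice $r:=\min\{r_0,\,\sqrt{d+3}/(2\e_0\sqrt{d})\}$ guarantees $\tilde{r}\e\sqrt{d}\le 1/2$ for all $\e\le\e_0$, so that any cube $Q_k^\e$ meeting $B_{1/2}$ is contained in $B_1$; this yields $\|T_\e w\|_{L^{p^*}}^{p^*}\ge |B_{1/2}|\,|z|^{p^*}$ and hence $\|T_\e w\|_{L^{p^*}}^p\ge c(p,d)|z|^p$. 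Chaining with the previous estimate delivers \eqref{growthphibelow} with $c_1=c_1(p,d,\lambda_0,r_0,\e_0)$. The main technical obstacle is exactly this balance: Theorem \ref{GNS} couples the short-range integration scale $r$ in $G_\e^{r,p}$ with the lattice scale $\tilde{r}\e$ of the interpolation, and $r$ must simultaneously be admissible for the coercivity via (G0) (i.e.\ $r\le r_0$) and small enough that the cubes fit inside $B_1$ uniformly for $\e\le\e_0$, which forces the dependence of $c_1$ on $\e_0$.
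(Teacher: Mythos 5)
Your proof is correct, and the lower bound takes a genuinely different route from the paper's, which is worth highlighting. For \eqref{growthphibelow}, after the common starting point $\mathcal{F}_\e^T(v,B_R)\ge\lambda_0\,G_\e^{r,p}(v,\rr^d)$, the paper reduces the discrete energy $G_\e^{r,p}$ to the classical $p$-capacity of a ball, via piecewise constant/affine interpolations on lattices $\mathcal{L}_\xi$, Jensen's inequality, and the monotonicity properties \eqref{stimaciuno}, \eqref{capmonot} (and restricts the writing to $d=2$ ``to avoid too many technicalities''). You instead invoke Theorem \ref{GNS} directly: the $L^{p^*}$-bound on $T_\e w$ is the coercivity, and the constraint $v\equiv 0$ on $B_1$ yields the lower bound on $\|T_\e w\|_{L^{p^*}}$. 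This is cleaner, dimension-independent, and makes explicit exactly why $c_1$ must depend on $\e_0$ (the lattice scale $\tilde{r}\e$ must fit inside $B_1$ uniformly). It is also internally consistent with the paper's own philosophy of using Theorem \ref{GNS} as the replacement for Sobolev embedding -- though the paper does not use it for this particular lemma. For \eqref{growthphiabove} you also differ: you plug in a single explicit $1$-Lipschitz radial competitor, whereas the paper estimates $G_\e^{r_0,p}$ of a smooth competitor via a Fubini argument and then takes the infimum to obtain $\text{cap}_p(B_1,B_2)$; yours is more elementary.

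One small inaccuracy: the claim ``$D_\e^\xi v_0(x)=0$ unless one of $x$, $x+\e\xi$ lies in the annulus $B_2\setminus B_1$'' is not quite right, since one can have $x\in B_1$ and $x+\e\xi\notin B_2$ (forcing $\e|\xi|\ge 1$) with $D_\e^\xi v_0(x)\neq 0$ yet neither endpoint in the annulus. The correct statement is that $D_\e^\xi v_0(x)\neq 0$ forces $x\in B_2$ or $x+\e\xi\in B_2$, so the support in $x$ has measure at most $2|B_2|$, which is all that is needed; the constant and the conclusion of your estimate are unaffected. Everything else -- the boundary condition analysis showing $D_\e^\xi v\equiv 0$ outside $(B_R)_\e(\xi)$ for $|\xi|\le r\le r_0<T$, the choice of $r$, and the chaining of inequalities -- is correct.
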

\begin{proof}
We first prove \eqref{growthphibelow}. The proof relies on a suitable lower bound of $\mathcal{F}_\e(v,B_R)$ with discrete energies. In order to avoid too many technicalities, we restrict the proof to the case $d=2$; the argument can be generalised to any dimension (see e.g. the proof of Theorem 2.6 in \cite{solci}). Let us introduce some notation. Given $\xi\in\rr^2\setminus \{0\}$, let ${\cal L}_\xi$ be the lattice in $\rr^2$ defined by
$$
{\cal L}_\xi={\mathbb Z}\xi\oplus{\mathbb Z}\xi^\perp,
$$
where $\xi^\perp:=(-\xi_2,\xi_1)$.

Let $v\in L^p_{\e, T, z}(B_R;\rr^m)$ and let $0<\bar r<r_0$. By (\textbf{H}) and (G0), we get
\begin{equation}\label{stimadisc}
\begin{split}
\mathcal {F}^T_\e(v,B_R)&\geq \lambda_0\, G_\e^{\bar r,p}(v,B_R)= \lambda_0\, G_\e^{\bar r,p}(v,\rr^2)\\
&=\frac{\lambda_0}{2}\int_{B_{\bar r}}\int_{\rr^2}\sum_{\xi'\in\{\xi,\xi^\perp\}} \left |\frac{v(x+\e\xi')-v(x)}{\e}\right|^p\, dx\, d\xi \\
&= \frac{\lambda_0}{2}\int_{B_{\bar r}}\sum_{k\in {\cal L}_\xi }\int_{\e(k+ Q^\xi)}\sum_{\xi'\in\{\xi,\xi^\perp\}} \left |\frac{v(x+\e\xi')-v(x)}{\e}\right|^p\, dx\, d\xi 
\end{split}
\end{equation}
where
$$
Q^\xi:=[0,1)\xi\oplus [0,1)\xi^\perp.
$$
Let $T_\e^\xi v$ the function which is constant on each square $\e(k+ Q^\xi),\ k\in {\cal L}_\xi$, and is defined by
$$
T_\e^\xi v (x)=\frac{1}{(\e|\xi|)^2}\int_{\e(k+ Q^\xi)} v(y)\, dy,\quad x\in \e(k+ Q^\xi),\ k\in {\cal L}_\xi.
$$
Then, by \eqref{stimadisc} and Jensen's inequality, we get
\begin{equation}\label{stimadisc2}
\begin{split}
\mathcal {F}_\e^T(v,B_R) &\geq\frac{\lambda_0}{2}\int_{B_{\bar r}}\sum_{k\in {\cal L}_\xi}(\e|\xi|)^2\sum_{\xi'\in\{\xi,\xi^\perp\}} \left |\frac{T_\e^\xi v(\e(k+\xi'))-T_\e^\xi v(\e k)}{\e}\right|^p\, d\xi \\
&=\frac{\lambda_0}{2}\int_{B_{\bar r}}|\xi|^p\sum_{k\in {\cal L}_\xi}(\e|\xi|)^2\sum_{\xi'\in\{\xi,\xi^\perp\}} \left |\frac{T_\e^\xi v(\e(k+\xi'))-T_\e^\xi v(\e k)}{\e|\xi|}\right|^p\, d\xi. 
\end{split}
\end{equation}
Let ${\cal T}_\xi^{\pm}$ be the triangles defined by
$$
{\cal T}_\xi^{\pm}:=\{x\in Q^\xi:\ \pm\langle x,\xi\rangle\leq \pm \langle x,\xi^\perp\rangle\ \},
$$
and let $w_\e^\xi$ the piecewise affine function obtained by linearly interpolating the values $\{T_\e^\xi v(\e k)\}_{k\in {\cal L}_\xi}$ on the triangles $\e (k+{\cal T}_\xi^{\pm})$, $k\in{\cal L}_\xi$. Note that, for $\e\le\e_0$, if $\bar r\le\frac{1}{2\e_0}$ and $\xi\in B_{\bar r}$, then $w_\e^\xi \equiv 0$ on $B_{\frac12}$ and  $w_\e^\xi \equiv z$ on $\rr^2\setminus B_{\frac32 R}$. Moreover, taking into account \eqref{stimaciuno} and \eqref{capmonot},  we easily infer that
\begin{equation}\label{stimadisc3}
\begin{split}
\sum_{k\in {\cal L}_\xi}(\e|\xi|)^2 & \sum_{\xi'\in\{\xi,\xi^\perp\}} \left |\frac{T_\e^\xi v(\e(k+\xi'))-T_\e^\xi v(\e k)}{\e|\xi|}\right|^p
\geq C \int_{\rr^2} |\nabla w_\e^\xi|^p\, dx\\
&\geq C\,  \text{cap}_p (B_{\frac12},B_{\frac32 R})|z|^p\geq  C\,  \text{cap}_p (B_{\frac12})|z|^p.
\end{split}
\end{equation}
In conclusion, selecting $\bar r:=\max\{r_0, \frac{1}{2\e_0}\}$,  we get 
$$
\mathcal {F}^T_\e(v,B_R)\ge \frac{\lambda_0}{2}\left(\int_{B_{\bar r}}|\xi|^p \, d\xi \right) C\,\text{cap}_p (B_{\frac12})|z|^p =  c_1 |z|^p.
$$

\noindent We now prove \eqref{growthphiabove}. To this aim, note that, by using a Fubini argument, one can easily shows that there exists $C=C(r_0)$ such that, for $\e\le\e_0$, for any $u$ such that  $u-z\in C^1_c(B_{R-\e T};\rr^m)$, then
$$
G_\e^{r_0,p}(u,\rr^d)=G_\e^{r_0,p}(u,B_{R-\e T})\leq C \int_{B_{R-\e T}}|\nabla u|^p\, dx.
$$ 
By (\textbf{H}), (G1),  \eqref{eqtuttoerred}, and  the density of functions compactly supported in the capacitary problem, we then get
\[
\begin{split}
& \varphi_{\e,T,R}(z)  \leq {\mathcal F}^T_\e (u, B_R)\le C \left(\int_{\rr^d} M(\xi)(|\xi|^p+1)\, d\xi\right)  G_\e^{r_0,p}(u,\rr^d) \\
&\leq C \left(\int_{\rr^d} M(\xi)(|\xi|^p+1)\, d\xi\right)\inf\Big\{\int_{B_{R-\e T}}|\nabla u|^p\, dx:\, u\equiv 0\ \text{in } B_1,\, u-z\in C^1_c(B_{R-\e T};\rr^m)\Big\}\\
&= C \left(\int_{\rr^d} M(\xi)(|\xi|^p+1)\, d\xi\right) \text{cap}_{p}(B_1,B_{ R-\e T})|z|^p\\
& \leq C \left(\int_{\rr^d} M(\xi)(|\xi|^p+1)\, d\xi\right) \text{cap}_{p}(B_1,B_2)|z|^p=c_2|z|^p.
\end{split}
\]
\end{proof}

\noindent In the next proposition we show that the functions $\varphi_{\e,T,R}$ are uniformly Lipschitz continuous on compact sets. 
\begin{proposition}\label{th:equilip}
Let $f$ satisfy assumptions {\rm ({\bf H})}, {\rm ({\bf G})} and {\rm ({\bf L})}. Then there exist a constant $C>0$ independent of $\e,T$ and $R$ such that for every $z,w\in\rr^m$ we have
\begin{equation}\label{equilip}
|\varphi_{\e,T, R}(w)-\varphi_{\e, T, R}(z)|\leq C(|z|^{p-1}+|w|^{p-1}|) |w-z|.
\end{equation}
\end{proposition}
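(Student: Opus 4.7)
My plan is to prove the one-sided inequality $\varphi_{\e,T,R}(w)\le\varphi_{\e,T,R}(z)+C(|z|^{p-1}+|w|^{p-1})|w-z|$ and then recover \eqref{equilip} by exchanging the roles of $z$ and $w$. Given $\eta>0$, I will pick an almost minimizer $v_z\in L^p_{\e,T,z}(B_R;\rr^m)$ for $\varphi_{\e,T,R}(z)$, extended by $z$ to $\rr^d\setminus B_R$, and construct a competitor for $\varphi_{\e,T,R}(w)$ by setting $v_w:=v_z+\chi(w-z)$, where $\chi\in C^\infty(\rr^d)$ is a fixed cut-off with $\chi\equiv 0$ on $B_1$, $\chi\equiv 1$ on $\rr^d\setminus B_2$, and $\|\nabla\chi\|_\infty\le C$. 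The assumption $R\ge 2+\e T$ ensures that $\chi\equiv 1$ on $\rr^d\setminus B_{R-\e T}$; hence $v_w\equiv 0$ on $B_1$ and $v_w\equiv w$ on $\rr^d\setminus B_{R-\e T}$, so by the equivalent formulation \eqref{equivvarphi} the function $v_w$ is admissible, giving $\varphi_{\e,T,R}(w)\le\mathcal{F}_\e^T(v_w,B_R)$.

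Writing $D_\e^\xi v_w=D_\e^\xi v_z+(w-z)D_\e^\xi\chi$ and applying hypothesis (L) together with the elementary inequality $|a+b|^{p-1}\le C(|a|^{p-1}+|b|^{p-1})$, I get the pointwise bound
\[
|f(\xi,D_\e^\xi v_w)-f(\xi,D_\e^\xi v_z)|\le CM(\xi)\bigl(|D_\e^\xi v_z|^{p-1}+|w-z|^{p-1}|D_\e^\xi\chi|^{p-1}\bigr)|w-z||D_\e^\xi\chi|.
\]
Since $\nabla\chi$ is compactly supported, $|D_\e^\xi\chi|\le C|\xi|$ and the support of $D_\e^\xi\chi$ is contained in a bounded set $S$ independent of $\e,T,R$. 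Integrating and applying H\"older's inequality in $x$ on $S$ leads to
\[
\mathcal{F}_\e^T(v_w,B_R)-\mathcal{F}_\e^T(v_z,B_R)\le C|w-z|\int_{B_T}M(\xi)|\xi|\Bigl(\int_S|D_\e^\xi v_z|^p\,dx\Bigr)^{(p-1)/p}d\xi + C|w-z|^p,
\]
the last summand collecting the $|D_\e^\xi\chi|^p$ contribution and being controlled by (G1).

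The main obstacle is estimating $\int_S|D_\e^\xi v_z|^p\,dx$ by a multiple of $(|\xi|^p+1)|z|^p$ uniformly in $\e,T,R$. Since $v_z\equiv z$ outside $B_{R-\e T}$ and $T>r_0$, Remark \ref{tuttolospazio} (applied with $r=r_0$) yields
\[
\int_{\rr^d}|D_\e^\xi v_z|^p\,dx\le C(r_0)(|\xi|^p+1)\,G_\e^{r_0,p}(v_z,\rr^d).
\]
For $|\xi|\le r_0$, the inequality $T>r_0$ also forces $D_\e^\xi v_z(x)=0$ whenever $x\notin(B_R)_\e(\xi)$, so by (G0) and Proposition \ref{pgrowthdens},
\[
G_\e^{r_0,p}(v_z,\rr^d)\le\lambda_0^{-1}\mathcal{F}_\e^T(v_z,B_R)\le\lambda_0^{-1}\bigl(c_2|z|^p+\eta\bigr),
\]
with constants independent of $\e,T,R$. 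Plugging this into the H\"older estimate and using that $|\xi|(|\xi|^p+1)^{(p-1)/p}\le C(|\xi|^p+1)$ is $M(\xi)$-integrable by (G1), I conclude
\[
\varphi_{\e,T,R}(w)\le\varphi_{\e,T,R}(z)+C|z|^{p-1}|w-z|+C|w-z|^p+\eta.
\]
Sending $\eta\to 0$, absorbing $|w-z|^p\le C(|z|^{p-1}+|w|^{p-1})|w-z|$ (valid for $p>1$), and swapping $z\leftrightarrow w$ yields \eqref{equilip}.
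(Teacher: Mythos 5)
Your proof is correct, but it takes a genuinely different route from the paper. The paper builds its competitor by composition: $v_w := \phi\circ v_z$, where $\phi(\zeta) = \frac{|w|}{|z|}\mathcal{R}_z^w(\zeta)$ is a linear map (dilation plus rotation) satisfying $\phi(0)=0$, $\phi(z)=w$, $\|\nabla\phi\|_\infty \le C\frac{|w|}{|z|}$ and $\|\nabla\phi - I\|_\infty \le C\frac{|w-z|}{|z|}$. Because $\phi$ is linear, $D_\e^\xi v_w - D_\e^\xi v_z = (\nabla\phi - I)D_\e^\xi v_z$ is \emph{pointwise} controlled by $D_\e^\xi v_z$, so hypothesis (L) yields directly $|f(\xi, D_\e^\xi v_w) - f(\xi, D_\e^\xi v_z)| \le CM(\xi)\frac{|z|^{p-1}+|w|^{p-1}}{|z|^p}|w-z|\,|D_\e^\xi v_z|^p$; integrating and invoking Proposition \ref{pgrowthdens} (through the same chain of (G0), Remark \ref{tuttolospazio}, (G1)) closes the bound with no H\"older step. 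Your additive competitor $v_w := v_z + \chi(w-z)$ is equally admissible, but the extra term $(w-z)D_\e^\xi\chi$ is not pointwise proportional to $D_\e^\xi v_z$, which forces you to pay a H\"older inequality in $x$ and to handle the $|D_\e^\xi\chi|^p$ contribution separately; you do this correctly, and your chain (L) $\to$ H\"older $\to$ Remark \ref{tuttolospazio} $\to$ (G0), (G1), Proposition \ref{pgrowthdens} $\to$ absorption of $|w-z|^p$ is sound and uniform in $\e,T,R$. One small imprecision: the support of $D_\e^\xi\chi$ is \emph{not} contained in a bounded set $S$ independent of $\e,T,R$ — it lies in $B_2\cup(B_2-\e\xi)$, which escapes any fixed ball when $\e|\xi|$ is large. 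This is harmless for your argument, since the H\"older step only needs $|\{D_\e^\xi\chi\neq 0\}|\le 2|B_2|$ uniformly (so that $\|D_\e^\xi\chi\|_{L^p}\le C|\xi|$) together with the full-space control of $\int_{\rr^d}|D_\e^\xi v_z|^p\,dx$ that you already use in the next step; but the sentence should be corrected. In short: same ingredients on the capacity side, different competitor — the paper's compositional trick gives a pointwise estimate, your additive trick costs an extra H\"older inequality but is otherwise equivalent.
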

\begin{proof}

Let us prove \eqref{equilip} for fixed $z$ and $w$. Since the inequality is trivially true when $z=0$ or $w=0$, we may suppose both not null and consider the map $\phi:\rr^m\to\rr^m$ defined by
$$
\phi(\zeta)=\frac{|w|}{|z|}\mathcal{R}_z^w(\zeta),
$$
where $\mathcal{R}_z^w$ is a rotation that maps $z$ into $\frac{|z|}{|w|} w$. Note that $\phi(0)=0$, $\phi(z)=w$ and 
\begin{equation}\label{similitude}
\|\nabla \phi\|_\infty\leq C \frac {|w|}{|z|},\quad \|\nabla \phi-I\|_\infty\leq C \frac {|w-z|}{|z|} .
\end{equation}
For $\eta>0$, let $v_z\in L^p_{\e,T, z}(B_R;\rr^m)$ be such that $\mathcal{F}_\e^T(v_z, B_R)\le \varphi_{\e,T, R}(z)+\eta$ and set $$
v_w:=\phi\circ v_z.
$$
Note that $v_w\in L^p_{\e,T,w}(B_R;\rr^m)$, hence
\begin{equation}\label{stimaphi}
\varphi_{\e,T, R}(w)\leq \mathcal{F}_\e^T(v_w, B_R)\leq \varphi_{\e,T, R}(z)+\mathcal{F}_\e^T(v_w, B_R)-\mathcal{F}_\e^T(v_z, B_R)+\eta.
\end{equation}
By hypothesis  ({\bf L}) and \eqref{similitude}, we infer that for every $\xi\in\rr^d$
\[
\begin{split} 
&|f(\xi,D_\e^\xi v_w(x))-f(\xi,D_\e^\xi v_z(x))|  \leq C M(\xi)(|D_\e^\xi v_z(x)|^{p-1}+|D_\e^\xi v_w(x)|^{p-1}) |D_\e^\xi v_z(x)-D_\e^\xi v_w(x)|\\
& \qquad  \leq C M(\xi)(|D_\e^\xi v_z(x)|^{p-1}+\|\nabla \phi\|_{\infty}^{p-1}|D_\e^\xi v_z(x)|^{p-1}) \|\nabla \phi-I\|_{\infty}|D_\e^\xi v_z(x)|\\
&\qquad \leq C M(\xi)\frac{|z|^{p-1}+|w|^{p-1}}{|z|^p}|w-z||D_\e^\xi v_z(x)|^p.\\
\end{split}
\]
Thus, by \eqref{stimaphi}, we get
\begin{equation}\label{stimaphi2}
\varphi_{\e,T, R}(w)\leq  \varphi_{\e,T, R}(z)+C \frac{|z|^{p-1}+|w|^{p-1}}{|z|^p}|w-z|\int_{\rr^d} M(\xi)\int_{ (B_R)_{\e}(\xi)}|D_\e^\xi v_z(x)|^p\, dx\, d\xi+\eta.
\end{equation}
By (\textbf{H}), (G0) and \eqref{growthphiabove}, we get
\begin{equation}\label{GF}
G_\e^{r_0,p}(v_z,B_R)\leq C \, \mathcal{F}_\e^T(v_z,B_R)\le C\, (\varphi_{\e,T, R}(z)+\eta)\leq C(|z|^p+\eta).
\end{equation}
Since, by  \eqref{eqtuttoerred}, we have that for any $\xi\in\rr^d$
$$
\int_{ (B_R)_{\e}(\xi)}|D_\e^\xi v_z(x)|^p\, dx\leq C (|\xi|^p+1)G_\e^{r_0,p}(v_z,B_R),
$$
inequality \eqref{stimaphi2} and (G1) yields that
\begin{equation*}
\varphi_{\e,T, R}(w)\leq  \varphi_{\e, T, R}(z)+C (|z|^{p-1}+|w|^{p-1})|w-z|\frac{|z|^p+\eta}{|z|^p}+\eta.
\end{equation*}
Taking the limit as  $\eta$ tends to 0, and then reversing the role of $z$ and $w$, \eqref{equilip} easily follows.
\end{proof}

We conclude this subsection with a technical result, yielding that in the minimum problems defining $\varphi_{\e,T,R}$ we may reduce to admissible functions uniformly bounded in $L^\infty$. The strategy of the proof is analogous to that of Lemma \ref{truncation}, hence we highlight only the main differences.

\begin{proposition}\label{potcaplimitato}
Let $T>r_0$, $\alpha>0$, $ R\ge 2+T\alpha$ and $\bar C, M_0>0$ be fixed. Then for every $\eta>0$ there exists $M>M_0$ such that for every  $z\in B_{M_0}$, given $v\in L^p_{\alpha,T,z}(B_R;\rr^m)$  such that 
$$
\mathcal{F}_\alpha^T( v,B_R)\le \bar C |z|^p,
$$ 
then there exists $v_M\in L^p_{\alpha,T,z}(B_R;\rr^m)$, with $\|v_M\|_{L^\infty(B_R;\rr^m)}\le M$, such that
$$
\mathcal{F}^T_\alpha(v_M,B_R)\le  \mathcal{F}_\alpha^T( v,B_R)+\eta.
$$
\end{proposition}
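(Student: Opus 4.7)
The strategy is to follow the proof of Lemma~\ref{truncation}, calibrating the radial truncation maps so as to preserve the admissibility constraints of $L^p_{\alpha,T,z}(B_R;\rr^m)$ uniformly in $z\in B_{M_0}$. Fix integers $N\ge 1$ and $M\ge 2$ to be chosen and, for $h=0,\ldots,N-1$, let $\Phi_h\colon\rr^m\to\rr^m$ be the radial $1$-Lipschitz retraction with $\Phi_h(\zeta)=\zeta$ on $B_{M_0 M^{h+1}}$, $\Phi_h(\zeta)=0$ outside $B_{M_0 M^{h+2}}$, and linear interpolation in the radial direction in between; the radial slope $1/(M-1)\le 1$ on the annulus gives $\mathrm{Lip}(\Phi_h)=1$. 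Since $\Phi_h(0)=0$ and $\Phi_h$ is the identity on $B_{M_0}\ni z$, the composition $v_h:=\Phi_h\circ v$ still lies in $L^p_{\alpha,T,z}(B_R;\rr^m)$, with $\|v_h\|_{L^\infty(B_R;\rr^m)}\le M_0 M^{N+1}$.

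Decompose, for each $\xi\in B_T$, the set $(B_R)_\alpha(\xi)$ into five pairs of disjoint strata $S^{\pm}_{i,h}(\xi)$, $i=1,\ldots,5$, defined exactly as in the proof of Lemma~\ref{truncation} but with cutoff radii $M_0 M^{h+1}$ and $M_0 M^{h+2}$. The $1$-Lipschitz property of $\Phi_h$ yields the pointwise bound $|D^\xi_\alpha v_h(x)|\le |D^\xi_\alpha v(x)|$ on every stratum except $S^{\pm}_{2,h}$; on $S^{\pm}_{2,h}$ the truncated difference has modulus at most $M_0 M^{h+1}/\alpha$, while the original difference exceeds $M_0 M^{h+1}(M-1)/\alpha$. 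Summing over $h=0,\ldots,N-1$, exploiting the pairwise disjointness in $h$ of $S^{\pm}_{i,h}$ for $i=3,4,5$, and dividing by $N$ yield, in analogy with \eqref{sum},
\[
\frac{1}{N}\sum_{h=0}^{N-1}\mathcal{F}^T_\alpha(v_h,B_R)\le \mathcal{F}^T_\alpha(v,B_R)+\frac{C}{N}\int_{B_T}M(\xi)\int_{(B_R)_\alpha(\xi)}|D^\xi_\alpha v(x)|^p\,dx\,d\xi+\mathrm{II}_M,
\]
where $\mathrm{II}_M$ collects the $S^{\pm}_{2,h}$-contributions. Repeating the large-jump argument of Lemma~\ref{truncation} and using the hypothesis $\mathcal{F}^T_\alpha(v,B_R)\le\bar C|z|^p\le\bar C M_0^p$ gives $\int_{B_T}M(\xi)|S^{\pm}_{2,h}(\xi)|\,d\xi\le C\bar C M_0^p\bigl(\alpha/(M_0 M^{h+1}(M-1))\bigr)^p$, hence $\mathrm{II}_M\le C(\bar C,M_0)/(M-1)^p$.

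The key uniformity is supplied by Remark~\ref{tuttolospazio}: extending $v$ by $z$ to $\rr^d$, one has $v\equiv z$ on $\rr^d\setminus B_{R-\alpha T}\supseteq\rr^d\setminus B_{R-\alpha r_0}$ (since $T\ge r_0$), and a direct check shows that the integrand of $G^{r_0,p}_\alpha(v,\rr^d)$ vanishes outside $(B_R)_\alpha(\xi)$ for $\xi\in B_{r_0}$; thus by (G0), $G^{r_0,p}_\alpha(v,\rr^d)\le \lambda_0^{-1}\bar C M_0^p$. Plugging this into~\eqref{eqtuttoerred} and integrating against $M(\xi)$ (via (G1)) bounds the middle term in the display above by $C_\star/N$, with $C_\star$ depending only on $f$, $\alpha$, $T$, $\bar C$, $M_0$. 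Given $\eta>0$, choose first $M$ so that $C(\bar C,M_0)/(M-1)^p\le\eta/2$, then $N$ so that $C_\star/N\le\eta/2$; the mean-value principle produces $h^\star\in\{0,\ldots,N-1\}$ with $\mathcal{F}^T_\alpha(v_{h^\star},B_R)\le\mathcal{F}^T_\alpha(v,B_R)+\eta$, and setting $v_M:=v_{h^\star}$, $M:=M_0 M^{N+1}$, finishes the argument. The delicate point is precisely the $z$-uniformity: this is achieved by placing every truncation scale strictly above $M_0$, so that admissibility is retained simultaneously for every $z\in B_{M_0}$, and by the fact that the energy bound $\bar C|z|^p$, being homogeneous of degree $p$, is already uniform on the closed ball $\overline{B_{M_0}}$.
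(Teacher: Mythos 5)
Your proof is correct and takes essentially the same approach as the paper. The paper's own argument is terse — it simply says to retrace Lemma~\ref{truncation}, replacing the constants in \eqref{bound1} and \eqref{bound2} with $C_1 := \sup_R G^{r_0,p}_\alpha(v,B_R)$ (bounded via {\rm ({\bf H})}, (G0) and the energy hypothesis by $\lambda_0^{-1}\bar C|z|^p\le\lambda_0^{-1}\bar C M_0^p$) and a constant $C_2$ coming from Remark~\ref{tuttolospazio}, and replacing the use of Corollary~\ref{boundlemma-lip} by Remark~\ref{tuttolospazio}. You carry out exactly this replacement in full, correctly identifying that Remark~\ref{tuttolospazio} is what makes the argument independent of $R$ and of $\|v\|_{L^p(B_R)}$ (the extension by $z$ eliminates boundary terms, since $v\equiv z$ outside $B_{R-\alpha T}\subset B_{R-\alpha r_0}$). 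The only cosmetic difference is that you decouple the number of truncation scales $N$ from the geometric base $M$ — the paper's Lemma~\ref{truncation} uses a single parameter for both — and you anchor the truncation radii at $M_0 M^{h+1}$ rather than $M^h$ so that $\Phi_h$ is manifestly the identity on $B_{M_0}$, making admissibility of $v_h$ in $L^p_{\alpha,T,z}(B_R;\rr^m)$ uniform over $z\in B_{M_0}$ explicit; both of these are minor, legitimate refinements of the same mechanism. One small imprecision: the bound $G^{r_0,p}_\alpha(v,B_R)\le\lambda_0^{-1}\mathcal{F}^T_\alpha(v,B_R)$ uses {\rm ({\bf H})} (via Remark~\ref{remcrescita}) as well as (G0), not (G0) alone, but this does not affect the argument.
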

\begin{proof}
Given $z\in B_{M_0}$ and $R>0$, by (\textbf{H}) and (G0) we have  that 
$$
G^{r_0,p}_{\alpha}( v,B_R)\le C|z|^p,
$$
where the constant $C$ depends only on $\bar C, r_0,\lambda_0$. So now it suffices to retrace the steps of the proof of Lemma \ref{truncation}, replacing  the constants in \eqref{bound1} and \eqref{bound2} with 
$$
C_1:=\sup_R G^{r_0,p}_{\alpha}( v,B_R)<+\infty,
$$
and 
$$
C_2= 6C(r_0) \int_{\rr^d}M(\xi)(|\xi|^p+1)\,d\xi , 
$$
respectively, where $C(r_0)$ is the constant obtained in Remark \ref{tuttolospazio}, and using Remark \ref{tuttolospazio} instead of Corollary \ref{boundlemma-lip}. The function $v_M$ is obtained through $\Phi_M( v)$ with a suitable choice of $M$.
\end{proof}

\subsection{Asymptotics of the approximating capacitary energy density}\label{limitcap}
%

We now show that, if $R_\e\to +\infty$ as $\e\to 0$ the functions $\varphi_{\e,T, R_\e}(z)$ approximate the energy density $\varphi^T(z)$ defined  in \eqref{denscapacitariaT}. A crucial role in the proof is played by Corollary \ref{compactnessloc}. 


\begin{proposition}\label{capterm}
Let $\varphi^T$ and $\varphi_{\e,T, R}$ be defined by \eqref{denscapacitariaT} and  \eqref{approxdens}, respectively. Then, if $R_\e\to +\infty$ as $\e\to 0$, it holds
\begin{equation}\label{convunifcap}
\lim_{\e\to 0} \varphi_{\e,T, R_\e} (z)=\varphi^T (z)
\end{equation}
uniformly on compact sets.
\end{proposition}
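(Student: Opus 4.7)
The plan is to establish pointwise convergence $\varphi_{\e,T,R_\e}(z) \to \varphi^T(z)$ for each $z \in \rr^m$ and upgrade to uniform convergence on compact sets via equicontinuity. Proposition \ref{th:equilip} provides the bound $|\varphi_{\e,T,R_\e}(w) - \varphi_{\e,T,R_\e}(z)| \leq C(|z|^{p-1} + |w|^{p-1})|w-z|$, with $C$ independent of $\e$. Hence the family is equi-Lipschitz on any compact subset of $\rr^m$, and pointwise convergence is enough by Arzel\`a--Ascoli.

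For the limsup inequality, fix $z$ and $\eta>0$, and choose $v$ admissible for $\varphi^T(z)$ with $\int_{\rr^d} f_{hom}^T(\nabla v)\,dx \leq \varphi^T(z) + \eta$. Using a cutoff $\phi_k$ between $B_k$ and $B_{2k}$ satisfying $|\nabla\phi_k| \leq 2/k$, the function $v_k := z + \phi_k(v-z)$ is still admissible, and the additional term $\int |\nabla\phi_k|^p |v-z|^p \,dx$ is negligible as $k\to\infty$ by H\"older on the annulus $B_{2k}\setminus B_k$, using $v-z \in L^{p^*}$ together with $|B_{2k}\setminus B_k|^{1-p/p^*} \sim k^p$. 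This reduces matters to the case where $v-z$ has compact support in some $B_K$. For $\e$ small so that $R_\e - \e T > K+1$, a recovery sequence $\tilde v_\e \to v$ provided by Theorem \ref{convexhom} (applied with $f^T$ in place of $f$ on a bounded domain containing $B_{K+1}$) can be modified via a joining argument \`a la Lemma \ref{raccordo} to equal $0$ on $B_1$ (automatic since $v \equiv 0$ there) and $z$ outside $B_{R_\e - \e T}$ (possible since $v \equiv z$ on the annulus $B_{R_\e - \e T} \setminus B_K$). The resulting $v_\e$ is admissible for $\varphi_{\e,T,R_\e}(z)$ and its energy converges to $\int_{\rr^d} f_{hom}^T(\nabla v)\,dx \leq \varphi^T(z) + \eta$.

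The core of the argument is the liminf inequality. Pick near-minimizers $v_\e \in L^p_{\e,T,z}(B_{R_\e};\rr^m)$ and extend them to $\rr^d$ by $z$, so that $v_\e - z$ is compactly supported. Proposition \ref{pgrowthdens} gives $\mathcal{F}_\e^T(v_\e,\rr^d) \leq C|z|^p$, while hypothesis (G0) together with $T \geq r_0$ yields
\[
G_\e^{r_0,p}(v_\e - z,\rr^d) = G_\e^{r_0,p}(v_\e,\rr^d) \leq \frac{1}{\lambda_0}\mathcal{F}_\e^T(v_\e,\rr^d) \leq C|z|^p.
\]
The Gagliardo--Nirenberg--Sobolev inequality of Theorem \ref{GNS}, applied to the compactly supported $v_\e - z$, gives the uniform bound $\|T_\e v_\e - z\|_{L^{p^*}(\rr^d;\rr^m)} \leq C|z|$. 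Combined with the quantitative estimate $\|v_\e - T_\e v_\e\|_{L^p(\rr^d)} \to 0$ (obtained cube-by-cube from Proposition \ref{pwscaling} with $\lambda = \tilde r \e$), this produces uniform $L^p_{\loc}$-bounds on $v_\e - z$. Corollary \ref{compactnessloc} then extracts a subsequence with $v_\e - z \to w - z$ in $L^p_{\loc}(\rr^d)$, where $w \in W^{1,p}_{\loc}(\rr^d;\rr^m)$ and $w - z \in L^{p^*}(\rr^d;\rr^m)$. Since $v_\e \equiv 0$ on $B_1$, also $w \equiv 0$ on $B_1$, so $w$ is admissible for $\varphi^T(z)$. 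Applying the $\Gamma$-liminf of Theorem \ref{convexhom} with $f^T$ on each ball $B_\rho$ gives
\[
\int_{B_\rho} f_{hom}^T(\nabla w)\,dx \leq \liminf_{\e \to 0} \mathcal{F}_\e^T(v_\e, B_\rho) \leq \liminf_{\e \to 0} \mathcal{F}_\e^T(v_\e, \rr^d),
\]
and letting $\rho \to +\infty$ we conclude $\varphi^T(z) \leq \int_{\rr^d} f_{hom}^T(\nabla w)\,dx \leq \liminf_{\e \to 0} \varphi_{\e,T,R_\e}(z)$.

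The main obstacle is ensuring that the limit $w$ of near-minimizers is admissible for $\varphi^T$, and in particular that $w - z \in L^{p^*}(\rr^d)$. The natural energy estimate from Proposition \ref{pgrowthdens} controls only the $L^p$-type quantity $G_\e^{r_0,p}(v_\e,\rr^d)$, whereas admissibility for $\varphi^T$ requires $L^{p^*}$-integrability of the deviation from $z$. Theorem \ref{GNS} is exactly the device that promotes the former control to $L^{p^*}$-control on the piecewise-constant interpolation $T_\e v_\e$; without it, the admissibility of $w$ could not be established and the compactness-plus-liminf strategy would fail.
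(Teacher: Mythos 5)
Your proposal is correct and follows essentially the same architecture as the paper's proof: reduce to pointwise convergence via the equi-Lipschitz bound of Proposition \ref{th:equilip}; for the liminf, extract a limit via $G_\e^{r_0,p}$-boundedness, Theorem \ref{GNS}, and Corollary \ref{compactnessloc}, then use the $\Gamma$-liminf of Theorem \ref{convexhom} on balls $B_\rho$; for the limsup, cut off to reduce to compactly supported competitors and build a recovery sequence matching the boundary data. Two minor remarks: in the liminf, the paper gets the local $L^p$-bounds more directly by applying the Poincar\'e--Wirtinger inequality (Theorem \ref{pwineq}) with $E=B_1$, whereas your route via GNS and $\|v_\e - T_\e v_\e\|_{L^p}\to 0$ is a valid alternative; and in the limsup your parenthetical ``automatic since $v\equiv 0$ there'' is not quite right --- a recovery sequence from Theorem \ref{convexhom} converges only in $L^p$ and does not inherit the pointwise constraint on $B_1$, so the joining argument must be performed near $\partial B_1$ as well (the paper sidesteps this by invoking \cite[Proposition 5.3]{AABPT}, which yields recovery sequences matching the prescribed data on both boundary components).
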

\begin{proof}
By Proposition \ref{th:equilip} it suffices to prove that \eqref{convunifcap} holds pointwise. We will show that it is a consequence of Theorem \ref{convexhom},  and Corollary \ref{compactnessloc}. With fixed $z\in\rr^m$, let $v_\e\in L^p_{\e,T, z} (B_{R_\e};\rr^m)$ be such that
$$
\varphi_{\e, T, R_\e} (z)=\mathcal{F}_\e^T(v_\e,B_{R_\e}) +o(\e),
$$
and let $u_\e\in L^p(\rr^d;\rr^m)$ equal to $v_\e-z$ on $B_{R_\e}$ and $u_\e\equiv 0$ on $\rr^d\setminus B_{R_\e}$. By (\textbf{H}), (G0) and \eqref{growthphiabove}, it holds 
\begin{equation}\label{boundueps}
\sup_\e G_\e^{r_0,p}(u_\e, \rr^d) <+\infty.
\end{equation}
By Theorem \ref{pwineq} applied with $A$ any bounded  open Lipschitz set   in $\rr^d$ and  $E=B_1$, we get 
$$
\sup_\e \|u_\e\|_{L^p(A;\rr^m)} <+\infty.
$$
Thus, using Corollary \ref{compactnessloc} we get that, up to a subsequence (not relabelled), $u_\e$ converge in $L^p_{\loc} (\rr^d;\rr^m)$  to a function $u\in W^{1,p}_{\loc}(\rr^d;\rr^m)\cap L^{p^*}(\rr^d;\rr^m)$ such that $u= -z$ on $B_1$. Moreover, with fixed $R>0$,  by Theorem \ref{convexhom}, we deduce that 
$$
\liminf_{\e\to 0} \varphi_{\e, T,R_\e} (z)=\liminf_{\e\to 0}\mathcal{F}^T_\e(v_\e,B_{R_\e})\ge\liminf_{\e\to 0}\mathcal{F}^T_\e(v_\e,B_{R})\ge\int_{B_R} f^T_{hom}(\nabla u)\, dx. 
$$
Letting $R\to +\infty$ we obtain 
$$
\liminf_{\e\to 0} \varphi_{\e, T,R_\e} (z)\ge\int_{\rr^d} f^T_{hom}(\nabla u)\, dx \ge \varphi^T(z).
$$
We now claim that 
$$
\varphi^T(z)=
\inf\Big\{\int_{\mathbb{R}^d} f_{hom}^T(\nabla u)\, dx: u\equiv -z\ \text{in}\ B_1,\   u\in W^{1,p}(\mathbb{R}^d;\mathbb{R}^m)  \hbox{ compactly supported}\Big\}. 
$$

To this aim, let us us fix  a cut-off function $\zeta$  between $B_1$ and $B_2$ and   $u\in L^{p*}(\mathbb{R}^d;\mathbb{R}^m)\cap W^{1,p}_{\loc}(\mathbb{R}^d;\mathbb{R}^m)$, $u\equiv -z$ in $B_1$,  with $\int_{\mathbb{R}^d} f_{hom}^T(\nabla u)\, dx <+\infty$. Note that, taking into account Remark \ref{phomfhom}, $\nabla u\in L^p(\rr^d; \rr^{d\times m})$.  We now set, for any $n\in \NN$, 
$u_n(x)=\zeta(x/n) u(x)$. An easy computation shows that  $u_n\in W^{1,p}(\mathbb{R}^d;\mathbb{R}^m)$, $u_n$ is compactly supported and $u_n\equiv -z$ in $B_1$. Moreover it holds  that $\nabla u_n\to \nabla u$ strongly in $ L^p(\rr^d; \rr^{d\times m})$. Indeed, by H\"older inequality, we have  
\[
\begin{split}
\int_{\rr^d}|\nabla u_n-\nabla u|^p \, dx &\le C\int_{\rr^d\setminus B_n}   |\nabla u|^p\, dx + \frac{C}{n^p}\int_{B_{2n}\setminus B_n}| u|^p\, dx \\
&\le C\int_{\rr^d\setminus B_n}   |\nabla u|^p\, dx + \frac{C}{n^p}|B_{2n}\setminus B_n|^{1-\frac{p}{p^*}}\Big(\int_{B_{2n}\setminus B_n}| u|^{p^*}\, dx \Big)^{\frac{p}{p^*}}\\
& \le C\int_{\rr^d\setminus B_n}   |\nabla u|^p\, dx + C\Big(\int_{B_{2n}\setminus B_n}| u|^{p^*}\, dx \Big)^{\frac{p}{p^*}},  \end{split}
\]
and the last two terms tend to $0$ as $n\to +\infty$.  The claim follows by using the dominated convergence theorem together with Remark \ref{phomfhom}.

Thus, taking the claim into account and  using a density argument, given $\eta>0$, we may assume that  there exists $u\in C^\infty_c(\rr^d;\rr^m)$ such that $u\equiv - z$ on $B_1$ and
$$
\int_{\rr^d} f^T_{hom}(\nabla u)\, dx\le \varphi^T(z)+\eta.
$$
Let $\bar R>0$ such that $\supp\, u\subseteq B_{\bar R}$. 
Then, by  \cite[Proposition 5.3]{AABPT} applied with $A=B_{\bar R}\setminus B_1$,  there exists a family of functions $u_\e\in L^p(B_{\bar R};\rr^m)$, with $u_\e\equiv -z$ on $B_1$ and $u_\e\equiv 0$ on $\partial^{\e T} B_{\bar R}$ such that
$$
\lim_{\e\to 0}\mathcal{F}^T_\e(u_\e,\rr^d)=\int_{\rr^d} f^T_{hom}(\nabla u)\, dx.
$$
Hence, set $v_\e:=u_\e+z$, we get that $v_\e\in L^p_{\e,T, z} (B_{R_\e};\rr^m)$ for $\e$ small enough, thus
$$
\limsup_{\e\to 0} \varphi_{\e, T, R_\e} (z)\leq\limsup_{\e\to 0}\mathcal{F}^T_\e(v_\e, B_{R_\e})\le\int_{\rr^d} f^T_{hom}(\nabla u)\, dx\le \varphi^T(z)+\eta
$$
and the thesis follows by the arbitrariness of $\eta>0$.
\end{proof}

\noindent Note that for any $\alpha>0$, $T>r_0$ and $z\in\rr^m$, the function $R\in [2+\alpha T,+\infty)\mapsto \varphi_{\alpha,T,R}(z)$ is decreasing, as it is easily seen  by \eqref{equivvarphi}. Hence, for $z\in \rr^m$, it is well defined $\displaystyle\lim_{R\to +\infty} \varphi_{\alpha,T,R}(z)$ and this convergence is also locally uniform by Proposition \ref{th:equilip}. One can easily shows that
$$
\varphi_{NL,\alpha}^T(z)= \lim_{R\to +\infty} \varphi_{\alpha,T,R}(z),  
$$
where $\varphi_{NL,\alpha}^T$ is defined by \eqref{denscapacitariaNLT}.
\medskip



 
\noindent The properties of the densities $\varphi_{\e,T,R}$ obtained so far allow to prove the following result about the $L^1$-convergence of suitable Riemann sums to the capacitary densities $\varphi^T$ and $\varphi_{NL,\alpha}^T$.
\begin{proposition}\label{th:riemannsum}
Let $\e_j\to 0$ and  $R_j\to +\infty$ as $j\to +\infty$ and let $(u_j)$ be a bounded sequence in $L^\infty(\Omega;\rr^m)$ such that $\sup_j\mathcal{F}^T_{\e_j}(u_j)<+\infty$ and $u_j\to u$ in $L^p(\Omega;\rr^m)$, for some $u\in W^{1,p}(\Omega;\rr^m)$. Let $A_j^{i,k_i}$  and $u_j^{i,k_i}$, $i\in Z_j (\Omega)$,  be as in \eqref{anelli} and \eqref{medianelli}, respectively,  with $\rho_j=O(\delta_j)$ and  $h=k_i$, for an arbitrary choice of $k_i$. 
\begin{itemize}
\item[(i)] Let $\Psi^T_j:\Omega\to\rr$  be defined by
$$
\Psi^T_j(x)= \sum_{i\in Z_j (\Omega)} \varphi_{\e_j, T, R_j}(u_j^{i,k_i})\chi_{Q_{\delta_{\e_j}}(i)}(x),
$$
where
$$
Q_{\delta_{\e_j}}(i):=\delta_{\e_j}i +\delta_{\e_j} Q_1.
$$
Then $\Psi^T_j\to \varphi^T(u)$  in $L^1(\Omega)$.
\item[(ii)]Let $\Psi^T_{j,\alpha}:\Omega\to\rr$ be defined by
$$
\Psi^T_{j,\alpha}(x)= \sum_{i\in Z_j(\Omega)} \varphi_{\alpha, T, R_j}(u_j^{i,k_i})\chi_{Q_{\delta_{\e_j}}(i)}(x),
$$
Then  $\Psi^T_{j,\alpha}\to \varphi_{{NL},\alpha}^T(u)$ in $L^1(\Omega)$.
\end{itemize}
\end{proposition}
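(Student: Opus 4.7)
My plan rests on two ingredients: an $L^p$-approximation result for the piecewise-constant reconstruction $\tilde u_j(x):=\sum_{i\in Z_j(\Omega)}u_j^{i,k_i}\chi_{Q_{\delta_{\e_j}}(i)}(x)$, and locally uniform convergence of the approximating capacitary densities. I will first prove that $\tilde u_j\to u$ in $L^p(\Omega;\rr^m)$. For each $i\in Z_j(\Omega)$ I apply the scaled Poincar\'e--Wirtinger inequality of Proposition \ref{pwscaling} with $A=Q_1$, $\lambda=\delta_{\e_j}$, $x_0=\delta_{\e_j}i$, and $E$ equal to the dilate of $A_j^{i,k_i}$ inside $Q_1$. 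Since $k_i$ ranges over the finite set $\{0,\dots,N-1\}$, the measures of these dilated annuli are bounded from below uniformly in $i$ and $j$, hence the Poincar\'e constant can be taken independent of $i$, $j$ and $k_i$. Summing over the disjoint cubes $Q_{\delta_{\e_j}}(i)\subset\Omega$ gives
\[
\int_{\cup_i Q_{\delta_{\e_j}}(i)}|u_j-\tilde u_j|^p\,dx\le C\,\delta_{\e_j}^{\,p}\,G^{r_0,p}_{\e_j}(u_j,\Omega).
\]
By (\textbf{H}) and (G0) one has $G^{r_0,p}_{\e_j}(u_j,\Omega)\le \lambda_0^{-1}\mathcal{F}^T_{\e_j}(u_j)$, which is uniformly bounded, so $\|u_j-\tilde u_j\|_{L^p}\to 0$; combined with $u_j\to u$ in $L^p(\Omega;\rr^m)$, this gives $\tilde u_j\to u$ in $L^p(\Omega;\rr^m)$. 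The boundary layer $\Omega\setminus\cup_i Q_{\delta_{\e_j}}(i)$ has vanishing measure and $\tilde u_j$ is uniformly bounded there in $L^\infty$, so it contributes negligibly.

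For part (i), Proposition \ref{capterm} gives $\varphi_{\e_j,T,R_j}\to\varphi^T$ uniformly on compact subsets of $\rr^m$. For part (ii), the same locally uniform convergence of $\varphi_{\alpha,T,R_j}$ to $\varphi^T_{NL,\alpha}$ holds: monotonicity of $R\mapsto\varphi_{\alpha,T,R}(z)$ combined with the uniform Lipschitz bound on bounded sets from Proposition \ref{th:equilip} yields uniform convergence on compact sets, as stated in the remark right after Proposition \ref{capterm}. Since $\sup_j\|u_j\|_{L^\infty}<+\infty$, all averages $u_j^{i,k_i}$ and the limit $u$ take values in a common compact set $K\subset\rr^m$ on which these locally uniform convergences apply.

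To finish I split
\[
\Psi_j^T(x)-\varphi^T(u(x))=\bigl[\varphi_{\e_j,T,R_j}(\tilde u_j(x))-\varphi^T(\tilde u_j(x))\bigr]+\bigl[\varphi^T(\tilde u_j(x))-\varphi^T(u(x))\bigr].
\]
The first bracket is controlled in $L^1(\Omega)$ by $|\Omega|\sup_K|\varphi_{\e_j,T,R_j}-\varphi^T|\to 0$. For the second, pass Proposition \ref{th:equilip} to the limit to see that $\varphi^T$ is Lipschitz on $K$; then the second bracket is bounded by $C\|\tilde u_j-u\|_{L^1(\Omega)}\le C\|\tilde u_j-u\|_{L^p(\Omega)}\to 0$. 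Part (ii) is identical, replacing $\varphi^T$ with $\varphi^T_{NL,\alpha}$ and using the locally uniform convergence $\varphi_{\alpha,T,R_j}\to\varphi^T_{NL,\alpha}$. The main obstacle is the first step: verifying $L^p$-convergence of the piecewise-constant interpolation built from averages over very thin annuli requires exactly the scaled nonlocal Poincar\'e--Wirtinger inequality of Proposition \ref{pwscaling}, whose right-hand side is at scale $\delta_{\e_j}^{\,p}$, just enough to absorb the uniform bound on $G^{r_0,p}_{\e_j}(u_j,\Omega)$ coming from the energy.
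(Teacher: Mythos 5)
Your proof is correct and uses exactly the same three ingredients as the paper's proof — the locally uniform convergence of Proposition \ref{capterm}, the equi-Lipschitz bound of Proposition \ref{th:equilip}, and the scaled Poincar\'e--Wirtinger inequality of Proposition \ref{pwscaling} — merely reorganized around the piecewise-constant reconstruction $\tilde u_j$ rather than the paper's three-term split $I_j^1+I_j^2+I_j^3$. The two decompositions are equivalent in content (your second bracket plus the vanishing boundary layer is the paper's $I_j^1+I_j^3$; your first bracket is the paper's $I_j^2$), so this is the same route.
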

\begin{proof}
We prove (i), the proof of (ii) being analogous.  We have the following estimate
\[
\begin{split}
\int_\Omega | \Psi^T_j(x)-\varphi^T(u(x))|\, dx&\leq\sum_{i\in Z_j(\Omega)}\int_{Q_{\delta_{\e_j}(i)}} \left|\varphi_{\e_j, T,R_j}(u_j^{i,k_i})-\varphi_{\e_j, T, R_j}(u_j (x))\right|\, dx\\
&+\sum_{i\in Z_j(\Omega)}\int_{Q_{\delta_{\e_j}(i)}} \left|\varphi^T(u(x))-\varphi_{\e_j,T, R_j}(u_j(x))\right|\, dx\\
&+ \int_{\Omega\setminus\bigcup_{i\in Z_j(\Omega)}Q_{\delta_{\e_j}(i)}}|\varphi^T(u(x))|\, dx=:I^1_j+I^2_j+I^3_j.
\end{split}
\]
By Proposition \ref{capterm}, we easily deduce that $I^2_j\to 0$. Since $|\Omega\setminus \bigcup_{i\in Z_j(\Omega)}Q_{\delta_{\e_j}(i)}|\to 0$, we also infer that $I^3_j\to 0$. Finally, by Proposition \ref{th:equilip}, we may estimate $I^1_j$ as follows
\[
\begin{split}
I^1_j\leq C \sum_{i\in Z_j(\Omega)}\int_{Q_{\delta_{\e_j}(i)}} \left| u_j^{i,k_i}-u_j(x)\right|\, dx.
\end{split}
\]
By H\"older's inequality and Proposition \ref{pwscaling}, we have
\[
\begin{split}
\int_{Q_{\delta_{\e_j}(i)}} \left| u_j^{i,k_i}-u_j(x)\right|\, dx&\leq \delta_{\e_j}^{\frac{d(p-1)}{p}}\left(\int_{Q_{\delta_{\e_j}(i)}} \left| u_j^{i,k_i}-u_j(x)\right|^p\, dx\right)^{\frac1p}\\
&\leq C \delta_{\e_j}^{\frac{d(p-1)}{p}}\delta_{\e_j} \left(G_{\e_j}^{r_0,p} (u_j, Q_{\delta_{\e_j}(i)})\right)^{\frac1p},
\end{split}
\]
Hence
$$
I^1_j\leq C \delta_{\e_j} \left( G_{\e_j}^{r_0,p}(u_j, \Omega)\right)^{\frac1p}\le C \delta_{\e_j} \left( {\mathcal F}_{\e_j}^T(u_j)\right)^{\frac1p}\to 0.
$$

\end{proof}
\medskip
\noindent We conclude this subsection showing the convergence of  $\varphi_{NL,\alpha}^T$ to $\varphi_{NL,\alpha}$ as $T\to +\infty$.
 
\begin{proposition}\label{captermNL}
For any $z\in\rr^m$, it holds
\[
\lim_{T\to +\infty}\varphi_{NL,\alpha}^T(z)=\sup_{T>r_0}\varphi_{NL,\alpha}^T(z)=\varphi_{NL,\alpha}(z),
\]
where $\varphi_{NL,\alpha}(z)$ is defined in \eqref{denscapacitariaNL}.
\end{proposition}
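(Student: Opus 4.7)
The argument splits into an easy monotonicity direction and a nontrivial tail-control direction. Since $B_T\subseteq B_{T'}\subseteq\rr^d$ for $r_0<T\leq T'$, one has $\mathcal{F}_\alpha^T(v,\rr^d)\leq \mathcal{F}_\alpha^{T'}(v,\rr^d)\leq \mathcal{F}_\alpha(v,\rr^d)$ for every $v$; and the admissible class in \eqref{denscapacitariaNLT} is identical to the one in \eqref{denscapacitariaNL}. Taking infima over this common class yields that $T\mapsto \varphi_{NL,\alpha}^T(z)$ is nondecreasing and bounded above by $\varphi_{NL,\alpha}(z)$, giving at once
\[
\lim_{T\to+\infty}\varphi_{NL,\alpha}^T(z)=\sup_{T>r_0}\varphi_{NL,\alpha}^T(z)\leq \varphi_{NL,\alpha}(z).
\]
Only the reverse inequality $\varphi_{NL,\alpha}(z)\leq \sup_{T>r_0}\varphi_{NL,\alpha}^T(z)$ is at stake.

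\textbf{Strategy for the nontrivial direction.} Fix $\eta>0$ and, for each $T>r_0$, pick a near-minimizer $v_T$ admissible in \eqref{denscapacitariaNLT} with $\mathcal{F}_\alpha^T(v_T,\rr^d)\leq \varphi_{NL,\alpha}^T(z)+\eta$. Because $v_T$ is automatically admissible for $\varphi_{NL,\alpha}(z)$,
\[
\varphi_{NL,\alpha}(z)\leq \mathcal{F}_\alpha(v_T,\rr^d)=\mathcal{F}_\alpha^T(v_T,\rr^d)+\int_{\rr^d\setminus B_T}\!\int_{\rr^d} f(\xi,D_\alpha^\xi v_T(x))\,dx\,d\xi,
\]
so it suffices to prove that the last tail integral vanishes as $T\to+\infty$ uniformly with respect to the choice of such near-minimizers.

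\textbf{Uniform bounds.} Proposition \ref{pgrowthdens} gives $\varphi_{\alpha,T,R}(z)\leq c_2|z|^p$ with $c_2$ depending only on $p,d,r_0$; letting $R\to+\infty$ via the identity $\varphi_{NL,\alpha}^T(z)=\lim_R\varphi_{\alpha,T,R}(z)$ recorded just before Proposition \ref{th:riemannsum}, one gets $\varphi_{NL,\alpha}^T(z)\leq c_2|z|^p$ uniformly in $T>r_0$. Restricting the $\xi$-integration in $\mathcal{F}_\alpha^T(v_T,\rr^d)$ to $B_{r_0}\subset B_T$ and using $(\mathbf{H})$--$(G0)$ yields
\[
\lambda_0\, G_\alpha^{r_0,p}(v_T,\rr^d)\leq c_2|z|^p+\eta
\]
with bound uniform in $T$. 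Since $v_T-z$ is compactly supported, $v_T\equiv z$ outside some ball (radius possibly depending on $T$, but existence is all that is needed), so Remark \ref{tuttolospazio} applies and delivers, for every $\xi\in\rr^d$,
\[
\int_{\rr^d}|D_\alpha^\xi v_T(x)|^p\,dx \leq C(r_0)(|\xi|^p+1)\, G_\alpha^{r_0,p}(v_T,\rr^d) \leq C(|z|^p+\eta)(|\xi|^p+1),
\]
with $C$ independent of $T$.

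\textbf{Conclusion.} Combining the previous estimate with $f(\xi,w)\leq M(\xi)|w|^p$ from Remark \ref{remcrescita}, the tail contribution is controlled by
\[
\int_{\rr^d\setminus B_T}\!\int_{\rr^d}f(\xi,D_\alpha^\xi v_T)\,dx\,d\xi \leq C(|z|^p+\eta)\int_{\rr^d\setminus B_T}M(\xi)(|\xi|^p+1)\,d\xi,
\]
and the right-hand side tends to zero as $T\to+\infty$ by $(G1)$. Plugging this back into the main inequality gives $\varphi_{NL,\alpha}(z)\leq \varphi_{NL,\alpha}^T(z)+\eta+o_T(1)$; letting first $T\to+\infty$ and then $\eta\to 0$ concludes. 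The main technical obstacle is the $T$-uniform control of $G_\alpha^{r_0,p}(v_T,\rr^d)$ for near-minimizers, which is what enables the dominated-convergence-type cancellation of the tail through $(G1)$; this uniformity is secured precisely by the fact that the upper bound $c_2|z|^p$ in Proposition \ref{pgrowthdens} is independent of $T$ and $R$.
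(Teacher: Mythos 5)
Your proof is correct and follows essentially the same route as the paper's: both observe monotonicity in $T$ for the easy inequality, pick near-minimizers $v_T$ of $\varphi_{NL,\alpha}^T(z)$, control the tail $\int_{\rr^d\setminus B_T}$ via Remark \ref{tuttolospazio} and $(\mathbf{G1})$, and secure the required $T$-uniform bound on $G_\alpha^{r_0,p}(v_T,\rr^d)$ from the $T$- and $R$-independent upper bound $\varphi_{\alpha,T,R}(z)\leq c_2|z|^p$ in Proposition \ref{pgrowthdens}.
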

\begin{proof}  We first observe that the function $\varphi_{NL,\alpha}^T$ is increasing in $T$, hence it is well defined the limit
$$
\displaystyle \lim_{T\to +\infty}\varphi_{NL,\alpha}^T(z)=\sup_{T>r_0}\varphi_{NL,\alpha}^T(z),
$$
for any $z\in\rr^m$. Since $\mathcal{F}_\alpha^T(v,\rr^d)\le \mathcal{F}_\alpha(v,\rr^d)$ for every $T>0$, 
$$
\sup_{T>r_0}\varphi _{NL,\alpha}^T(z)\le \varphi_{NL,\alpha}(z).
$$
Let $v$ be an admissible function in the minimum problem defining $\varphi_{NL,\alpha}(z)$. In particular, $v-z$ satisfies \eqref{eqtuttoerred}, that is
\[
\int_{\rr^d}\Big|\frac{v(x+\e\xi)-v(x)}{\e}\Big|^p dx \le C(|\xi|^p+1) G_\e^{r_0,p}(v,\rr^d),
\]
where the constant $C$ depends on $r_0$. We now multiply each side of the previous inequality by the growth function $M(\xi)$, we apply Remark \ref{remcrescita}, integrate on $\rr^d\setminus B_T$, and finally obtain 
\[
\int_{\{|\xi|\ge T\}}\int_{\rr^d}f\left (\xi,\frac{v(x+\e\xi)-v(x)}{\e}\right) dx\,d\xi \le C\int_{\{|\xi|\ge T\}}M(\xi)(|\xi|^p+1)d\xi \,G_\e^{r_0,p}(v,\rr^d).
\]
Thanks to (G1), this gives
$$
\mathcal{F}_\alpha(v,\rr^d)\le \mathcal{F}_\alpha^T(v,\rr^d)+ o(T) \,G_\e^{r_0,p}(v,\rr^d).
$$
We now choose a function $v_T$ such that $v_T\equiv 0$ in $B_1$, $v_T-z\in L^p(\rr^d;\rr^m)$, $v_T-z$ is compactly supported, and 
$$
\mathcal{F}^T_\alpha(v_T,\rr^d)\le \varphi_{NL,\alpha}^T(z)+o(T).
$$
Thus, in particular
\begin{equation}\label{FFT}
\begin{split}
\varphi_{NL,\alpha}(z)\le \mathcal{F}_\alpha(v_T,\rr^d)&\le  \mathcal{F}_\alpha^T(v_T,\rr^d)+ o(T) \,G_\e^{r_0,p}(v_T,\rr^d)\\
&\le  \varphi_{NL,\alpha}^T(z)+o(T)+ o(T) \,G_\e^{r_0,p}(v_T,\rr^d).
\end{split}
\end{equation}
By finally using that 
\[
G_\e^{r_0,p}(v_T,\rr^d)\le C\,\mathcal{F}_\alpha^T(v_T,\rr^d)\le C\,\varphi_{NL,\alpha}^T(z)+o(T)\le C(|z|^p+1),
\]
the desired conclusion follows letting $T$ tend to $+\infty$ in \eqref{FFT}.
\end{proof}

\section{Proof of Theorem \ref{th:main} and Theorem \ref{th:mainNL}}

\noindent We will prove the two statements simultaneously, distinguishing the two regimes provided by assumption \eqref{zero} and \eqref{alfa}, respectively, only when necessary. 

\noindent Dividing the proof into three steps, we  first show that it suffices to prove the theorems for the truncated functionals $F_{\e,\delta_\e}^T $, and then we deal separately with the $\Gamma$-$\liminf$ and the $\Gamma$-$\limsup$ inequalities.

\medskip

\noindent {\bf   Step 1.}} It is not restrictive to prove both theorems under the additional assumption that there exists $T>0$ such that $f(\xi,z)=0$ if $|\xi|>T$. 

\noindent Indeed, under the hypotheses of Theorem \ref{th:main}, assume that for every $T>0$
\begin{equation}\label{gammaconvexT}
\Gamma(L^p)-\lim_{\e\to 0} F_{\e,\delta_\e}^T (u)=
\begin{cases}\displaystyle \int_\Omega f_{hom}^T(\nabla u)\, dx +\int_\Omega \varphi^T(u)\, dx & \text{if}\ u\in W^{1,p}(\Omega;\mathbb{R}^m),\cr\
+\infty & \text{otherwise},
\end{cases}
\end{equation}
where $F_{\e,\delta}^T$ is defined by \eqref{functionals2T} and $f_{hom}^T$ and $\varphi^T$ are defined by  \eqref{homformT} and \eqref{denscapacitariaT}, respectively. By \cite[Lemma 5.1]{AABPT},
$$
\Gamma(L^p)-\lim_{\e\to 0} F_{\e,\delta_\e} (u)=\lim_{T\to +\infty}\Gamma(L^p)-\lim_{\e\to 0} F_{\e,\delta_\e}^T (u),
$$
hence, by Monotone Convergence Theorem, the statement follows once we prove that for every $S\in\rr^{d\times m}$ and $z\in\rr^m$ 
$$
\lim_{T\to +\infty} f_{hom}^T (S)= f_{hom} (S),\quad \lim_{T\to +\infty} \varphi^T (z)= \varphi (z).
$$
The equalities above  are, in turn, again a straightforward consequence of Monotone Convergence Theorem.  
 We may argue analogously in the setting of Theorem \ref{th:mainNL}, taking into account Proposition \ref{captermNL}.

\medskip

\noindent  {\bf  Step 2.} With fixed $T>r_0$, we now prove the validity of the $\Gamma$-$\liminf$ inequality for $F_{\e,\delta_\e}^T$ for both theorems. 

\noindent Given $\e_j\to 0^+$ as $j\to+\infty$, let $u\in W^{1,p}(\Omega;\rr^m)$ and let  $u_j\to u$ in $L^p(\Omega;\rr^m)$ be such that $\sup_j F_{\e_j,\delta_{\e_j}}^T(u_j)<+\infty$.
Up to passing to a subsequence (not relabelled), given $\eta>0$ and $M>0$, we may apply Lemma \ref{truncation} and find $R_M>M$ and a Lipschitz function $\Phi_M:\rr^m\to\rr^m$, with $Lip(\Phi_{M})= 1$, $\Phi_{M} (z) = z$ if $|z| < M$ and $\Phi_{M}(z) = 0$ if $|z| > R_M$ such  that
\begin{equation}\label{stima1}
F_{\e_j,\delta_{\e_j}}^T(u_j)>F_{\e_j,\delta_{\e_j}}^T(\Phi_M(u_j))-\eta.
\end{equation}
Notice that $\phi_M(u_j)\to \Phi_M(u)$ in $L^p(\Omega;\rr^m)$. Given $N\in\NN$, let $\{w_j^M\}_j$ the sequence constructed in Lemma \ref{raccordo} applied with $\rho_j=\delta_{\e_j}/4$ and $\{\Phi_M(u_j)\}_j$ in place of $\{u_j\}_j$.
Set
$$
E_j=\bigcup_{i\in Z_j (\Omega)}B_{\rho_{j,k_i}}(\delta_{\e_j} i),
$$
where $\rho_{j,k_i}$ is defined in Lemma \ref{raccordo}, and define
\begin{equation*} 
v_j^M(x):=
\left\{ 
\begin{array}{ll}
w_j^M (x)&\hbox{ if } x\in\Omega\setminus E_j\\
(\Phi_M(u_j))^{i,k_i}&\hbox{ if } x\in B_{\rho_{j,k_i}}(\delta_{\e_j} i).\\
\end{array}
\right.
\end{equation*}
Notice that, by (\textbf{H}), (G0) and Lemma \ref{raccordo}, $\sup_j G^{r_0,p}_{\e_j}(v_j^M,\Omega)<+\infty$, Hence, by  Theorem \ref{kolcom}, $\{v_j^M\}_j$ is relatively compact in $L^p(\Omega;\rr^m)$. Arguing as in \cite{AB2}, we  now show that $v_j^M\to \Phi_M(u)$  in $L^p(\Omega;\rr^m)$. Specifically, for $h\in\{0,\dots, N-1\}$ set
$$
r_h:=\frac{3}{4}2^{-h-2},\quad \chi_j^h(x):=\chi^h\left(\frac{x}{\delta_{\e_j}}\right),
$$
where  $\chi^h$ coincides with $\chi_{Q_1\setminus B_{r_h}}$ on $Q_1$ and is extended $Q_1$-periodically in $\rr^d$,
$$
Z_j^h:=\{i\in Z_j(\Omega): k_i=h\},\quad D_j^h:=\bigcup_{i\in Z_j^h}\delta_{\e_j}\left( i+Q_1\right),\quad \psi_j^h(x):=\chi_{D_j^h}(x).
$$
Recall that
$$
\chi_j^h\stackrel{*}\rightharpoonup m_h:=| Q_1\setminus B_{r_h}|>0\quad \hbox{weakly* in } L^\infty(\rr^d)
$$
and note that,
$$
\sum_{h=0}^{N-1}\psi_j^h\to 1\ \mbox{strongly in } L^1(\Omega).
$$
Moreover, since $\chi_j^h\geq\chi_j^0$ for every $h\in\{0,\dots, N-1\}$, we have that
\begin{equation}\label{stimaperiodica}
\chi_{\Omega\setminus E_j}=\sum_{h=0}^{N-1}\psi_j^h\chi_j^h\geq\chi_j^0 \sum_{h=0}^{N-1}\psi_j^h\stackrel{*}\rightharpoonup m_0>0.
\end{equation}
Let us consider a subsequence (not relabelled) such that $ \chi_{\Omega\setminus E_j}\stackrel{*}\rightharpoonup g$ in $ L^\infty(\Omega)$ and $ v_j^M\to v$ strongly in  $L^p(\Omega;\rr^m)$.  Hence
$$
\chi_{\Omega\setminus E_j} v_j^M\rightharpoonup g\, v,\quad \chi_{\Omega\setminus E_j} w_j^M\rightharpoonup g\, \Phi_M(u)\quad \hbox{weakly in } L^p(\Omega;\rr^m). 
$$
Taking into account that $\chi_{\Omega\setminus E_j} v_j^M= \chi_{\Omega\setminus E_j} w_j^M$, we conclude that $v=\Phi_M(u)$ thanks to the lower bound  on $g$   ensured by  \eqref{stimaperiodica}. 

\noindent By Lemma \ref{raccordo} and \eqref{stima1}, we  have
\begin{equation}\label{stimaspezzata}
\begin{split}
F_{\e_j,\delta_{\e_j}}^T(u_j) & \ge F_{\e_j,\delta_{\e_j}}^T(\Phi_M(u_j))-\eta\ge \mathcal{F}_{\e_j}^T(w_j^M)-\frac{C}{N}-\eta\\ 
&  \ge \mathcal{F}_{\e_j}^T(w_j^M, \Omega\setminus{E_j})+\mathcal{F}_{\e_j}^T(w_j^M,E_j)-\eta-\frac{C}{N} \\
& \ge \mathcal{F}_{\e_j}^T(v_j^M)+\mathcal{F}_{\e_j}^T(w_j^M,E_j)-\eta-\frac{C}{N}, 
\end{split}
\end{equation}
since by definition  $w_j^M=v_j^M$ in $\Omega\setminus{E_j}$, and  $v_j^M$ is constant on each $\partial^{\e_jT} B_{\rho_{j,k_i}}(\delta_{\e_j} i)$.  

\noindent By Theorem \ref{convexhom}, it holds
\begin{equation}\label{liminfbulk}
\liminf_{j\to +\infty}\mathcal{F}_{\e_j}^T(v_j^M)\geq \int_\Omega f_{hom}^T(\nabla \Phi_M(u))\, dx.
\end{equation}
We now turn to the estimate of the contribution on $E_j$. At this point we need to distinguish whether  \eqref{zero} or \eqref{alfa} holds.
\smallskip

\noindent {\bf Case }$\boldsymbol{\e=o(r_{\delta_{\e}})}$.   
With fixed $i\in Z_j(\Omega)$, let 
$$
v_{j,i}^M(y):=w_j^M(\delta_{\e_j} i+r_{\delta_{\e_j}}y)
$$
be defined on the ball $B_{R_j^i}$, with $R_j^i:= \rho_{j,k_i}r_{\delta_{\e_j}}^{-1}$  and extended to $(\Phi_M(u_j))^{i,k_i}$ outside this ball.
Setting $s_j:=\e_j r_{\delta_{\e_j}}^{-1}$, we get
\begin{equation}\label{stimaprima}
\mathcal{F}_{\e_j}^T(w_j^M, B_{\rho_{j,k_i}}(\delta_{\e_j} i))=r_{\delta_{\e_j}}^{d-p}\mathcal{F}_{s_j}^T(v_{j,i}^M, B_{R_j^i}) \ge r_{\delta_{\e_j}}^{d-p}\varphi_{s_j, T,R_j^i}(\Phi_M(u_j))^{i,k_i}). 
\end{equation}
 We take 
$$
\Psi_j^T(x)= \sum_{i\in Z_j(\Omega)} \varphi_{s_j,T, R_j^i}((\Phi_M(u_j))^{i,k_i})\chi_{Q_{\delta_{\e_j}}(i)}(x).
$$
By \eqref{stimaprima}, we get
\begin{equation}\label{stimapproxcap}
\mathcal{F}_{\e_j}^T(w_j^M, E_j)\geq \frac{r_{\delta_{\e_j}}^{d-p}}{\delta_{\e_j}^d}\int_\Omega \Psi_j^T(x)\, dx=
\left(\frac{r_{\delta_{\e_j}}}{\delta_{\e_j}^{\frac{d}{d-p}}}\right)^{d-p}\int_\Omega \Psi_j^T(x)\, dx.
\end{equation}
By Proposition \ref{th:riemannsum} $(i)$, applied to $(\Phi_M(u_j))$, with $s_j,\ R_j^i$ in place of $\e_j, R_j$, respectively, we have 
\begin{equation}\label{convdens}
\Psi_j^T\to \varphi^T(\Phi_M(u))\ \mbox{in}\ L^1(\Omega).
\end{equation}
Hence, by \eqref{stimapproxcap},  we deduce that
\begin{equation}\label{liminfcap}
\liminf_{j\to +\infty}\mathcal{F}_{\e_j}^T(w_j^M, E_j)\geq \beta^{d-p}\int_\Omega \varphi^T(\Phi_M(u)) \, dx.
\end{equation}
\smallskip

\noindent{\bf Case } $\boldsymbol{\e=O(r_{\delta_{\e}}) }$. 
With fixed $i\in Z_j(\Omega)$, we now set
$$
v_{j,i}^M(y):=w_j^M\Big(\delta_{\e_j} i+\frac{\e_j}{\alpha}y\Big)
$$
on the ball $B_{R_j^i}$, with $R_j^i:=\alpha \rho_{j,k_i} \e_j^{-1}$, and extend it, as in the previous case,  to $(\Phi_M(u_j))^{i,k_i}$ outside this ball.
We get
\begin{equation}\label{stimaprimaalfa}
\mathcal{F}_{\e_j}^T(w_j^M, B_{\rho_{j,k_i}}(\delta_{\e_j} i))=\left(\frac{\e_j}{\alpha}\right)^{d-p}\mathcal{F}_{\alpha}^T(v_{j,i}^M, B_{R_j^i}).
\end{equation}
Define $t_j:=\alpha \frac{ r_{\delta_{\e_j}}}{\e_j}$ and note that, by \eqref{alfa}, $t_j\to 1$. We take 
\begin{equation*} 
\tilde v_{j,i}^M(y):=
\left\{ 
\begin{array}{ll}
0 &\hbox{ if } y\in B_1\\
v_{j,i}^M(y)&\hbox{if } y\in B_{R^i_j}\setminus B_1.\\
\end{array}
\right.
\end{equation*}
Notice that, if $t_j>1$, then $\tilde v_{j,i}^M$ coincides with $ v_{j,i}^M$. A straightforward computation shows that
\begin{equation}\label{errorino}
\mathcal{F}_{\alpha}^T(v_{j,i}^M, B_{R_j^i})\geq \mathcal{F}_{\alpha}^T(\tilde v_{j,i}^M, B_{R_j^i}) -C(M) |1-t_j|.
\end{equation}
Since $\tilde v_{j,i}^M\in L^p_{\alpha, T, (\Phi_M(u_j))^{i,k_i}}(B_{R_j^i};\rr^m)$, by \eqref{stimaprimaalfa} and \eqref{errorino}, we get
\begin{equation}\label{stimaseconda}
\begin{split}
\mathcal{F}_{\e_j}^T(w_j^M, B_{\rho_{j,k_i}}(\delta_{\e_j} i))&\geq \left(\frac{\e_j}{\alpha}\right)^{d-p}(\mathcal{F}_{\alpha}^T(\tilde v_{j,i}^M, B_{R_j^i})-C(M) |1-t_j| )\\
&\geq \left(\frac{\e_j}{\alpha}\right)^{d-p}(\varphi_{\alpha,T, R_j^i}((\Phi_M(u_j))^{i,k_i}) - C(M) |1-t_j| ).
\end{split}
\end{equation}
By taking
$$
\Psi_{j,\alpha}^T(x)= \sum_{i\in Z_j(\Omega)} \varphi_{\alpha, T, R_j^i}((\Phi_M(u_j))^{i,k_i})\chi_{Q_{\delta_{\e_j}}(i)}(x),
$$
using \eqref{stimaseconda}, we obtain
\begin{equation}\label{stimapproxcapalfa}
\mathcal{F}_{\e_j}^T(w_j^M, E_j)\geq t_j^{p-d}(\beta^{d-p}+o(1))\int_\Omega \Psi_{j,\alpha}^T(x)\, dx+o(1).
\end{equation}
Thanks to Proposition \ref{th:riemannsum} $(ii)$, applied to $(\Phi_M(u_j))$, with $\ R_j^i$ in place of $R_j$, we have 
\begin{equation}\label{convdensalfa}
\Psi_{j,\alpha}^T\to \varphi_{NL,\alpha}^T(\Phi_M(u))\ \mbox{in}\ L^1(\Omega).
\end{equation}
Hence, by \eqref{stimapproxcapalfa},  we deduce that
\begin{equation}\label{liminfcapalfa}
\liminf_{j\to +\infty}\mathcal{F}_{\e_j}^T(w_j^M, E_j)\geq \beta^{d-p}\int_\Omega \varphi_{NL,\alpha}^T(\Phi_M(u)) \, dx,
\end{equation}
which is the analogue  of \eqref{liminfcap} in the previous case.
\smallskip

\noindent By \eqref{stimaspezzata}, \eqref{liminfbulk}, together with \eqref{liminfcap} and \eqref{liminfcapalfa}, and by the arbitrariness of $\eta>0$ and $N\in\NN$, we infer that
$$
\liminf_{j\to +\infty} F_{\e_j,\delta_{\e_j}}^T(u_j)\geq \int_\Omega f_{hom}^T(\nabla \Phi_M(u))\, dx+\beta^{d-p}\int_\Omega \varphi^T(\Phi_M(u)) \, dx,
$$
under the assumption \eqref{zero}, and
$$
\liminf_{j\to +\infty} F_{\e_j,\delta_{\e_j}}^T(u_j)\geq \int_\Omega f_{hom}^T(\nabla \Phi_M(u))\, dx+\beta^{d-p}\int_\Omega \varphi^T_{NL,\alpha}(\Phi_M(u)) \, dx,
$$
if \eqref{zero} is replaced by \eqref{alfa}. Letting $M\to +\infty$, we conclude that, under the assumptions of Theorem \ref{th:main},
$$
\liminf_{j\to +\infty} F_{\e_j,\delta_{\e_j}}^T(u_j)\geq \int_\Omega f_{hom}^T(\nabla u)\, dx+\beta^{d-p}\int_\Omega \varphi^T(u) \, dx,
$$
and, under the assumption of  Theorem \ref{th:mainNL}, 
$$
\liminf_{j\to +\infty} F_{\e_j,\delta_{\e_j}}^T(u_j)\geq \int_\Omega f_{hom}^T(\nabla u)\, dx+\beta^{d-p}\int_\Omega \varphi^T_{NL,\alpha}(u) \, dx.
$$

\medskip

\noindent {\bf  Step 3.} With fixed $T\geq r_0$, we now prove the validity of the $\Gamma$-$\limsup$ inequality for $F_{\e,\delta_\e}^T$.
By a density argument it suffices to prove the inequality for $u\in C_c^\infty (\rr^d;\rr^m)$. For such a $u$, fixed an open set $\Omega'\in \mathcal{A}^{\rm reg}(\rr^d)$ such that $\Omega'\supset\supset\Omega$, and given $\e_j\to 0$ as $j\to +\infty$, by Theorem \ref{convexhom} there exists a sequence $(\tilde u_j)$, converging in $L^p(\Omega'; \rr^m)$ to $u$,   such that
\begin{equation}\label{recovery}
\lim_{j\to +\infty}\mathcal{F}_{\e_j}^T(\tilde u_j, \Omega')=\int_{\Omega'} f_{hom}^T(\nabla u)\, dx.
\end{equation} 
Taking into account Lemma \ref{truncation}, up to replacing $\tilde u_j$ with a suitable truncation, we may also assume that $\sup_j \|\tilde u_j\|_{L^\infty (\Omega'; \rr^m)} <+\infty$.  Thus, given $N\in\NN$, we consider the sequence  $(w_j)$  constructed in Lemma \ref{raccordo} applied with $\Omega'$ in place of $\Omega$, $\rho_j=\delta_{\e_j}/4$ and $\tilde u_j$ in place of $u_j$, so that
\begin{equation}\label{stimasupbulk}
\mathcal{F}_{\e_j}^T(w_j, \Omega')\leq \mathcal{F}_{\e_j}^T(\tilde u_j, \Omega') +\frac CN.
\end{equation}
We now pass to the estimate of the energetic contribution on the set
$$
E_j=\bigcup_{i\in Z_j(\Omega')}B_{\rho_{j,k_i}}(\delta_{\e_j} i).
$$
As previously done, we  distinguish whether  \eqref{zero} or \eqref{alfa} holds.
\smallskip

\noindent {\bf Case }$\boldsymbol{\e=o(r_{\delta_{\e}})}$.   
Set
$$
R_j^i:= \frac{\rho_{j,k_i}}{r_{\delta_{\e_j}}},\quad s_j=\frac{\e_j}{r_{\delta_{\e_j}}},
$$
where $\rho_{j,k_i}$ is defined in Lemma \ref{raccordo}. For $i\in Z_j(\Omega')$, let $\tilde v_{j,i}\in L^p_{s_j,T, u^{i,k_i}_j}(B_{R_j^i};\rr^m)$ be such that
$$
\mathcal {F}_{s_j}^T(\tilde v_{j,i}, B_{R_j^i})=\varphi_{s_j,T, R_j^i}(u^{i,k_i}_j) +o(\e_j).
$$
Then set 
$$
v_{j,i}(x):=\tilde v_{j,i}\left(\frac {x-\delta_{\e_j} i} {r_{\delta_{\e_j}} } \right),\  \ x\in B_{\rho_{j,k_i}}(\delta_{\e_j} i),
$$
and
\begin{equation*} 
u_j(x):=
\left\{ 
\begin{array}{ll}
w_j (x) &\hbox{ if } x\in\Omega' \setminus E_j\\
v_{j,i} (x)&\hbox{ if } x\in B_{\rho_{j,k_i}}(\delta_{\e_j} i).\\
\end{array}
\right.
\end{equation*}
Note that
\begin{equation}\label{energiapalle}
\mathcal{F}_{\e_j}^T(v_{j,i}, B_{\rho_{j,k_i}}(\delta_{\e_j} i))=r^{d-p}_{\delta_{\e_j}}\mathcal {F}_{s_j}^T(\tilde v_{j,i}, B_{R_j^i})=r^{d-p}_{\delta_{\e_j}}(\varphi_{s_j,T, R_j^i}(u^{i,k_i}_j)+o(\e_j)).
\end{equation}
Moreover $u_j\in L^p_{\delta_{\e_j}}(\Omega' ;\rr^m)$ and, arguing as in \textbf{Step 2}, we also deduce that $u_j\to u$ in $L^p(\Omega;\rr^m)$. We finally pass to the estimate of the energy. By \eqref{energiapalle}, we get
\begin{equation}\label{stimasupenergia}
\begin{split}
\mathcal{F}_{\e_j}^T(u_j)&\leq \mathcal{F}_{\e_j}^T(w_j, \Omega')+\sum_{i\in Z_j(\Omega')}\mathcal{F}_{\e_j}^T(u_j, B_{\rho_{j,k_i}}(\delta_{\e_j} i))\\
&=\mathcal{F}_{\e_j}^T(w_j, \Omega' )+\sum_{i\in Z_j(\Omega')}r^{d-p}_{\delta_{\e_j}}(\varphi_{s_j,T, R_j^i}(u^{i,k_i}_j) +o(\e_j))
\end{split}
\end{equation}
Applying Proposition \ref{th:riemannsum} $(i)$ as in {\it Step 2}, we deduce that
\begin{equation}\label{riemannsum}
\sum_{i\in Z_j(\Omega')}r^{d-p}_{\delta_{\e_j}}\varphi_{s_j,T, R_j^i}(u^{i,k_i}_j)\to \beta^{d-p}\int_{\Omega'} \varphi^T(u)\, dx.
\end{equation}
\smallskip 

\noindent {\bf Case }$\boldsymbol{\e=O(r_{\delta_{\e}})}$.   
We now set
$$
R_j^i:= \alpha\, \frac{\rho_{j,k_i}}{\e_j},
$$
where $\rho_{j,k_i}$ is defined in Lemma \ref{raccordo}. By applying Proposition \ref{potcaplimitato} with $M_0=\sup_j \|\tilde u_j\|_{L^\infty (\Omega'; \rr^m)}$, given $\eta >0$ there exist $M>\sup_j \|\tilde u_j\|_{L^\infty (\Omega'; \rr^m)}$ such that for every $i\in Z_j(\Omega')$ there exists $\tilde v_{j,i}\in L^p_{\alpha,T, u^{i,k_i}_j}(B_{R_j^i};\rr^m)$   such that $\|\tilde v_{j,i}\|_{L^\infty(\Omega'; \rr^m)}\leq M$ and 
$$
\mathcal {F}_{\alpha}^T(\tilde v_{j,i}, B_{R_j^i})\leq \varphi_{\alpha, T, R_j^i}(\tilde u^{i,k_i}_j)+  \eta.
$$
Set $t_j=\alpha \frac{r_{\delta_{\e_j}}}{\e_j}$ and note that, by \eqref{alfa}, $t_j\to 1$. Then define
\begin{equation*} 
\hat v_{j,i}(y):=
\left\{ 
\begin{array}{ll}
0 &\hbox{ if } y\in B_{t_j}\\
\tilde v_{j,i}(y)&\hbox{if } y\in B_{R^i_j}\setminus B_{t_j}.\\
\end{array}
\right.
\end{equation*}
Notice that $\hat v_{j,i}$ coincides with $\tilde v_{j,i}$ if $t_j\leq 1$. As in {\it Step 2}, a straightforward computation shows that
$$
\mathcal {F}_{\alpha}^T(\hat v_{j,i}, B_{R_j^i})\leq \mathcal {F}_{\alpha}^T(\tilde v_{j,i}, B_{R_j^i})+C(M)|1-t_j|
$$
We take 
$$
v_{j,i}(x):=\hat v_{j,i}\left(\alpha\, \frac {x-\delta_{\e_j} i} {\e_j} \right) \ x\in B_{\rho_{j,k_i}}(\delta_{\e_j} i),
$$
and 
\begin{equation*} 
u_j(x):=
\left\{ 
\begin{array}{ll}
w_j (x) &\hbox{ if } x\in\Omega'\setminus E_j\\
v_{j,i} (x)&\hbox{ if } x\in B_{\rho_{j,k_i}}(\delta_{\e_j} i).\\
\end{array}
\right.
\end{equation*}
Note that
\begin{equation}\label{energiapallealfa}
\mathcal{F}_{\e_j}^T(v_{j,i}, B_{\rho_{j,k_i}}(\delta_{\e_j} i))=t_j^{p-d}r^{d-p}_{\delta_{\e_j}}\mathcal {F}_{\alpha}^T(\hat v_{j,i}, B_{R_j^i})\leq t_j^{p-d}r^{d-p}_{\delta_{\e_j}}(\varphi_{\alpha,T,R_j^i}(\tilde u^{i,k_i}_j)+C(M)|1-t_j|+\eta).
\end{equation}
We notice that $u_j\in L^p_{\delta_{\e_j}}(\Omega;\rr^m)$ and, as in the previous case,  we deduce that $u_j\to u$ in $L^p(\Omega;\rr^m)$.  By \eqref{energiapallealfa}, we get the counterpart of \eqref{stimasupenergia}
\begin{equation}\label{stimasupenergiaalfa}
\begin{split}
\mathcal{F}_{\e_j}^T(u_j)&\leq \mathcal{F}_{\e_j}^T(w_j, \Omega' )+\sum_{i\in Z_j(\Omega')}\mathcal{F}_{\e_j}^T(u_j, B_{\rho_{j,k_i}}(\delta_{\e_j} i))\\
&\leq \mathcal{F}_{\e_j}^T(w_j,\Omega')+t_j^{p-d}\sum_{i\in Z_j(\Omega')}r^{d-p}_{\delta_{\e_j}}(\varphi_{\alpha,T,R_j^i}(\tilde u^{i,k_i}_j)+C|1-t_j|+\eta).
\end{split}
\end{equation}
Applying Proposition \ref{th:riemannsum} $(ii)$ as in \textbf{Step 2}, we deduce that
\begin{equation}\label{riemannsumalfa}
\sum_{i\in Z_j(\Omega ')}r^{d-p}_{\delta_{\e_j}}\varphi_{\alpha, T,R_j^i}(\tilde u^{i,k_i}_j)\to \beta^{d-p}\int_{\Omega'} \varphi_{NL,\alpha}^T(u)\, dx.
\end{equation}
which corresponds to \eqref{riemannsum}.
\smallskip

\noindent Hence, by \eqref{recovery}, \eqref{stimasupbulk}, together with \eqref{stimasupenergia}, \eqref{riemannsum}, \eqref{stimasupenergiaalfa}, and \eqref{riemannsumalfa},  we get that
$$
\limsup_{j\to +\infty}\mathcal{F}_{\e_j}^T(u_j)\leq \int_{\Omega'}f_{hom}^T(\nabla u)\, dx+\beta^{d-p}\int_{\Omega'} \varphi^T(u)\, dx+\frac CN,
$$
under the assumptions of Theorem \ref{th:main}, and
$$
\limsup_{j\to +\infty}\mathcal{F}_{\e_j}^T(u_j)\leq \int_{\Omega'}f_{hom}^T(\nabla u)\, dx+\beta^{d-p}\int_{\Omega'} \varphi^T_{NL,\alpha}(u)\, dx+\frac CN,
$$
under the assumptions of Theorem \ref{th:mainNL}. The conclusion follows by the arbitrariness of $N\in \NN$ and letting $\Omega' \to \Omega$.

\section{The scaling regime $r_{\delta_\e}=o(\e)$}\label{sopracritico}

Theorems \ref{th:main}, \ref{th:mainNL} and Remark \ref{altrescale}  provide a complete  description of the asymptotic behaviour of $F_{\e,\delta_\e}$
when $\displaystyle \lim_{\e\to 0}\e r_{\delta_\e}^{-1}=\alpha\in [0,+\infty)$. In this section we consider the case when $\e\to 0$ slower than $r_{\delta_\e}$, showing that, for most of the choice of the scaling of $r_{\delta}$ with respect to $\delta$, $F_{\e,\delta_\e}$ is not affected by the constraint $u\in L^p_{\delta_\e}(\Omega;\rr^m)$ and then $\displaystyle\Gamma(L^p)-\lim_{\e\to 0} F_{\e,\delta_\e} (u)=\Gamma(L^p)-\lim_{\e\to 0} \mathcal{F}_\e (u)$. 

Assume from now on
\begin{equation}\label{buchetta}
\lim_{\delta\to 0} \frac{r_\delta}{\delta}=0,
\end{equation}

\begin{equation}\label{altroregime}
\lim_{\e\to 0}\frac{\e}{ r_{\delta_\e}}=+\infty.
\end{equation}

First of all let us note that, since $F_{\e,\delta_\e}(u)\geq \mathcal{F}_\e (u)$, we have that
\begin{equation}\label{gammastimainf}
\Gamma(L^p)-\liminf_{\e\to 0} F_{\e,\delta_\e}(u)\geq\Gamma(L^p)-\liminf_{\e\to 0} \mathcal{F}_\e (u).
\end{equation}
Given $\e_j\to 0$ as $j\to +\infty$, let $u\in C^\infty_c(\rr^d;\rr^m)$ and let $\tilde u_j\to u$ in $L^p(\Omega;\rr^m)$ such that 
$$
\lim_{j\to +\infty}\mathcal{F}_{\e_j} (\tilde u_j)=\int_{\Omega} f_{\hom}(\nabla u)\, dx.
$$
Arguing as in Step 3 of the proof of Theorems \ref{th:main} and \ref{th:mainNL}, we may assume $\tilde u_j$ bounded in $L^\infty(\Omega;\rr^m)$.

Set then
\begin{equation*} 
u_j(x):=
\left\{ 
\begin{array}{ll}
\tilde u_j (x) &\hbox{ if } x\in\Omega\setminus P_{\delta_{\e_j}}\\
0 &\hbox{ if } x\in \Omega\cap P_{\delta_{\e_j}}.\\
\end{array}
\right.
\end{equation*}

Clearly $u_j\in L^p_{\delta_{\e_j}}(\Omega;\rr^m)$ and, by \eqref{buchetta}, $u_j\to u$ in $L^p(\Omega;\rr^m)$ as $j\to +\infty$. By assumption (ii), we have
\begin{equation}\label{serve}
\mathcal{F}_{\e_j}(u_j)\leq \mathcal{F}_{\e_j} (\tilde u_j)+ \frac{\|u_j\|_\infty^p}{\e_j^p}\sum_{i\in\ZZ^d}\int_{\rr^d} M(\xi)|S_{i,j}^\xi|\,d\xi,
\end{equation}
where
$$
S_{i,j}^\xi:=\Omega\cap\Bigl((B_{r_{\delta_{\e_j}}}(\delta_{\e_j} i)\cap ((B_{r_{\delta_{\e_j}}}(\delta_{\e_j} i))^c-\e_j\xi))\cup ((B_{r_{\delta_{\e_j}}}(\delta_{\e_j} i))^c\cap (B_{r_{\delta_{\e_j}}}(\delta_{\e_j} i)-\e_j\xi))\Bigr).
$$
Note that,
since $S_{i,j}^\xi\subseteq B_{r_{\delta_{\e_j}}}(\delta_{\e_j} i)\cup (B_{r_{\delta_{\e_j}}}(\delta_{\e_j} i)-\e_j\xi)$, then
$$
|S_{i,j}^\xi|\leq C r_{\delta_{\e_j}}^{d}.
$$
Thus, by \eqref{serve} and (G1), we get
\begin{equation}\label{supest}
\mathcal{F}_{\e_j}(u_j)\leq \mathcal{F}_{\e_j} (\tilde u_j)+ C \frac{r_{\delta_{\e_j}}^{d}}{\e_j^p}\frac{1}{\delta_{\e_j}^d}.
\end{equation}
By \eqref{gammastimainf},\eqref{supest} and a density argument, we infer that $\displaystyle\Gamma(L^p)-\lim_{\e\to 0} F_{\e,\delta_\e} (u)=\Gamma(L^p)-\lim_{\e\to 0} \mathcal{F}_\e (u)$ under the additional condition
\begin{equation}\label{addcond}
\lim_{\e\to 0 }\frac{r_{\delta_{\e}}^{d}}{\e^p}\frac{1}{\delta_{\e}^d}=0,
\end{equation}
which can be written as
$$
\lim_{\e\to 0 }\frac{\e}{\left(\frac{r_{\delta_\e}}{\delta_\e}\right)^{d/p}}=+\infty.
$$
Note that
$$
\frac{r_{\delta_{\e}}^{d}}{\e^p}\frac{1}{\delta_{\e}^d}=\left(\frac{r_{\delta_{\e}}}{\e}\right)^p\frac{r_{\delta_{\e}}^{d-p}}{\delta_{\e}^d},
$$
which, thanks to \eqref{altroregime}, yields that \eqref{addcond} is satisfied if $r_{\delta}\leq C \delta ^{\frac{d}{d-p}}$. We may then conclude that the following $\Gamma$-convergence result holds.

\begin{theorem}\label{th:sloweps}
Let $F_{\e,\delta}$ be defined by \eqref{functionals2}, with $f$ satisfying assumptions {\rm ({\bf H})}, {\rm ({\bf G})} and {\rm ({\bf L})} and $1<p<d$. Assume moreover that  \eqref{altroregime} and one of the following two assumptions hold
\begin{itemize}
\item[a)] $\displaystyle\limsup_{\delta\to 0}\frac{r_\delta}{\delta^{\frac{d}{d-p}}}<+\infty$

\item[b)] $\displaystyle\lim_{\e\to 0 }\frac{\e}{\left(\frac{r_{\delta_\e}}{\delta_\e}\right)^{d/p}}=+\infty.$

\end{itemize}
Then
\[
\Gamma(L^p)\hbox{-}\lim_{\e\to 0} F_{\e,\delta_\e} (u)=
\begin{cases}\displaystyle \int_\Omega f_{hom}(\nabla u)\, dx & \text{if}\ u\in W^{1,p}(\Omega;\mathbb{R}^m),\cr\
+\infty & \text{otherwise},
\end{cases}
\]
where $f_{hom} (S)$ is defined by \eqref{homform}.
\end{theorem}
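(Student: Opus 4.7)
The plan is to prove the two $\Gamma$-inequalities separately, reducing matters to the unconstrained case in Theorem \ref{convexhom} by choosing a recovery sequence that we force to vanish on the perforations at a negligible energetic cost.

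For the $\Gamma$-$\liminf$ inequality, I would simply observe that $F_{\e,\delta_\e}(u)\ge\mathcal{F}_\e(u)$ pointwise, so
\[
\Gamma(L^p)\hbox{-}\liminf_{\e\to 0}F_{\e,\delta_\e}(u)\ge \Gamma(L^p)\hbox{-}\liminf_{\e\to 0}\mathcal{F}_\e(u)=\int_\Omega f_{hom}(\nabla u)\,dx
\]
on $W^{1,p}(\Omega;\rr^m)$ (and $+\infty$ otherwise), by Theorem \ref{convexhom}.

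For the $\Gamma$-$\limsup$ inequality, by a density argument it suffices to deal with $u\in C^\infty_c(\rr^d;\rr^m)$. Take a recovery sequence $\tilde u_j\to u$ in $L^p(\Omega;\rr^m)$ with $\lim_j \mathcal{F}_{\e_j}(\tilde u_j)=\int_\Omega f_{hom}(\nabla u)\,dx$, and apply Lemma \ref{truncation} (up to extracting a subsequence) to assume $\sup_j\|\tilde u_j\|_{L^\infty(\Omega;\rr^m)}<+\infty$. Define $u_j:=\tilde u_j\,\chi_{\Omega\setminus P_{\delta_{\e_j}}}$, which belongs to $L^p_{\delta_{\e_j}}(\Omega;\rr^m)$. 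Both (a) and (b) force $r_{\delta_\e}/\delta_\e\to 0$, hence $|P_{\delta_{\e_j}}\cap\Omega|\le C\,r_{\delta_{\e_j}}^d/\delta_{\e_j}^d\to 0$; together with the uniform $L^\infty$-bound on $\tilde u_j$, this yields $u_j\to u$ in $L^p(\Omega;\rr^m)$.

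It remains to estimate $\mathcal{F}_{\e_j}(u_j)$. The integrand differs from that of $\tilde u_j$ only on pairs $(x,x+\e_j\xi)$ for which at least one of the two points lies in $P_{\delta_{\e_j}}$; if both do, then $u_j$ vanishes at both points. Hence, using $f\ge 0$ to discard the $\tilde u_j$-contribution, $f(\xi,z)\le M(\xi)|z|^p$, and $|u_j|\le\|\tilde u_j\|_\infty$ on the relevant set,
\[
\mathcal{F}_{\e_j}(u_j)\le \mathcal{F}_{\e_j}(\tilde u_j)+\frac{\|\tilde u_j\|_\infty^p}{\e_j^p}\sum_{i\in\ZZ^d}\int_{\rr^d} M(\xi)|S_{i,j}^\xi|\,d\xi,
\]
with $S_{i,j}^\xi$ as defined in the sketch preceding the theorem. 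Since $|S_{i,j}^\xi|\le C\,r_{\delta_{\e_j}}^d$ and there are $O(\delta_{\e_j}^{-d})$ perforations meeting $\Omega$, assumption (G1) gives the cumulative bound
\[
\mathcal{F}_{\e_j}(u_j)\le \mathcal{F}_{\e_j}(\tilde u_j)+C\,\frac{r_{\delta_{\e_j}}^d}{\e_j^p\,\delta_{\e_j}^d}.
\]

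The crucial final step is to show that the remainder vanishes under (a) or (b). Under (b) it is literally the assumption. Under (a), I would rewrite
\[
\frac{r_{\delta_\e}^d}{\e^p\,\delta_\e^d}=\Big(\frac{r_{\delta_\e}}{\e}\Big)^{p}\cdot\frac{r_{\delta_\e}^{d-p}}{\delta_\e^d},
\]
where the second factor stays bounded by (a) and the first tends to $0$ by \eqref{altroregime}. Combining the two inequalities, $\limsup_j \mathcal{F}_{\e_j}(u_j)\le\int_\Omega f_{hom}(\nabla u)\,dx$, and since $F_{\e_j,\delta_{\e_j}}(u_j)=\mathcal{F}_{\e_j}(u_j)$ on admissible functions, the $\Gamma$-$\limsup$ inequality follows. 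I expect no deep obstacle: the only subtle point is the bookkeeping for $|S_{i,j}^\xi|$, which is handled by the simple inclusion $S_{i,j}^\xi\subseteq B_{r_{\delta_{\e_j}}}(\delta_{\e_j} i)\cup(B_{r_{\delta_{\e_j}}}(\delta_{\e_j} i)-\e_j\xi)$ already isolated in the preamble.
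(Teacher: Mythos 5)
Your proposal is correct and follows essentially the same route as the paper: the $\Gamma$-$\liminf$ inequality is obtained by comparing with the unconstrained functional $\mathcal{F}_\e$, and for the $\Gamma$-$\limsup$ you take a truncated unconstrained recovery sequence, set it to zero on $P_{\delta_{\e_j}}$, and bound the excess energy via $|S_{i,j}^\xi|\le C\,r_{\delta_{\e_j}}^d$ and (G1) by $C\,r_{\delta_{\e_j}}^d/(\e_j^p\delta_{\e_j}^d)$, which vanishes under (a) together with \eqref{altroregime}, or (b). The only (minor) difference is that you derive $r_{\delta_\e}/\delta_\e\to 0$ from (a) or (b) directly, whereas the paper states \eqref{buchetta} as a separate standing assumption before the theorem.
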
 
\medskip

We have summarized our $\Gamma$-convergence results in the following table (see Theorem \ref{th:main}, Theorem \ref{th:mainNL}, Remark \ref{altrescale}, and Theorem \ref{th:sloweps}), schematising how the interplay between the various parameters affects the $\Gamma$-limit  of the non-local functionals $F_{\e,\delta}$ defined in \eqref{functionals2}. The domain  of the $\Gamma$-limit is $W^{1,p}(\Omega;\rr^m)$ if not specified.

\bigskip

\begin{table} [h!]
		\centering
		\small
		{\renewcommand\arraystretch{1.2} 
		\begin{tabular}{| l | c | c | }
			\hline
			\diagbox[innerwidth=2.5cm,innerleftsep=1cm,innerrightsep=1cm]{$\e$}{$\delta$} & $\displaystyle \lim_{\delta\to 0} \frac{r_\delta}{\delta^{\frac {d}{d-p}}}=\beta\ge 0$ & $\displaystyle \lim_{\delta\to 0} \frac{r_\delta}{\delta^{\frac {d}{d-p}}}=+\infty$\\ [0.5ex]
			 \hline
			 \rule[-5mm]{0mm}{1,1cm}
			  $\displaystyle\lim_{\e\to 0}\frac{\e}{r_{\delta_\e}}=0$ & $\displaystyle \int_\Omega f_{hom}(\nabla u)\, dx +\beta^{d-p}\int_\Omega \varphi(u)\, dx$ & 	$0$ iff $u\equiv 0$ \\
			  \hline		 
			\rule[-5mm]{0mm}{1,1cm}
			 $\displaystyle\lim_{\e\to 0}\frac{\e}{r_{\delta_\e}}=\alpha>0$ & $\displaystyle \int_\Omega f_{hom}(\nabla u)\, dx +\beta^{d-p}\int_\Omega \varphi_{NL,\alpha}(u)\, dx$ & 	$0$ iff $u\equiv 0$ \\
			 \hline
			  \rule[-8mm]{0mm}{1,8cm}
			   $\displaystyle\lim_{\e\to 0}\frac{\e}{r_{\delta_\e}}=+\infty$ & $\displaystyle \int_\Omega f_{hom}(\nabla u)\, dx$ & 
			   {\minitab[c]{if $\displaystyle \lim_{\e\to 0}\frac{1}{\e}\left(\frac{r_{\delta_\e}}{\delta_\e}\right)^{\frac{d}{p}}=0$  \\ $\displaystyle \int_\Omega f_{hom}(\nabla u)\, dx$} }\\
			\hline
			\end{tabular}}
			\caption{}
			\end{table}

\begin{ack}

The authors are members of Gruppo Nazionale per l'Analisi Ma\-te\-ma\-ti\-ca, la Probabilit\`a e le loro Applicazioni (GNAMPA) of INdAM.
The research of R. Alicandro has been supported by PRIN project 2022J4FYNJ ``Variational methods for stationary and evolution problems with singularities and interfaces", 
the research of M.S. Gelli and C. Leone by PRIN Project 2022E9CF89 ``Geometric Evolution Problems and Shape Optimizations'', the research of M.S. Gelli by PRIN PNRR Project  P2022WJW9H
``Magnetic skyrmions, skyrmionic bubbles and domain walls for spintronic applications". 
PRIN projects are part of PNRR Italia Domani, financed by European Union through NextGenerationEU. 

Part of this work has been done during the Trimester Program ``Mathematics for Complex Materials" (03/01/2023-14/04/2023)
at the Hausdorff Institute for Mathematics (HIM) in Bonn,
funded by the Deutsche Forschungsgemeinschaft (DFG, German Research Foundation)
under Germany's Excellence Strategy - EXC - 2047/1 - 390685813. The authors are grateful to the organizers 
 for their  kind invitation and the nice working atmosphere provided during the whole staying. 
\end{ack}

%

\end{document}